\begin{document}

\def\Z{\mathbb{Z}}                   
\def\Q{\mathbb{Q}}                   
\def\C{\mathbb{C}}                   
\def\N{\mathbb{N}}                   
\def\dR{{\rm dR}}                    
\def\Gm{\mathbb{G}_m}                 
\def\Ga{\mathbb{G}_a}                 
\def\Tr{{\rm Tr}}                      
\def\tr{{{\mathsf t}{\mathsf r}}}                 
\def\spec{{\rm Spec}}            
\def\ker{{\rm ker}}              
\def\GL{{\rm GL}}                
\def\k{{\sf k}}                     
\def\ring{{ R}}                   
\def\X{{\sf X}}                      
\def\T{{\sf T}}                      
\def\Ts{{\sf S}}
\def\cmv{{\sf M}}                    
\def\BG{{\sf G}}                       
\def\podu{{\sf pd}}                   
\def\ped{{\sf U}}                    
\def\per{{\sf  P}}                   
\def\gm{{\sf  A}}                    
\def\gma{{\sf  B}}                   
\def\ben{{\sf b}}                    

\def\Rav{{\mathfrak M }}                     
\def\Ram{{\mathfrak C}}                     
\def\Rap{{\mathfrak G}}                     

\def\nov{{\sf  n}}                    
\def\mov{{\sf  m}}                    
\def\Yuk{{\sf Y}}                     
\def\Ra{{\sf R}}                      
\def\hn{{ h}}                      
\def\cpe{{\sf C}}                     
\def\g{{\sf g}}                       
\def\t{{\sf t}}                       
\def\pedo{{\sf  \Pi}}                  

\def\Der{{\rm Der}}                   
\def\MMF{{\sf MF}}                    
\def\codim{{\rm codim}}                
\def\dim{{\rm    dim}}                
\def\Lie{{\rm Lie}}                   
\def\gg{{\mathfrak g}}                

\def\u{{\sf u}}                       

\def\imh{{  \Psi}}                 
\def\imc{{  \Phi }}                  
\def\stab{{\rm Stab }}               
\def\Vec{{\rm Vec}}                 
\def\prim{{\rm prim}}                  

\def\Fg{{\sf F}}     
\def\hol{{\rm hol}}  
\def\non{{\rm non}}  
\def\alg{{\rm alg}}  

\def\bcov{{\rm \O_\T}}       

\def\leaves{{\cal L}}        

\def\GM{{\rm GM}}

\def\perr{{\sf q}}        
\def\perdo{{\cal K}}   
\def\sfl{{\mathrm F}} 
\def\sp{{\mathbb S}}  

\newcommand\diff[1]{\frac{d #1}{dz}} 
\def\End{{\rm End}}              

\def\sing{{\rm Sing}}            
\def\cha{{\rm char}}             
\def\Gal{{\rm Gal}}              
\def\jacob{{\rm jacob}}          
\def\tjurina{{\rm tjurina}}      
\newcommand\Pn[1]{\mathbb{P}^{#1}}   
\def\Ff{\mathbb{F}}                  

\def\O{{\cal O}}                     
\def\as{\mathbb{U}}                  
\def\ring{{ R}}                         
\def\R{\mathbb{R}}                   

\def\Mat{{\rm Mat}}              
\def\cl{{\rm cl}}                

\def\hc{{\mathsf H}}                 
\def\Hb{{\cal H}}                    
\def\pese{{\sf P}}                  

\def\PP{\tilde{\cal P}}              
\def\K{{\mathbb K}}                  

\def\M{{\cal M}}
\def\RR{{\cal R}}
\newcommand\Hi[1]{\mathbb{P}^{#1}_\infty}
\def\pt{\mathbb{C}[t]}               
\def\W{{\cal W}}                     
\def\gr{{\rm Gr}}                
\def\Im{{\rm Im}}                
\def\Re{{\rm Re}}                
\def\depth{{\rm depth}}
\newcommand\SL[2]{{\rm SL}(#1, #2)}    
\newcommand\PSL[2]{{\rm PSL}(#1, #2)}  
\def\Resi{{\rm Resi}}              

\def\L{{\cal L}}                     
\def\Aut{{\rm Aut}}              
\def\any{R}                          
\newcommand\ovl[1]{\overline{#1}}    

\newcommand\mf[2]{{M}^{#1}_{#2}}     
\newcommand\mfn[2]{{\tilde M}^{#1}_{#2}}     

\newcommand\bn[2]{\binom{#1}{#2}}    
\def\ja{{\rm j}}                 
\def\Sc{\mathsf{S}}                  
\newcommand\es[1]{g_{#1}}            
\newcommand\V{{\mathsf V}}           
\newcommand\WW{{\mathsf W}}          
\newcommand\Ss{{\cal O}}             
\def\rank{{\rm rank}}                
\def\Dif{{\cal D}}                   
\def\gcd{{\rm gcd}}                  
\def\zedi{{\rm ZD}}                  
\def\BM{{\mathsf H}}                 
\def\plf{{\sf pl}}                             
\def\sgn{{\rm sgn}}                      
\def\diag{{\rm diag}}                   
\def\hodge{{\rm Hodge}}
\def\HF{{\sf F}}                                
\def\WF{{\sf W}}                               
\def\HV{{\sf HV}}                                
\def\pol{{\rm pole}}                               
\def\bafi{{\sf r}}
\def\id{{\rm id}}                               
\def\gms{{\sf M}}                           
\def\Iso{{\rm Iso}}                           

\def\hl{{\rm L}}    
\def\imF{{\rm F}}
\def\imG{{\rm G}}
\def\cy{{CY }}
\def\rc{{Rankin-Cohen }}
\def\rs{{Ramanujan-Serre type derivation }}
\def\DHR{{\rm DHR }}            
\def\H{{\sf{ E}}}
\def\P {\mathbb{P}}                  
\def\E{{\cal E}}
\def\L{{\sf L}}             
\def\CX{{\cal X}}
\def\dt{{\sf d}}             
\def\LG{{\sf G}}   
\def\LA{{\rm Lie}(\LG)}   
\def\amsy{\mathfrak{G}}  
\def\gG{{\sf g}}   
\def\gL{\mathfrak{g}}   
\def\rvf{{\sf H}}   
\def\cvf{{\sf F}}   
\def\di2{{ m}}   
\def\a{\mathfrak{a}}
\def\b{\mathfrak{b}}

\def\mmat{e}
\def\cmat{f}
\def\rmat{h}

\def\hn{{\sf h}}                      

\def\tgtwo{\mathfrak{t}}
\def\tgone{\mathfrak{t}}
\def\tgzero{\mathfrak{t}}
\def\kgone{\mathfrak{k}}
\def\ggtwo{\mathfrak{g}}
\def\ggzero{\mathfrak{g}}

\def\Yukk{{\sf C}^\alg}

\def\sl2{\mathfrak{sl}_2(\C)}
\def\SL2{{\rm SL}_2(\Z)}
\def\qmfs{\widetilde{\mathscr{M}}}             
\def\mfs{{\mathscr{M}}}                            
\def\cyqmfs{{ \widetilde{\mathcal{M}}}}             
\def\cymfs{{\mathcal{M}}}                            
\def\Ramvf{{\sf Ra}}             
\def\rcv{{\sf D}}             
\def\rcdo{{\mathcal{D}}}             
\def\rcdomf{{\mathscr{D}}}
\def\mdo{{\mathcal{R}}}             
\def\rdo{{\mathcal{H}}}             
\def\cdo{{\mathcal{F}}}             
\def\rsdo{{\partial}}             
\newtheorem{theo}{Theorem}[section]
\newtheorem{exam}{Example}[section]
\newtheorem{coro}{Corollary}[section]
\newtheorem{defi}{Definition}[section]
\newtheorem{prob}{Problem}[section]
\newtheorem{lemm}{Lemma}[section]
\newtheorem{prop}{Proposition}[section]
\newtheorem{rem}{Remark}[section]
\newtheorem{obs}{Observation}[section]
\newtheorem{conj}{Conjecture}
\newtheorem{nota}{Notation}[section]
\newtheorem{ass}{Assumption}[section]
\newtheorem{calc}{}
\numberwithin{equation}{section}

\begin{center}
{\LARGE\bf Rankin-Cohen brackets for Calabi-Yau modular forms}
\footnote{ MSC2010:
14J15,     
11F11,   	
14J32,     
16E45,  	   
13N15.  	   
\\
Keywords: Rankin-Cohen bracket, modular vector fields, Calabi-Yau modular forms, modular forms,  Dwork family. }
\\

\vspace{.25in} {\large {\sc
Younes Nikdelan \footnote{Departamento de An\'alise Matem\'atica, Instituto de Matem\'atica
e Estat\'{i}stica (IME), Universidade do Estado do
Rio de Janeiro (UERJ), Rua S\~{a}o Francisco Xavier, 524, Rio de Janeiro, Brazil / CEP: 20550-900. e-mail: younes.nikdelan@ime.uerj.br}}} \\
\end{center}
\vspace{.25in}
\begin{abstract}
For any positive integer $n$, we introduce a quasi-homogeneous vector field $\textsf{D}$ of degree $2$
on a moduli space $\textsf{T}$ of enhanced Calabi-Yau $n$-folds arising from the Dwork family.
By Calabi-Yau quasi-modular forms for Dwork family
we mean the elements of the graded $\mathbb{C}$-algebra $\widetilde{\mathcal{M}}$ generated by the components of
a particular solution of $\textsf{D}$, which are provided with natural weight. Using
$\textsf{D}$ we introduce the derivation $\mathcal{D}$ and the Ramanujan-Serre type
derivation $\partial$ on $\widetilde{\mathcal{M}}$. We show that they are
degree $2$ differential operators and there exists a proper subspace $\mathcal{M}\subset \widetilde{\mathcal{M}}$,
called the space of Calabi-Yau modular forms, which is closed under $\partial$. Using the
derivation $\mathcal{D}$, we define the Rankin-Cohen brackets for Calabi-Yau quasi-modular
forms and prove that the subspace generated by the positive weight elements of
$\mathcal{M}$ is closed under the Rankin-Cohen brackets.
\end{abstract}


\section{Introduction}\label{section introduction}
The proof of Fermat's last theorem led to the celebrated modularity theorem, which states that
elliptic curves over the field of rational numbers $\Q$ are related with modular forms. Elliptic curves are $1$-dimensional Calabi-Yau (\cy) varieties, which makes it natural to ask whether a similar statement of modularity holds for higher dimensional \cy varieties. This question persuaded mathematicians and theoretical physicists to the subject of \emph{ modularity of \cy manifolds} which is one of the considerable present challenges of the modern algebraic number theory. Some relevant results can be found, for instance, in \cite{ny13} and the references therein. Yui in \cite{ny13} divides  the modularity of \cy varieties in arithmetic modularity and geometric modularity including  (1) the modularity (automorphy) of Galois representations of \cy varieties (or motives) defined over $\Q$ or number fields, (2) the modularity of solutions of Picard-Fuchs differential equations of families of \cy varieties, and mirror maps (mirror moonshine), (3) the modularity of generating functions of invariants counting certain quantities on \cy varieties, and (4) the modularity of moduli for families of \cy varieties. But so far, in a general context, even there is no unified formulation or statement of the modularity of \cy varieties. Yamaguchi and Yau \cite{yaya} in 2004 showed that the partition functions for the mirror quintic can be expressed in terms of finitely many generators of a differential ring, which somehow play the role of quasi-modular forms; then Alim and Lange \cite{alla} in 2007 generalized their results for arbitrary CY $3$-folds.
Movasati in \cite{GMCD-MQCY3} says: "All the attempts to find an arithmetic modularity for mirror quintic have failed, and this might be an indication that maybe such varieties need a new kind of modular forms." In this way, he introduced \cy (quasi-)modular forms which somehow can be considered as a modern generalization of the classical quasi-modular forms (automorphic forms) theory. The present paper provides some evidences in favor of this generalization; namely, we introduce the space of \cy quasi-modular forms $\cyqmfs$ for the Dwork family and furnish it with a Rankin-Cohen algebra structure. Then we find a proper subspace of $\cyqmfs$ which is closed under the Rankin-Cohen brackets. This can be considered as a generalization of the work of Zagier \cite{zag94} for the space of classical (quasi-)modular forms.

Movasati in \cite{ho14} used an algebraic method, called \emph{Gauss-Manin connection
in disguise} (GMCD), in a geometric framework and reencountered the Ramanujan \cite{ra16} vector field  (system) $\Ramvf$ (see \eqref{eq ramanujan}) on certain moduli of a family of enhanced elliptic curves (see \eqref{eq gmcd of Ra} and \eqref{eq ufec}).
It is known that the triple of Eisenstein series $(E_2,E_4,E_6)$
gives a solution of the Ramanujan system $\Ramvf$, and the space of modular forms $\mathscr{M}$ and quasi-modular forms $\widetilde{\mathscr{M}}$ for $\SL2$ are respectively graded $\C$-algebras $\mathscr{M}=\C[E_4,E_6]$ and $\widetilde{\mathscr{M}}=\C[E_2,E_4,E_6]$.
Note that $E_4,\ E_6$ are modular forms of weight $4$ and $6$, respectively, and $E_2$ is a quasi-modular form of weight $2$ which is not modular. After this work, in the paper \cite{ho22} he applied GMCD to the family of mirror quintic $3$-fold and a few years later
expanded it to the book \cite{GMCD-MQCY3}, where he introduced CY modular forms for the mirror quintic $3$-fold. In particular, he reencountered the so-called \emph{Yukawa coupling} of Candelas et al. \cite{can91} and expressed it in terms of CY modular forms, and also by considering \cite{bcov93} and \cite{yaya}, he wrote the  topological string partition functions for the mirror quintic $3$-fold in terms of CY modular forms. The mirror quintic $3$-fold is the particular case $n=3$ of families of mirror $n$-folds, $n\in \Z_{>0}$, arising from the so-called Dwork family (see \cite{grmorpl}). The author and  Movasati in \cite{movnik}  applied GMCD to the families of the mirror $n$-folds arising from the Dwork family, for all positive integers $n$, which briefly is as follows.
We considered the moduli space $\T=\T_n$ of the pairs $(X,[\alpha_1,\alpha_2,\ldots,\alpha_n,\alpha_{n+1}])$, where
$X$ is a mirror $n$-fold arising from the Dwork
family and $\{\alpha_1,\alpha_2,\ldots,\alpha_{n+1}\}$
refers to a basis of the $n$-th algebraic de Rham cohomology $H_\dR^n(X)$ which is compatible with the
Hodge filtration of $H_\dR^n(X)$ (see \eqref{eq bcwhf}) and its intersection form matrix is constant (see \eqref{11jan2016}). We showed that there exist a unique vector field $\Ra=\Ra_n$, called \emph{modular vector field}, and regular functions $\Yuk_i, \ 1\leq i \leq n-2$, that satisfy certain equation involving the Gauss-Manin connection of the universal family of $\T$ (see Theorem \ref{main3} and also
\cite[Theorem 1.1]{younes1} in a more general context). Due to \cite{ho14} we can say that the modular vector field $\Ra$ is a generalization of the Ramanujan vector field $\Ramvf$. For $n=1,2,3,4$ we found the $q$-expansion of solution components of the modular vector field $\Ra$ whose coefficients are  surprisingly integers. Actually, for $n=1,2$, where $\T$ is the moduli of enhanced elliptic curves and K3-surfaces, respectively, the solution components, as it was expected, are quasi-modular forms (see \eqref{lovely-1} and \eqref{eq solution R01}). See also \cite{ali17} for similar computations. In the case $n=3$, $\Ra_3$ is explicitly computed in \cite{ho22} and it is
verified that $\Yuk_1$ is the Yukawa coupling introduced in
\cite{can91}, which predicts the numbers of rational curves of various degrees on a general quintic three-fold.
For $n=4$, we computed the modular vector field $\Ra_4$ explicitly in \cite{movnik} and we observed that
$\Yuk_1^2=\Yuk_2^2$ is the same as $4$-point function presented in \cite[Table~1, $d=4$]{grmorpl}, and we computed the modular coordinate $z$ given in \cite[\S 6.1]{KlemmPandharipande2008} in terms of solution components of $\Ra_4$.
Unlike the cases $n=1,2$, for $n=3,4$ we believe that it is not possible to write the solution components of $\Ra$ in terms of classical quasi-modular forms, since the coefficients of their $q$-expansions increase very rapidly. This leads us to think to another theory which generalizes the theory of quasi-modular forms, where the space generated by solution components of $\Ra$ is the adequate  candidate of the desired generalization.

One of the initial steps in the above-mentioned generalization is the correct assignment of weights  to the components of a solution of $\Ra$. In order to do this, we recall another important property of the Ramanujan vector field $\Ramvf$.
We can easily observe that the Lie algebra generated by the Ramanujan vector field $\Ramvf=\frac{1}{12}(t_1^2-t_2)\frac{\partial}{\partial t_1}+\frac{1}{3}(t_1t_2-t_3)\frac{\partial}{\partial t_2}+\frac{1}{2}(t_1t_3-t_2^2)\frac{\partial}{\partial t_3}$, the radial vector field $H=2t_1\frac{\partial}{\partial t_1}+4t_2\frac{\partial}{\partial t_2}+6t_3\frac{\partial}{\partial t_3}$ and the constant vector field $F=-12\frac{\partial}{\partial t_1}$ is isomorphic to the Lie algebra $\sl2$ (remember that $(t_1,t_2,t_3)=(E_2,E_4,E_6)$ is a solution of $\Ramvf$). Note that $\deg(E_2)=2,\ \deg(E_4)=4, \ \deg(E_6)=6$ and these integers appear as coefficients of the components of the vector field $H$. Moreover, we have $F(t_1)=-12, \ F(t_2)=F(t_3)=0$; indeed, if we consider $F$ as a derivation on $\qmfs$, then  $\ker F= \mfs $.
Our attention in \cite{younes2} was dedicated to the Lie algebra $\sl2$ and we proved that for any $n$, there are vector fields $\rvf$ and $\cvf$ on $\T=\T_n$ such that along with the modular vector field $\Ra$ generate a copy of $\sl2$ in $\mathfrak{X}(\T)$ (see Theorem \ref{theo 3})(the notations $\rvf$ and $\cvf$ in the whole manuscript are used for the same vector fields given in Theorem \ref{theo 3}). Furthermore,  we observe that the vector field $\rvf$ can be written in the form $\rvf=\sum_{j=1}^{\dt}w_jt_j\frac{\partial }{\partial t_j}$, where $\dt=\dim \T$, $(t_1,t_2,\ldots,t_\dt)$ is a chart of $\T$, which will be constructed in Subsection \ref{subsection EMS}, and $w_j\in \Z_{\geq 0}, \ j=1,2,\ldots,\dt$ (see \eqref{eq rvf gf}).
These facts lead us to define $\deg(t_j):=w_j,\ j=1,2,\ldots,\dt$. By applying these weights, in Proposition \ref{prop Risqh2} we show that for any positive integer $n$ the modular vector field $\Ra=\Ra_n$ is a quasi-homogeneous vector field of degree $2$.
If $n=1$ or $n$ is even, then $\cvf(t_j)=0$, for all $j\neq 2$, and $\cvf(t_2)\neq0$. But, if $n\geq 3$ is odd, then we observe that  $\cvf(t_j)=0$, for all $j\neq 2,\dt$, and $\cvf(t_2)\neq 0$, $\cvf(t_\dt)\neq 0$ which will cause problems for our purposes in Section \ref{section RCACY}. To avoid these problems, we introduce another vector field $\rcv$ (see Subsection \ref{subsection rcv}), which coincides with $\Ra$ when $n=1$ or $n$ is even, but for odd $n\geq 3$ it can be different from $\Ra$. We prove that $\rcv$ along with $\rvf$ and the constant vector field $(1+\delta_2^n)\frac{\partial}{\partial t_2}$ forms a copy of $\sl2$, where $\delta_2^n$ is the Kronecker delta, and $\rcv$ is a quasi-homogeneous vector field of degree $2$ in $\T$ (see Lemma \ref{lemm fund} and Corollary \ref{cor sl2c D}).
Now, suppose that $\t_j, \ j=1,2,\ldots,\dt$, is the component of a particular solution of $\rcv$ associated with the coordinate chart $t_j$ carrying the same weight, i.e., $\deg(\t_j)=w_j$. We define the space of \emph{\cy quasi-modular forms} for Dwork family as $\cyqmfs :=\C[\t_1,\t_2,\t_3,\ldots,\t_{\dt},\frac{1}{\t_{n+2}(\t_{n+2}-\t_1^{n+2})\check{\t}}]$ and the space of \emph{\cy modular forms} for Dwork family as $\cymfs :=\C[\t_1,\widehat{\ \t_2},\t_3,\t_4,\ldots,\t_{\dt},\frac{1}{\t_{n+2}(\t_{n+2}-\t_1^{n+2})\check{\t}}]$, where $\check{\t}$ is a product of a few number of  $\t_j$'s (see \eqref{eq rfs}) and the symbol $\widehat{\ \t_2}$ means that the component $\t_2$ is omitted, i.e.,  $\t_2\notin \cymfs$; indeed $\cymfs$ is a subspace of $\cyqmfs$, and $\cyqmfs=\cymfs[\t_2]$. For any $n$, Remark \ref{rem weights} yields that $\deg (\t_2)=2$.  In our approach $\t_2$ plays the same role of the quasi-modular form $E_2$ in the theory of quasi-modular forms for $\SL2$, which gives sense to the definition of $\cymfs$ (recall that $\qmfs=\mfs[E_2]$).
Throughout by \cy quasi-modular forms or \cy modular forms we mean \cy quasi-modular forms or \cy modular forms for the Dwork family.

To motivate and explain better our main results, we recall again some known facts of classical theory of quasi-modular forms. It is well known that the derivative of a modular form is not necessarily a modular form. More precisely, for any positive integer $r$ and any modular form $f\in \mathscr{M}_r$ of weight $r$ for $\SL2$, we know that $f'\in \widetilde{\mathscr{M}}_{r+2}$ is a quasi-modular form of weight $r+2$ which is not necessarily modular.
But the derivative $f'$ can be corrected using the Ramanujan-Serre derivation $\partial f=f'-\frac{1}{12}rE_2f$ which yields $\partial f \in \mathscr{M}_{r+2}$ (see \eqref{eq qmf der} and \eqref{eq rsd2}). Rankin in \cite{ran56} described some necessary conditions under which a polynomial in a given modular form and its derivatives is again a modular form. Cohen \cite{coh77} generalized the result of Rankin and for any non-negative integer $k$, defined a bilinear operator $F_k(\cdot,\cdot)$ and proved that for all $f\in\mathscr{M}_r,\ g\in\mathscr{M}_s$ one gets $F_k(f,g)\in \mathscr{M}_{r+s+2k}$. Later, Zagier in \cite{zag94} called these bilinear forms as \emph{Rankin-Cohen brackets} and denoted them by $[\cdot,\cdot]_k$ (see \eqref{eq rcb}). Furthermore, he developed the theory of  \emph{Rankin-Cohen algebras}, which are briefly described  in Section \ref{section RCa}. The principal objective of this paper is to endow $\cyqmfs$ and $\cymfs$ with standard Rankin-Cohen and canonical Rankin-Cohen algebra structure, respectively. In order to do this we will need a degree $2$ differential operator and a Ramanujan-Serre type derivation on $\cyqmfs$ and $\cymfs$, respectively.
To this end, we observe that $\rcv$ induces a differential operator on $\cyqmfs$ which is denoted by $\rcdo$ (see \eqref{eq rcdo}). It is not difficult to observe that the space of \cy modular forms $\cymfs$ is not closed under $\rcdo$, but by
correcting the derivation $\rcdo$ we can define the \emph{Ramanujan-Serre type derivation}
$\rsdo$ (see \eqref{eq rsdo}). In the following theorem we state the first main result of this work.

\begin{theo} \label{theo 4}
Let $\rcdo$ and $\rsdo$ be the derivations defined in \eqref{eq rcdo} and \eqref{eq rsdo}, respectively. Then the following hold.
\begin{enumerate}
  \item The derivation $\rcdo$ is a degree $2$ differential operator on $\cyqmfs$.
  \item The \rs $\rsdo$ is a degree $2$ differential operator on $\cymfs$.
\end{enumerate}
\end{theo}

We emphasize that, due to Theorem \ref{theo 4},  $\cymfs$ is closed under $\rsdo$, and in particular for all integers $r$ we have $\rsdo:\cymfs_r\to\cymfs_{r+2}$.
Using the derivation $\rcdo$, for any non-negative integers $k,s,r$ and any $f\in \cyqmfs_r,  g\in \cyqmfs_s$, we define the $k$-th Rankin-Cohen bracket $[f,g]_{\rcdo,k}$
of \cy quasi-modular forms in \eqref{eq rcb r1} and observe that $[f,g]_{\rcdo,k}\in \cyqmfs_{r+s+2}$. Indeed, $[\cdot,\cdot]_{\rcdo,k}$ provides $\cyqmfs$ with a standard Rankin-Cohen algebra structure.
Finally, in the following theorem we establish the second main, and more important, result of the present paper.

\begin{theo} \label{theo 5}
For all non-negative integers $r,s,k$ and for any $f\in \cymfs_r$, $g\in \cymfs_s$  we have:
\[
[f,g]_{\rcdo,k}\in \cymfs_{r+s+2k}\,.
\]
\end{theo}

In the other words, Theorem \ref{theo 5} says that the space of \cy modular forms of positive weight is closed under
the Rankin-Cohen brackets of the \cy quasi-modular forms, and hence we provide this space with a canonical Rankin-Cohen algebra structure. We prove  Theorem \ref{theo 4}
and Theorem \ref{theo 5} in Section \ref{section RCACY}.
It is worth to mention that for various  examples
of \cy modular forms of negative weight we used the computer and observed that their Rankin-Cohen brackets are again \cy modular forms. Thus, we conjecture
that the whole space of the \cy modular forms $\cymfs$ is closed under the Rankin-Cohen brackets.

This manuscript is organized as follows. In Section \ref{section RCa} we briefly review the relevant definitions and facts of \cite{zag94}
which will be used in the rest of the text. Section \ref{section MSGMCD} starts with a short summary of \cite{movnik} and \cite{younes2} in Subsections \ref{subsection EMS} and \ref{subsection amsy} which constructs the foundation
of the present research and also lets us to have a self contained manuscript. After that, in Subsection \ref{subsection R as qhvf} we prove that the modular vector field $\Ra$ is a quasi-homogeneous vector field
of degree $2$. We introduce the vector field $\rcv$ in Section \ref{subsection rcv} and demonstrate the fundamental lemma.
In Section \ref{section RCACY} our main results are stated and proved. Namely, we define the concepts of: spaces of \cy quasi-modular forms and \cy modular forms, derivation
$\rcdo$, \rs $\rsdo$ and Rankin-Cohen brackets of the \cy quasi-modular forms. We provide the proofs of Theorem \ref{theo 4} and Theorem \ref{theo 5} in this section. In various examples
of the same section, for $n=1,2,3,4$,  the derivations $\rcdo,\, \rsdo$ and Rankin-Cohen brackets of a few \cy modular forms are explicitly calculated. Section \ref{section FR}
deals with the final remarks. In this section we state a conjecture which improves our results. \\

{\bf Acknowledgment.} The initial inspiration of the present study came from a conversation between Hossein Movasati and Don Zagier, which was shared later with the author and others by Movasati. At that moment we did not succeed in solving the problem, because of the absence of some key points such as the correct weight of the \cy (quasi-)modular forms and etc. After the work \cite{younes2}, the author could find the missing points of the research and completed the present work. Because of this, the author would like to thank both Movasati and Zagier, in particular he is very grateful to Movasati for his helpful discussions and comments.

\section{Rankin-Cohen algebra} \label{section RCa}
In this section we recall the important facts and terminologies of \cite{zag94} which are necessary for the present paper.
Let $\widetilde{\mathscr{M}}=\bigoplus_{r\geq 0}\qmfs_r$ and $\mathscr{M}=\bigoplus_{r\geq 0}\mathscr{M}_r$, respectively, be the graded algebras of quasi-modular forms and of modular forms, where $\qmfs_r:=\qmfs_r(\SL2)$ and $\mathscr{M}_r:=\mathscr{M}_r(\SL2)$, respectively, are the spaces of quasi-modular forms and of modular forms of weight $r$ for $\SL2$. It is well known  that $\widetilde{\mathscr{M}}=\C[E_2,E_4,E_6]$ and  $\mathscr{M}=\C[E_4,E_6]$, where  $E_2, E_4, E_6$ are Eisenstein series given as:
\begin{align}
&E_{2j}(q)=1+b_j\sum_{k=1}^\infty \sigma_{2j-1}(k) q^{k}\,\, \textrm{with}\,\,
\sigma_i(k)=\sum_{d\mid k}d^i, \ (b_1,b_2,b_3)=(-24,240,-504)\,.\label{eq eis ser}
\end{align}
Note that $E_4$ and $E_6$ are modular forms of weight 4 and 6, respectively, while $E_2$ is a quasi-modular form of weight 2 which is not modular. The triple $(E_2,E_4,E_6)$ satisfies the system of ordinary differential equations
\begin{equation} \label{eq ramanujan}
 {\Ramvf}:\left \{ \begin{array}{l}
t_1'=\frac{1}{12}(t_1^2-t_2) \\\\
t_2'=\frac{1}{3}(t_1t_2-t_3) \\\\
t_3'=\frac{1}{2}(t_1t_3-t_2^2)
\end{array} \right.,
\end{equation}
which is known as the \emph{Ramanujan relations between Eisenstein series}, and from now on we call it the \emph{Ramanujan vector field}.
Note that here $t_j'=q\frac{\partial t_j}{\partial q}=\frac{1}{2\pi i}\frac{d t_j}{d \tau}$ where $q=e^{2\pi i\tau}$ and $\tau\in \mathbb{H}:=\{z\in \C \ | \ {\rm Im}(z)>0\}.$
The Ramanujan vector field $\Ramvf=t'_1\frac{\partial }{\partial t_1}+t'_2\frac{\partial }{\partial t_2}+t'_3\frac{\partial }{\partial t_3}$ together with two vector fields $H=2t_1\frac{\partial }{\partial t_1}+4t_2\frac{\partial }{\partial t_2}+6t_3\frac{\partial }{\partial t_3}$ and
$F=-12\frac{\partial }{\partial t_1}$
forms a copy of $\sl2$; this follows from the fact that $[\Ramvf,F]=H \ , \ \ [H,\Ramvf]=2\Ramvf \ , \ \ [H,\cvf]=-2F$, where $[ \ ,\ ]$ refers to the Lie bracket of vector fields. We know that if $f\in \mathscr{M}_r$ is a modular form of weight $r$, then $f'$ is not necessarily a modular form. If instead of the usual derivation, we use the so-called \emph{Ramanujan-Serre derivation} $\partial$ given by
\begin{equation}\label{eq rsd}
  \rsdo f=f'-\frac{1}{12}rE_2f,
\end{equation}
then $\partial f$ is a modular form of weight $r+2$. After substituting  $(t_1,t_2,t_3)$  by $(E_2,E_4,E_6)$ in the Ramanujan vector field \eqref{eq ramanujan}, for any non-negative integer $r$ and any $f\in \qmfs_r$, we get $f'=\rcdomf f$ where the differential operator $\rcdomf$ on $\widetilde{\mathscr{M}}=\C[E_2,E_4,E_6]$ is given as follows:
\begin{equation}\label{eq qmf der}
  \rcdomf:\widetilde{\mathscr{M}}_r\to \widetilde{\mathscr{M}}_{r+2} \, ; \quad f'=\rcdomf f=\frac{E_2^2-E_4}{12}\frac{\partial f}{\partial E_2}+\frac{E_2E_4-E_6}{3}\frac{\partial f }{\partial E_4}+\frac{E_2E_6-E_4^2}{2}\frac{\partial f}{\partial E_6}\, ,
\end{equation}
which is a degree $2$ differential operator. Therefore, for any $f\in \mathscr{M}_r$ since $\frac{\partial f}{\partial E_2}=0$, we can express the Ramanujan-Serre derivation \eqref{eq rsd} as follows:
 \begin{equation}\label{eq rsd2}
  \rsdo f=-\frac{E_6}{3}\frac{\partial f}{\partial E_4}-\frac{E_4^2}{2}\frac{\partial f}{\partial E_6}\ ,
\end{equation}
from which we get that the Ramanujan-Serre derivation $\partial$ kills the terms which include $E_2$.
 Zagier \cite{zag94} in 1994, based on the works of Rankin \cite{ran56} and Cohen \cite{coh77}, for any non-negative integer $k$ introduced the $k$-th Rankin-Cohen bracket $[\cdot,\cdot]_k$ defined as follows:
\begin{equation}\label{eq rcb}
  [f,g]_k:=\sum_{i+j=k}(-1)^j\binom{k+r-1}{i}\binom{k+s-1}{j}f^{(j)}g^{(i)} \, , \ \ f\in \mathscr{M}_r  \ and \ g\in \mathscr{M}_s,
\end{equation}
where $f^{(j)}$ and $g^{(j)}$ refer to the $j$-th derivative of $f$ and $g$ with respect to the derivation given in \eqref{eq qmf der}. It was proven by Cohen that $[f,g]_k\in \mathscr{M}_{r+s+2k}$. Note that the $0$-th bracket is considered as usual multiplication, i.e. $[f,g]_0=fg$. We list some  algebraic properties of the Rankin-Cohen brackets given in \cite{zag94} below, in which we assume $f\in \mathscr{M}_r  ,   \ g\in \mathscr{M}_s$ and $h\in \mathscr{M}_l$:
\begin{align}
  &[f,g]_k =(-1)^k[g,f]_k\,, \ \ \forall \, k\geq 0\,,  \label{eq ap1}\\
  &[[f,g]_0,h]_0 =[f,[g,h]_0]_0\,,   \label{eq ap2}\\
  &[f,1]_0 =[1,f]_0=f\,, \ \  [f,1]_k =[1,f]_k=0\,, \ \forall \,k>0\,, \label{eq ap3}\\
  &[[f,g]_1,h]_1+[[g,h]_1,f]_1+[[h,f]_1,g]_1=0\,, \label{eq ap4}\\
  &[[f,g]_0,h]_1+[[g,h]_0,f]_1+[[h,f]_0,g]_1=0\,, \label{eq ap5}\\
  &l[[f,g]_1,h]_0+s[[g,h]_1,f]_0+r[[h,f]_1,g]_0=0\,, \label{eq ap6}\\
  &[[f,g]_0,h]_1=[[g,h]_1,f]_0-[[h,f]_1,g]_1\,, \label{eq ap7}\\
  &(r+s+l)[[f,g]_1,h]_0=r[[g,h]_0,f]_1-s[[h,f]_0,g]_1\,, \label{eq ap8}\\
  &(r+1)(s+1)[[f,g]_0,h]_2=-l(l+1)[[f,g]_2,h]_0  \label{eq ap9}\\
  &\qquad\ +(r+1)(r+s+1)[[g,h]_2,f]_0+(s+1)(r+s+1)[[h,f]_2,g]_0  \nonumber\\
  &(r+s+l+1)(r+s+l+2)[[f,g]_2,h]_0=(r+1)(s+1)[[f,g]_0,h]_2  \label{eq ap10}\\
  &\qquad\ -(r+1)(r+s+1)[[g,h]_0,f]_2-(s+1)(r+s+1)[[h,f]_0,g]_2  \nonumber\\
  &[[f,g]_1,h]_1=[[g,h]_0,f]_2-[[h,f]_0,g]_2+[[g,h]_2,f]_0-[[h,f]_2,g]_0\,. \label{eq ap11}
\end{align}

Zagier defined a Rankin-Cohen algebra over a field $\k$ (of characteristic zero) as a
graded $\k$-vector space $M =\bigoplus_{r\geq 0}M_r$, with $M_0=\k.1$ and $\dim_\k M_r$ finite for all $r$,
together with bilinear operations $[ \ , \ ]_k:M_r\otimes M_s\to M_{r+s+2k},\ r, s, k\geq 0$, which satisfy
\eqref{eq ap1}-\eqref{eq ap11} and all the other algebraic identities satisfied by the Rankin-Cohen brackets given in \eqref{eq rcb}. A basic example of Rankin-Cohen algebras can be constructed as follows, and for future uses we state it as a remark.
\begin{rem} \label{rem srca}
Let $M$ be a commutative and associative graded algebra with unit over the field $\k$ together with a derivation $D$ of degree 2, i.e. $D: M_r \to M_{r+2}$ for all integers $r\geq 0$.
Given $f\in M_r$ and $g\in M_s$, for any non-negative integer $k$ define the Rankin-Cohen bracket $[f,g]_{D,k}$ as follows:
\begin{equation}\label{eq srcb}
  [f,g]_{D,k}=\sum_{i+j=k}(-1)^j\binom{k+r-1}{i}\binom{k+s-1}{j}f^{(j)}g^{(i)}\in M_{r+s+2k},
\end{equation}
where $f^{(j)}=D^{j} f$ and $g^{(j)}=D^{j} g$ are the $j$-th derivative of $f$ and $g$ with respect to the derivation $D$. Then $(M,[\cdot,\cdot]_{D,\ast})$ is a Rankin-Cohen algebra which is called the \emph{standard Rankin-Cohen algebra}.
\end{rem}

For example $(\qmfs,[\cdot,\cdot]_{\rcdomf,\ast})=(\qmfs,[\cdot,\cdot]_{\ast})$, where $\rcdomf$ is given in \eqref{eq qmf der}, is a standard Rankin-Cohen algebras. Hence, $(\mfs,[\cdot,\cdot]_{\rcdomf,\ast})$ is a sub Rankin-Cohen algebra of $(\qmfs,[\cdot,\cdot]_{\rcdomf,\ast})$, but it is not a standard Rankin-Cohen algebras, since $\mfs$ is not closed under $\rcdomf$. We can relate $(\mfs,[\cdot,\cdot]_{\ast})$ with another bilinear form which is defined using the Ramanujan-Serre derivation $\rsdo$. This fact, in a more general version, is given in the following proposition, and since a part of its proof will be needed, we summarize the proof and for more details the reader is referred to the given Ref.

\begin{prop}\label{prop crca} {\rm (\cite[Proposition~1]{zag94})}
Let $M$ be a commutative and associative graded $\k$-algebra with $M_0=\k\cdot 1$ together with a derivation $\rsdo$ of degree 2 on $M$, and let $\Lambda\in M_4$. For any $k\geq 0$ define brackets $[\cdot,\cdot]_{\partial,\Lambda,k}$ by
\begin{equation}\label{eq crcb}
[f,g]_{\partial,\Lambda,k}=\sum_{i+j=k}(-1)^j\binom{k+r-1}{i}\binom{k+s-1}{j}f_{(j)}g_{(i)}  \in M_{r+s+2k} \ ,
\end{equation}
where $f\in M_r, \ g\in M_s,$ and $f_{(j)}\in M_{r+2j}, \ g_{(i)}\in M_{s+2i}$ are defined recursively as follows
\begin{equation}\label{eq f_(i)}
  f_{(j+1)}=\partial f_{(j)}+j(j+r-1)\Lambda f_{(j-1)}, \ g_{(i+1)}=\partial g_{(i)}+i(i+s-1)\Lambda g_{(i-1)} ,
\end{equation}
with initial conditions $f_{(0)}=f, \ g_{(0)}=g$. Then $(M,[\cdot , \cdot]_{\partial,\Lambda,\ast})$ is a Rankin-Cohen algebra.
\end{prop}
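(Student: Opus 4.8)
The plan is to deduce the result from the standard Rankin-Cohen algebra of Remark \ref{rem srca} by enlarging $M_\ast$ with one extra generator playing the role that $E_2$ plays in the classical theory. Concretely, I would form the polynomial algebra $\widetilde M_\ast:=M_\ast[X]$ with $X$ a new variable of degree $2$, and extend $\partial$ to a degree-$2$ derivation $D$ on $\widetilde M_\ast$ by declaring $D|_{M_\ast}=\partial+X\,\mathcal E$, where $\mathcal E$ is the Euler (weight) derivation with $\mathcal E|_{M_r}=r\cdot\mathrm{id}$, together with $DX=X^2+\Lambda$. Since $\mathcal E$ is itself a derivation, the prescription $Dm=\partial m+rXm$ for $m\in M_r$ is compatible with the Leibniz rule, so $D$ is a well-defined degree-$2$ derivation; the particular value $DX=X^2+\Lambda$ is exactly the one forced below by the recursion \eqref{eq f_(i)}. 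By Remark \ref{rem srca} the pair $(\widetilde M_\ast,[\cdot,\cdot]_{D,\ast})$ is a standard Rankin-Cohen algebra, hence it satisfies \eqref{eq ap1}--\eqref{eq ap11} and every universal identity obeyed by the brackets \eqref{eq srcb}.

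The heart of the argument is a comparison lemma: for all $f\in M_r$, $g\in M_s$ and all $k\geq 0$ one has $[f,g]_{D,k}=[f,g]_{\partial,\Lambda,k}$; in particular the left-hand side is $X$-free, i.e.\ lies in $M_\ast$. I would prove this in two steps. First, by induction on $j$ I would establish the closed expansion
\[
D^j f=\sum_{m=0}^{j}\binom{j}{m}\,(r+j-m)_m\,X^m\,f_{(j-m)},\qquad f\in M_r,
\]
where $(x)_m=x(x+1)\cdots(x+m-1)$ is the rising factorial and $f_{(l)}$ is the sequence of \eqref{eq f_(i)}. The inductive step uses $DX=X^2+\Lambda$, the Leibniz rule, and the recursion rewritten as $\partial f_{(l)}=f_{(l+1)}-l(l+r-1)\Lambda f_{(l-1)}$; the terms carrying an explicit factor $\Lambda$ cancel in pairs via the Pochhammer identity $(p+1)\binom{j}{p+1}(r+j-p-1)_{p+1}=\binom{j}{p}(j-p)(r+j-p-1)(r+j-p)_p$, while the surviving terms recombine through a Pascal-type identity into the expansion for $D^{j+1}$.

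Second, I would substitute the expansions of $D^j f$ and $D^i g$ into $[f,g]_{D,k}=\sum_{i+j=k}(-1)^j\binom{k+r-1}{i}\binom{k+s-1}{j}(D^j f)(D^i g)$ and collect powers of $X$. The coefficient of $X^0$ is immediately $\sum_{i+j=k}(-1)^j\binom{k+r-1}{i}\binom{k+s-1}{j}f_{(j)}g_{(i)}=[f,g]_{\partial,\Lambda,k}$, so the lemma reduces to showing that the coefficient of every $X^p$ with $p\geq 1$ vanishes. I can confirm this directly for $k=1,2$, where the $X$-dependence cancels identically, and I expect the general cancellation to be the genuine obstacle: it is a Vandermonde/hypergeometric-type binomial identity in $r,s,k$ and the summation indices, and it is precisely here that the delicate match between the coefficients $j(j+r-1)$ in \eqref{eq f_(i)} and the value $DX=X^2+\Lambda$ is essential (any other choice would leave a nonzero $X$-term, as the $k=2$ computation already shows). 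Granting the comparison lemma, the proof closes by descent: $M_\ast$ is closed under $[\cdot,\cdot]_{D,k}$ and the restricted brackets coincide with $[\cdot,\cdot]_{\partial,\Lambda,k}$, so every universal Rankin-Cohen identity valid in $(\widetilde M_\ast,[\cdot,\cdot]_{D,\ast})$ specializes verbatim to the corresponding identity for $(M_\ast,[\cdot,\cdot]_{\partial,\Lambda,\ast})$, together with $M_0=\k\cdot 1$ and the correct bidegrees. Hence $(M_\ast,[\cdot,\cdot]_{\partial,\Lambda,\ast})$ is a Rankin-Cohen algebra.
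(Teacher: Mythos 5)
Your construction is the same as the paper's (which in turn follows Zagier): adjoin a single degree-$2$ generator ($\lambda$ in the paper, $X$ for you), extend $\partial$ to $D$ by $Dm=\partial m+r\lambda m$ for $m\in M_r$ and $D\lambda=\lambda^2+\Lambda$, invoke Remark~\ref{rem srca} for the enlarged algebra, and reduce everything to the comparison $[f,g]_{D,k}=[f,g]_{\partial,\Lambda,k}$ followed by descent to $M_\ast$. The gap is that you do not prove the comparison: after inserting your (correct) expansion $D^jf=\sum_{m}\binom{j}{m}(r+j-m)_mX^mf_{(j-m)}$ into the bracket, you must show that the coefficient of each $X^p$ with $p\geq1$ vanishes, and you verify this only for $k=1,2$, explicitly conceding that the general case is an expectation (``the genuine obstacle'') rather than a theorem. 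That identity is the entire content of the proposition; the paper does not reprove it either, but it discharges it by citing the proof of \cite[Proposition~1]{zag94} for \eqref{eq []=[]}, whereas your blind attempt simply stops there. As written, the argument is therefore incomplete at exactly the decisive point.

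The missing idea is that your expansion already implies the cancellation --- no Vandermonde or hypergeometric identity is needed --- once it is repackaged as a generating-series (Cohen--Kuznetsov) identity, which is Zagier's actual argument. For $f$ of weight $r\geq1$ set $\tilde f_D(Y)=\sum_{j\geq0}\frac{D^jf}{j!\,(j+r-1)!}Y^j$ and $\tilde f(Y)=\sum_{j\geq0}\frac{f_{(j)}}{j!\,(j+r-1)!}Y^j$. Your formula is equivalent to $\tilde f_D(Y)=e^{XY}\tilde f(Y)$: the coefficient of $X^mf_{(l)}Y^{l+m}$ on the left equals $\binom{l+m}{m}(r+l)_m\big/\bigl((l+m)!\,(l+m+r-1)!\bigr)=1\big/\bigl(l!\,m!\,(l+r-1)!\bigr)$, which is the coefficient on the right. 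Moreover, for $i+j=k$ one has
\begin{equation*}
\binom{k+r-1}{i}\binom{k+s-1}{j}=\frac{(k+r-1)!\,(k+s-1)!}{j!\,(j+r-1)!\;i!\,(i+s-1)!}\,,
\end{equation*}
so $[f,g]_{D,k}$ is $(k+r-1)!\,(k+s-1)!$ times the coefficient of $Y^k$ in $\tilde f_D(-Y)\,\tilde g_D(Y)$. In that product the factors $e^{-XY}$ and $e^{XY}$ cancel identically, giving $\tilde f_D(-Y)\,\tilde g_D(Y)=\tilde f(-Y)\,\tilde g(Y)$, which is $X$-free and whose $Y^k$-coefficient reproduces exactly $[f,g]_{\partial,\Lambda,k}$. (Weight-zero elements are constants, for which both brackets vanish when $k\geq1$, so assuming $r,s\geq1$ loses nothing.) With this lemma established, your descent paragraph is correct and the proposition follows.
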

{\bf Sketch of proof.}
The only way is to embed $(M,[\cdot , \cdot]_{\partial,\Lambda,\ast})$ into a standard Rankin-Cohen algebra $(R,[\cdot,\cdot]_{D,\ast})$ for some larger $R$ with derivation $D$. Indeed, it is  taken $R=M[\lambda]:=M\otimes_{\k}\k[\lambda]$, where $\lambda\notin M_2$ has degree 2, and the derivation $D$ is defined on the generators of $R$ as follows:
\begin{equation}\label{eq der zag}
  D(f)=\partial(f)+k\lambda f \in R_{k+2}, \ \textrm{for any} \ f\in M_k, \ \textrm{and} \ \ D(\lambda)=\Lambda+\lambda^2\in R_4,
\end{equation}
which can be extended uniquely as a derivation on $R$. Then, for any $k\geq 0$ and any $f\in M_r,\ g\in M_s$, for all $r,s\in \Z_{\geq 0}$, we have:
\begin{equation}\label{eq []=[]}
  [f,g]_{D,k}=[f,g]_{\partial,\Lambda,k}\ \textrm{(see the proof of \cite[Proposition~1]{zag94})}.
\end{equation}
This completes the proof, since $M$ is obviously closed under the brackets $[\cdot,\cdot]_{\partial,\Lambda,k}$. \hfill\(\square\)\\

A Rankin-Cohen algebra $(M,[\cdot , \cdot]_{\ast})$ is called \emph{canonical} if its brackets are given as in
Proposition \ref{prop crca} for some derivation  $\rsdo$ of degree $2$ on M and some element $\Lambda\in M_4$, i.e.,
$[\cdot , \cdot]_k=[\cdot , \cdot]_{\partial,\Lambda,k}$. For example, $(\mfs,[\cdot , \cdot]_{\ast})$ is a canonical
Rankin-Cohen algebra with the Ramanujan-Serre derivation $\partial$ and $\Lambda=\frac{1}{12^2}E_4$.

\section{GMCD for the Dwork family}\label{section MSGMCD}
In Subsections \ref{subsection EMS} and \ref{subsection amsy} we first recall some relevant facts and terminologies from \cite{movnik, younes2}, and for more details one is referred to the same references. Then,
we will observe some new important results in Subsection \ref{subsection R as qhvf} which will be used in the subsequent section. In this manuscript for any positive integer $n$ we fix the notation $\di2:=\frac{n+1}{2}$ if
$n$ is odd, and $\di2:=\frac{n}{2}$ if $n$ is even.

\subsection{Moduli spaces and modular vector field $\Ra$} \label{subsection EMS}
This subsection is based on \cite{movnik}.
 Let $W_z$, for $z\in \P^1\setminus \{0,1,\infty\}$, be an $n$-dimensional hypersurface in $\P^{n+1}$ given by the so-called Dwork family:
\[
f_z(x_0,x_1,\ldots,x_{n+1}):=zx_0^{n+2}+x_1^{n+2}+x_2^{n+2}+\cdots+x_{n+1}^{n+2}-(n+2)x_0
x_1x_2\cdots x_{n+1}=0.
\]
$W_z$ represents a family of \cy $n$-folds. The group $G:=\{(\zeta_0,\zeta_1,\ldots,\zeta_{n+1})\mid \,\, \zeta_i^{n+2}=1,
\ \zeta_0\zeta_1\ldots \zeta_{n+1}=1 \}$,
acts canonically on $W_z$ as
$$
(\zeta_0,\zeta_1,\ldots,\zeta_{n+1}).(x_0,x_1,\ldots,x_{n+1})=(\zeta_0x_0,\zeta_1x_1,\ldots,\zeta_{n+1}x_{n+1}).
$$
We obtain the variety $X=X_z,\ z\in \Pn 1\setminus\{0,1,\infty\}$, by desingularization of the quotient
space $W_z/G $ (for more details see \cite{grmorpl}). From now on, we call $X=X_z$ the \emph{mirror variety}\footnote{The reason for this name is that due to argument given in \cite{grpl90}, the family $X_z$ belongs to the mirror parameter space.} which is also a \cy $n$-fold. It is known that $\dim( H^n_\dR(X))=n+1$ and all Hodge numbers $h^{ij},\ i+j=n,$ of
$X$ are one.

We denote by $\Ts$ the
moduli of the pairs $(X,\alpha_1)$, where $X$ is an $n$-dimensional
mirror variety and $\alpha_1$ is a holomorphic $n$-form on $X$. We
know that the family of mirror varieties $X_z$ is a one parameter
family and the $n$-form $\alpha_1\,$ is unique, up to multiplication
by a constant, therefore $\dim( \Ts)=2$. Analogous to the construction of $X_z$, let $\X_{t_1,t_{n+2}}$,
$(t_1,t_{n+2})\in \C^2 \setminus \{(t_1^{n+2}-t_{n+2})t_{n+2}=0\}$,
be the mirror variety obtained by the quotient and desingularization
of the \cy $n$-folds given by
\begin{equation}\label{eq DF tt}
f_{t_1,t_{n+2}}(x_0,x_1,\ldots,x_{n+1}):=t_{n+2}x_0^{n+2}+x_1^{n+2}+x_2^{n+2}+\cdots+x_{n+1}^{n+2}-(n+2)t_1x_0
x_1x_2\cdots x_{n+1}=0.
\end{equation}
We fix two $n$-forms $\eta$ and $\omega_1$ in the
families $X_z$ and $\X_{t_1,t_{n+2}}$, respectively, such that in the
affine space $\{x_0=1\}$ are given as follows:
\begin{equation}
\eta:=\frac{dx_1\wedge dx_2\wedge \ldots \wedge dx_{n+1}}{df_z} \ ,
\ \ \  \omega_1:=\frac{dx_1\wedge dx_2\wedge \ldots \wedge
dx_{n+1}}{df_{t_1,t_{n+2}}}\ .
\end{equation}
Any element of $\Ts$ is in the form $(X_z,a\eta)$ where $a$ is a
non-zero constant. The pair $(X_z,a\eta)$ can be identified by
$(\X_{t_1,t_{n+2}},\omega_1)$ as follows:
\begin{align}
&(X_z,a\eta)\mapsto
   (\X_{t_1,t_{n+2}},\omega_1)\, , \qquad
   (t_1,t_{n+2})=(a^{-1},za^{-(n+2)}) \, ,\\
   &(\X_{t_1,t_{n+2}},\omega_1)\mapsto
   (X_z,t_1^{-1}\eta) \, ,\qquad z=\frac{t_{n+2}}{t_1^{n+2}}\, .
\end{align}
Hence, $(t_1,t_{n+2})$ construct a chart for $\Ts$; in the other words
\begin{equation}\label{eq MS S}
  \Ts=\spec ( \C[t_1,t_{n+2},\frac{1}{(t_1^{n+2}-t_{n+2})t_{n+2}}])\,,
\end{equation}
and the morphism $\X\to\Ts$ is the universal family of
$(X,\alpha_1)$.
Let
$
 \nabla:H_{\dR}^{n}(\X/\Ts)\to \Omega_\Ts^1\otimes_{\O_\Ts}H_{\dR}^{n}(\X/\Ts)
$ be the Gauss-Manin connection of the  two parameter family of
varieties $\X/\Ts$.  We define the $n$-forms $\omega_i,\,\
i=1,2,\ldots, n+1$, as follows
\begin{equation}
\label{29oct11} \omega_i:= (\nabla_{\frac{\partial}{\partial
t_1}})^{i-1}(\omega_1),
\end{equation}
in which $\frac{\partial}{\partial t_1}$ is considered as a vector
field on the moduli space $\Ts$.
Then $\omega:=\{\omega_1,\omega_2,\ldots,\omega_{n+1}\} $ forms a
basis of $H^{n}_\dR(X)$ which is compatible with its Hodge
filtration, i.e.,
\begin{equation}
\label{eq. Gr. tr.}\omega_i\in F^{n+1-i}\setminus F^{n+2-i},
i=1,2,\ldots, n+1,
\end{equation}
where $F^i$ is the $i$-th piece of the Hodge filtration of
$H^n_\dR(X)$.
We can write the Gauss-Manin connection of $\X/\Ts$ in the basis
$\omega$ as follows
\begin{equation} \label{eq Atilde}
\nabla\omega=\gma \omega\,, {\ \rm with \ } \omega={{\left( {\begin{array}{*{20}{c}}
  {{\omega _1}}&{{\omega _2}}& \ldots &{{\omega _{n + 1}}}
\end{array}} \right)}^{tr}}.
\end{equation}
If we denote by $\gma[i,j]$ the $(i,j)$-th entry of the Gauss-Manin
connection matrix $\gma$, then we obtain:
\begin{align}
&\gma[i,i] = -\frac{i}{(n+2)t_{n+2}}dt_{n+2}\, , \,\ 1 \leq i \leq n\, \label{16/1/2016-1}, \\
&\gma[i,i+1]= dt_1-\frac{t_1}{(n+2)t_{n+2}}dt_{n+2}\, , \,\ 1 \leq i \leq n\, , \label{16/1/2016-2}  \\
&\gma[n+1,j]=\frac{-S_2(n+2,j)t_1^j}{t_1^{n+2}-t_{n+2}}dt_1+\frac{S_2(n+2,j)t_1^{j+1}}{(n+2)t_{n+2}(t_1^{n+2}-t_{n+2})}dt_{n+2}\, , \,\ 1 \leq j \leq n\, , \label{eq stir1} \\
&\gma[n+1,n+1]=\frac{-S_2(n+2,n+1)t_1^{n+1}}{t_1^{n+2}-t_{n+2}}dt_1+
\frac{\frac{n(n+1)}{2}t_1^{n+2}+(n+1)t_{n+2}}{(n+2)t_{n+2}(t_1^{n+2}-t_{n+2})}dt_{n+2}\,, \label{eq stir2}
\end{align}
where $S_2(r,s)$ is the Stirling number of the second kind defined
by \begin{equation} \label{16jan2016} S_2\left( {r,s} \right){\rm{
}} := {\rm{ }}\frac{1}{{s!}}{\rm{ }}\sum\limits_{i = 0}^s {{{( -
1)}^i} \left( {\begin{array}{*{20}{c}}
s\\
i
\end{array}} \right)} {\left( {s - i} \right)^r}\, ,
\end{equation}
and the rest of the entries of $\gma$ are zero.  For any
$\xi_1,\xi_2\in H^n_\dR(X)$, in the context of the de Rham cohomology,
the \emph{intersection form} of $\xi_1$ and $\xi_2$, denoted by
$\langle \xi_1,\xi_2 \rangle$, is given as
$$
\langle \xi_1,\xi_2 \rangle:=\frac{1}{(2\pi i)^n}\int_{X}\xi_1\wedge
\xi_2\,,
$$
which is a non-degenerate $(-1)^n$-symmetric form. We obtain
\begin{align}
  & \langle \omega_i,\omega_j \rangle =0, {\rm \ if} \  i+j\leq n+1\, , \\
  &\langle \omega_1,\omega_{n+1} \rangle
  =(-(n+2))^n\frac{c_n}{t_1^{n+2}-t_{n+2}}, {\rm \ where} \ c_n \  {\rm is \
  a \  constant}\, ,\\
  &\langle \omega_j,\omega_{n+2-j} \rangle =(-1)^{j-1}\langle \omega_1,\omega_{n+1} \rangle, {\rm \ for}  \ j=1,2,\ldots,n+1\, .
\end{align}
On account of these relations, we can determine all the rest of
$\langle\omega_i,\omega_j\rangle$'s in a unique way. If we set
$\Omega=\Omega_n:=\left( \langle
\omega_i,\omega_j\rangle\right)_{1\leq i,j\leq n+1}$ to be the
intersection form matrix in the basis $\omega$, then we have
\begin{equation}
\label{14/12/2015} d\Omega=\gma \Omega+ \Omega \gma^{\tr}.
\end{equation}
For any positive integer $n$ by \emph{moduli space $\T=\T_n$ of enhanced mirror varieties}  we mean the moduli of the pairs $(X,[\alpha_1,\cdots
,\alpha_n,\alpha_{n+1}])$, where $X$ is an $n$-dimensional mirror variety and $\{\alpha_1,\alpha_2,\ldots,\alpha_{n+1}\}$ constructs a basis of $H^n_\dR(X)$
satisfying the properties
\begin{equation}\label{eq bcwhf}
 \alpha_i\in F^{n+1-i}\setminus F^{n+2-i},\ \ i=1,\cdots,n,n+1,
\end{equation}
and
\begin{equation}\label{11jan2016}
[\langle \alpha_i,\alpha_j\rangle]_{1\leq i,j \leq n+1}=\imc_n.
\end{equation}
Here $\imc=\imc_n$ is the following constant $(n+1)\times(n+1)$
matrix:
\begin{equation}\label{eq phi odd}
\imc_n:=\left( {\begin{array}{*{20}c}
   {0_{\di2} } & {J_{\di2}}   \\
   { - J_{\di2}}  & {0_{\di2}}   \\
\end{array}} \right) \, {\rm if \, }n {\rm \, is \, odd,\, and} \,
\imc_n:=J_{n+1} \, {\rm if \, }n {\rm \, is \, even,}
\end{equation}
where by $0_{k}, k\in \mathbb{N},$ we mean a $k\times k$ block of
zeros, $J_1=1$ and
\begin{equation}\small
J_{k }  := \left( {\begin{array}{*{20}c}
   0 & 0 &  \ldots  & 0 & 1  \\
   0 & 0 &  \ldots  & 1 & 0  \\
    \vdots  &  \vdots  &  {\mathinner{\mkern2mu\raise1pt\hbox{.}\mkern2mu
 \raise4pt\hbox{.}\mkern2mu\raise7pt\hbox{.}\mkern1mu}}  &  \vdots  &  \vdots   \\
   0 & 1 &  \ldots  & 0 & 0  \\
   1 & 0 &  \ldots  & 0 & 0  \\
\end{array}} \right), \,\,\textrm{for}\, \, k>1.
\end{equation}
In \cite{movnik} the universal family $\pi:\X\to\T$ together with the global
sections $\alpha_i,\ \ i=1,\cdots, n+1,$ of the relative algebraic de
Rham cohomology $H^n_\dR(\X/\T)$ was constructed, and in its main theorem we observed that:
\begin{theo}{\rm (\cite[Theorem 1.1]{movnik})}
 \label{main3}
There exist a unique vector field $\Ra=\Ra_n \in \mathfrak{X}(\T)$,
and unique regular functions $\Yuk_i\in \O_\T, \ 1\leq i\leq n-2,$
 such that:
\begin{equation}
\label{jimbryan} \nabla_{\Ra}
\begin{pmatrix}
\alpha_1\\
\alpha_2 \\
\alpha_3 \\
\vdots \\
\alpha_n \\
\alpha_{n+1} \\
\end{pmatrix}
= \underbrace {\begin{pmatrix}
0& 1 & 0&0&\cdots &0&0\\
0&0& \Yuk_1&0&\cdots   &0&0\\
0&0&0& \Yuk_2&\cdots   &0&0\\
\vdots&\vdots&\vdots&\vdots&\ddots   &\vdots&\vdots\\
0&0&0&0&\cdots   &\Yuk_{n-2}&0\\
0&0&0&0&\cdots   &0&-1\\
0&0&0&0&\cdots   &0&0\\
\end{pmatrix}}_\Yuk
\begin{pmatrix}
\alpha_1\\
\alpha_2 \\
\alpha_3 \\
\vdots \\
\alpha_n \\
\alpha_{n+1} \\
\end{pmatrix},
\end{equation}
 and $\Yuk\imc+\imc \Yuk^\tr=0$.
\end{theo}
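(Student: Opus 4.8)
The plan is to read the matrix identity \eqref{jimbryan} as a single equation $A(\Ra)=\Yuk$ for the Gauss--Manin connection written in the moving frame $\{\alpha_1,\dots,\alpha_{n+1}\}$, and to solve it on $\T$. First I would describe a point of $\T$ as an underlying point of $\Ts$ (with chart $(t_1,t_{n+2})$) together with the transition matrix $g$ defined by $\alpha=g\,\omega$, where $\omega=(\omega_1,\dots,\omega_{n+1})^{\tr}$ is the Hodge--compatible basis \eqref{29oct11}; by \eqref{eq bcwhf} the matrix $g$ is lower triangular with unit diagonal and entries in $\O_\T$. Starting from $\nabla\omega=\gma\,\omega$, the Leibniz rule gives $\nabla\alpha=A\,\alpha$ with
\[
A=(dg+g\,\gma)\,g^{-1},
\]
a matrix of $1$-forms on $\T$, so that \eqref{jimbryan} is exactly $A(\Ra)=\Yuk$. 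Note that a vertical vector field $v$ (one tangent to the fibres of $\T\to\Ts$) satisfies $\gma(v)=0$, whence $A(v)=dg(v)\,g^{-1}$ is lower triangular, while a lift of $\partial/\partial t_1$ contributes the nonvanishing superdiagonal of $\gma$ from \eqref{16/1/2016-2}.

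Next I would record the two structural constraints. Griffiths transversality $\nabla_w F^p\subseteq F^{p-1}$, applied to the Hodge--compatible frame with $\alpha_i\in F^{n+1-i}$, shows that for every $w\in\mathfrak X(\T)$ the matrix $A(w)$ is lower--Hessenberg, i.e.\ $A(w)[i,j]=0$ whenever $j>i+1$. Thus the prescribed shape of $\Yuk$ (nonzero only on the superdiagonal) is compatible with the geometry, and the real content is to arrange that the diagonal and the strictly lower--triangular part of $A(\Ra)$ vanish. The second constraint comes from the constancy of the intersection form: differentiating $\langle\alpha_i,\alpha_j\rangle=(\imc)_{ij}$ along $\Ra$ and using flatness of $\nabla$ with respect to $\langle\,,\,\rangle$ yields
\[
A(\Ra)\,\imc+\imc\,A(\Ra)^{\tr}=0,
\]
which upon substituting $A(\Ra)=\Yuk$ is precisely the asserted relation $\Yuk\,\imc+\imc\,\Yuk^{\tr}=0$. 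For a strictly--superdiagonal matrix this relation pairs the $(i,i+1)$ entry with the reverse $(n+1-i,n+2-i)$ entry; in particular, once $\Yuk[1,2]$ is normalized to $+1$ it forces $\Yuk[n,n+1]=-1$ and pairs the remaining entries, consistent with the required form.

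To produce $\Ra$ I would write it as a lift of a vector field on $\Ts$ plus a vertical (frame--direction) part, and impose the ``killing'' equations $A(\Ra)[i,j]=0$ for all $j\le i$. By the observations above, the lift controls the superdiagonal through the Kodaira--Spencer factor $g_{ii}/g_{i+1,i+1}$, while the vertical part contributes the lower--triangular correction $dg(\Ra)\,g^{-1}$; solving subdiagonal by subdiagonal turns these into a triangular linear system for the coefficients of $\Ra$. At each stage the relevant leading coefficients are the diagonal entries of $g$ together with the Kodaira--Spencer factor, all of which are units in $\O_\T$ because $t_{n+2}$ and $t_1^{n+2}-t_{n+2}$ are invertible on $\Ts$ (cf.\ \eqref{eq MS S} and the formula for $\langle\omega_1,\omega_{n+1}\rangle$). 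Normalizing so that $A(\Ra)[1,2]=1$ fixes the horizontal coefficient; then all vertical coefficients, and finally the functions $\Yuk_i:=A(\Ra)[i+1,i+2]\in\O_\T$ for $1\le i\le n-2$, are determined uniquely, giving both existence and uniqueness.

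The step I expect to be the main obstacle is the last one: showing the killing system has a unique solution in $\O_\T$ rather than merely in a ring of fractions. This requires that the vertical frame directions, i.e.\ the Lie algebra of the structure group preserving $\imc$ and the Hodge flag, have exactly the dimension needed to cancel all on--and--below--diagonal entries, and that the resulting triangular system be invertible over $\O_\T$. Since that invertibility is governed by the units $t_{n+2}(t_1^{n+2}-t_{n+2})$ and the diagonal of $g$, the entire difficulty is concentrated in verifying regularity of the solution uniformly across $\T$; the relations $\Yuk\,\imc+\imc\,\Yuk^{\tr}=0$ then come for free from the intersection--form computation above.
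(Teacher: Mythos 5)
Your framework is essentially the one the paper inherits from \cite{movnik}: write $\alpha=S\omega$ with $S$ lower triangular, compute the connection in the new frame as $\gm=(dS+S\gma)S^{-1}$ (this is \eqref{eq gm 3/27/2016}), note that Griffiths transversality makes every $\gm_\H$ lower-Hessenberg, and observe that differentiating the constant intersection matrix \eqref{11jan2016} makes $\gm_\H\imc+\imc\gm_\H^{\tr}=0$ automatic, so that $\Yuk\imc+\imc\Yuk^{\tr}=0$ indeed comes for free --- that part of your proposal is correct and matches the paper. However, your very first structural claim is wrong and, taken literally, would make the theorem false: condition \eqref{eq bcwhf} does \emph{not} force the transition matrix to have unit diagonal, only invertible diagonal, and a unit diagonal is incompatible with the constraint $S\Omega S^{\tr}=\imc$ of \eqref{eq sost}, since by \eqref{29/12/2015} the products $s_{ii}s_{(n+2-i)(n+2-i)}$ must absorb the non-constant factor $t_1^{n+2}-t_{n+2}$. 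Worse, with unit diagonal $dS$ is strictly lower triangular, so the superdiagonal entries of $\gm_\H$ come only from $S\gma(\H)S^{-1}$ and reduce to $\gma[i,i+1](\H)\,s_{ii}/s_{(i+1)(i+1)}=\gma[i,i+1](\H)$; since by \eqref{16/1/2016-2} the forms $\gma[i,i+1]$ are the same for every $i$, all superdiagonal entries of $\gm_\Ra$ would coincide, forcing $1=\Yuk_1=\cdots=\Yuk_{n-2}=-1$, absurd for $n\geq 2$. The diagonal entries of $S$ are essential unknowns, not normalizable to $1$: the paper's formulas \eqref{eq yukiodd}--\eqref{eq yukieven} exhibit the $\Yuk_i$ precisely as the ratios $s_{22}s_{(i+1)(i+1)}/s_{(i+2)(i+2)}$. (Your later appeal to the Kodaira--Spencer factor $g_{ii}/g_{i+1,i+1}$ silently contradicts your own unit-diagonal claim.)

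The second, more fundamental problem is that the step you defer as ``the main obstacle'' is the entire content of the theorem. Existence and uniqueness of a \emph{regular} solution is not a residual invertibility check; it is where all the work happens, and the paper (following \cite{movnik}) does it concretely: one first solves $S\Omega S^{\tr}=\imc$ to express the dependent entries of $S$ in terms of the chart \eqref{eq ems}, and then reads the matrix equation $\dot S_\Ra=\Yuk S-S\,\gma(\Ra)$ (equation \eqref{eq Sdot} with $\gm_\Ra=\Yuk$) entry by entry: the $(1,1)$ and $(1,2)$ entries give $\dot t_1$ and $\dot t_{n+2}$, each remaining independent entry $t_i$ corresponds to exactly one $\dot s_{jk}$, and the $\Yuk_i$ come out in the closed form above, all manifestly in $\O_\T=\C[t_1,\ldots,t_\dt,\frac{1}{t_{n+2}(t_{n+2}-t_1^{n+2})\check t}]$; uniqueness follows from the same entry-by-entry analysis, of which the rigidity statement recorded in Remark \ref{rem uniqueness} is a by-product. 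Your proposal verifies neither the dimension count --- that the vertical directions (the Lie algebra of the group preserving $\imc$ and the Hodge flag) together with the two horizontal directions match the killing equations modulo the automatic symmetry --- nor the regularity of the resulting coefficients. As written, it is a plausible plan with a broken first step, not a proof.
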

Here $\O_\T$ refers to the $\C$-algebra of regular functions on
$\T$, and $\nabla_{\Ra}$ stands for the algebraic Gauss-Manin connection
$$
\nabla:H_{\dR}^{n}(\X/\T)\to
\Omega_\T^1\otimes_{\O_\T}H_{\dR}^{n}(\X/\T),
$$
composed with the vector field $\Ra\in \mathfrak{X}(\T)$, in which
$\Omega_\T^1$ refers to the $\O_{\T}$-module of differential 1-forms on $\T$.
We call $\Ra$ as \emph{modular vector field} attached to Dwork family. Moreover, we found that:
\begin{equation}
\label{29dec2015} \dt=\dt_n:=\dim ( \T)=\left \{
\begin{array}{l}
\frac{(n+1)(n+3)}{4}+1,\,\, \quad  \textrm{\rm if \textit{n} is odd}
\\\\
\frac{n(n+2)}{4}+1,\,\,\,\,\quad\quad \textrm{\rm if \textit{n} is
even}
\end{array} \right. .
\end{equation}

The above theorem is the key tool of GMCD. In the GMCD viewpoint, the vector field $\Ramvf$ given in \eqref{eq ramanujan}, up to multiplying the coordinates by constants $(t_1,t_2,t_3)=(12t_1,12t_2,\frac{12^3}{8}t_3)$, is the unique vector field that satisfies
\begin{equation}\label{eq gmcd of Ra}
\nabla_\Ramvf\alpha=\left(
                    \begin{array}{cc}
                      0 & 1 \\
                      0 & 0 \\
                    \end{array}
                  \right) \alpha \ ,
\end{equation}
where $\alpha=(\ \alpha_1\ \ \alpha_2\ )^\tr$ and $\nabla$ is the Gauss-Manin connection of the universal family
of elliptic curves
\begin{equation}\label{eq ufec}
y^2=4(x-t_1)^3-t_2(x-t_1)-t_3 \ , \ \ \alpha_1=[\frac{dx}{y}], \ \alpha_2=[\frac{xdx}{y}], \ \text{with} \ \ 27t_3^2-t_2^3\neq 0\ .
\end{equation}
We can generalize the notion of the Ramanujan-Serre derivation \eqref{eq rsd2} and the Rankin-Cohen bracket \eqref{eq rcb}
for the modular vector fields $\Ra=\Ra_n$ using an analogous procedure explained for the Ramanujan vector field $\Ramvf$, which will be treated in Section \ref{section RCACY}.

Next we are going to present a chart for the moduli space $\T$. In order to do
this, let $S=\left(
                           \begin{array}{c}
                             s_{ij} \\
                           \end{array}
                         \right)_{1\leq i,j\leq n+1}$ be a lower triangular matrix, whose  entries are indeterminates $s_{ij},\ \ i\geq j$ and $s_{11}=1$.
We define
\[\underbrace {{{\left( {\begin{array}{*{20}{c}}
  {{\alpha _1}}&{{\alpha _2}}& \ldots &{{\alpha _{n + 1}}}
\end{array}} \right)}^{tr}}}_\alpha  = S\underbrace {\,\,{{\left( {\begin{array}{*{20}{c}}
  {{\omega _1}}&{{\omega _2}}& \ldots &{{\omega _{n + 1}}}
\end{array}} \right)}^{tr}}}_\omega \, ,\]
which implies that $\alpha$  forms a basis of $H^n_\dR(X)$
compatible with its Hodge filtration. We would like that
$(X,[\alpha_1,\alpha_2,\ldots,\alpha_{n+1}])$ be a member of $\T$,
hence it has to satisfy $\left(
                           \begin{array}{c}
                             \langle\alpha_i,\alpha_j\rangle \\
                           \end{array}
                         \right)_{1\leq i,j\leq n+1}=\Phi$, from what we get the following equation
\begin{equation}\label{eq sost}
S\Omega S^\tr=\Phi.
\end{equation}
Using this equation we can express $d_0:=\frac{(n+2)(n+1)}{2}-\dt-2$
numbers of parameters $s_{ij}$'s in terms of other $\dt-2$
parameters that we fix them as \emph{independent parameters}. For simplicity we write the first
class of parameters as $\check t_1,\check t_2,\cdots, \check
t_{d_0}$ and the second class as $t_2,
t_3,\ldots,t_{n+1},t_{n+3},\ldots,t_\dt$. We put the independent
parameters $t_i$ inside $S$ according to the following rule which is not canonical: $t_i$'s  are written in  $S$ from  left to right and top to bottom in the entries $(i,j)$ for $i+j<n+2$ if $n$ is even and $i+j\leq n+2$ if $n$ is odd. The position of $\check t_i$'s inside $S$ can be chosen arbitrarily. For instance, for $n=1,2,3,4,5$ we have:
\[\tiny \left( {\begin{array}{*{20}{c}}
  1&0 \\
  {{t_2}}&{{{\check t}_1}}
\end{array}} \right) , \left( {\begin{array}{*{20}{c}}
1&0&0\\
{{t_2}}&{{\check t}_2}&0\\
{{\check t}_4}&{{\check t}_3}&{{\check t}_1}
\end{array}} \right),\left( {\begin{array}{*{20}{c}}
1&0&0&0\\
{{t_2}}&{{t_3}}&0&0\\
{{t_4}}&{{t_6}}&{{\check t}_2}&0\\
{{t_7}}&{{\check t}_4}&{{\check t}_3}&{{\check t}_1}
\end{array}} \right), \left( {\begin{array}{*{20}{c}}
1&0&0&0&0\\
{{t_2}}&{{t_3}}&0&0&0\\
{{t_4}}&{{t_5}}&{{\check t}_3}&0&0\\
{{t_7}}&{{\check t}_7}&{{\check t}_5}&{{\check t}_2}&0\\
{{\check t}_9}&{{\check t}_8}&{{\check t}_6}&{{\check t}_4}&{{\check
t}_1}
\end{array}} \right), \left( {\begin{array}{*{20}{c}}
  1&0&0&0&0&0 \\
  {{t_2}}&{{t_3}}&0&0&0&0 \\
  {{t_4}}&{{t_5}}&{{t_6}}&0&0&0 \\
  {{t_8}}&{{t_9}}&{{t_{10}}}&{{{\check t}_3}}&0&0 \\
  {{t_{11}}}&{{t_{12}}}&{{{\check t}_7}}&{{{\check t}_5}}&{{{\check t}_2}}&0 \\
  {{t_{13}}}&{{{\check t}_9}}&{{{\check t}_8}}&{{{\check t}_6}}&{{{\check t}_4}}&{{{\check t}_1}}
\end{array}} \right)
.\]
Note that we have already used  $t_1,t_{n+2}$ as coordinate
system of $\Ts$. In particular we find:
\begin{equation}
\label{29/12/2015} s_{(n+2-i)(n+2-i)}=
\frac{(-1)^{n+i+1}}{c_n(n+2)^n}\frac{t_1^{n+2}-t_{n+2}}{s_{ii}}, \
1\leq i \leq m \,.
\end{equation}
In this way, $\t:=(t_1,t_2,\ldots,t_d)$ forms a chart for the moduli space
$\T$, and in fact
\begin{align}
 \label{eq ems}
 \T&=\spec(\C[t_1,t_2,\ldots, t_{\dt},\frac{1}{t_{n+2}(t_{n+2}-t_1^{n+2})\check t }])\,,\\
 \O_\T&=\C[t_1,t_2,\ldots, t_{\dt},\frac{1}{t_{n+2}(t_{n+2}-t_1^{n+2})\check t }]\,\label{eq rfs}.
\end{align}
Here, $\check t$ is the product of $\di2-1$ independent parameters which are located in the main
diagonal of $S$. From now on, we alternately use either $s_{ij}$'s, or $t_i$'s
and ${\check t}_j$'s to refer the entries of $S$. If we denote by
$\gm$  the Gauss-Manin connection matrix of the family $\X/\T$
written in the basis $\alpha$, i.e., $\nabla \alpha=\gm \alpha$,
then we calculate $\gm$ as follows:
\begin{equation}\label{eq gm 3/27/2016}
\gm=\left (dS+S\cdot \gma\right )\, S^{-1}\,.
\end{equation}
If for any vector field $\H\in \mathfrak{X}(\T)$ we define
the \emph{Gauss-Manin connection matrix} attached to $\H$ as $(n+1)\times(n+1)$ matrix $\gm_\H$ given by:
\begin{equation}\label{eq A_H}
\nabla_\H\alpha=\gm_\H \alpha,
\end{equation}
then from \eqref{eq gm 3/27/2016} we obtain:
\begin{equation} \label{eq Sdot}
\dot{S}_\H=\gm_\H S-S\, \gma(\H)\,,
\end{equation}
where $\dot{S}_\H=dS(\H)$  and $\dot x:=dx(\H)$ is the derivative of the function $x$ along
the vector field $\H$ in $\T$. Note that equalities corresponding to $(1,1)$-th and $(1,2)$-th entries of \eqref{eq Sdot} give us respectively $\dot{t}_1$ and $\dot{t}_{n+2}$, and any $\dot{t}_i,\,1\leq i\leq \dt,\, i\neq 1,n+2$, corresponds
to only one $\dot{s}_{jk}$, $1\leq j,k\leq n+1$.
In the following remarks we recall some useful results deduced from the
proof of Theorem \ref{main3} in \cite[\S 7]{movnik}.

\begin{rem}
We obtain the functions $\Yuk_i$'s given in \eqref{jimbryan} as
follows: if $n$ is odd, then
\begin{align}
&\Yuk_{i}=-\Yuk_{n-(i+1)}=\frac{s_{22}\,s_{(i+1)(i+1)}}{s_{(i+2)(i+2)}},\
\ i=1,2,\ldots, \frac{n-3}{2}\, , \label{eq yukiodd}\\
&\Yuk_{\frac{n-1}{2}}=(-1)^{\frac{3n+3}{2}}c_n(n+2)^n\frac{s_{22}\,s_{\frac{n+1}{2}\frac{n+1}{2}}^2}{t_1^{n+2}-t_{n+2}}\,
, \label{eq yukmodd}
\end{align}
and if $n$ is even, then
\begin{align}
&\Yuk_{i}=-\Yuk_{n-(i+1)}=\frac{s_{22}\,s_{(i+1)(i+1)}}{s_{(i+2)(i+2)}},\
\ i=1,2,\ldots, \frac{n-2}{2}\, . \label{eq yukieven}
\end{align}
\end{rem}
\begin{rem}\label{rem uniqueness}
Let $\H\in \mathfrak{X}(\T)$. If $\ \nabla_\H\alpha=0$ for any
$(X,[\alpha_1,\alpha_2,\ldots,\alpha_{n+1}])\in \T$, then $\H=0$.
\end{rem}

We finish this subsection with the following example.

\begin{exam}{\rm
In \cite{movnik} for $n=1,2$  we found the modular vector fields $\Ra_1, \Ra_2$, respectively, as follows:
\begin{equation}
 \label{lovely-1}
 \Ra_{1}:  \left \{ \begin{array}{l} \dot
t_1=-t_1t_2-9(t_1^3-t_3)
\\
\dot t_2=81t_1(t_1^3-t_3)-t_2^2
\\
\dot t_3=-3t_2t_3
\end{array} \right. , \qquad
\Ra_{2}:\left \{ \begin{array}{l}
\dot{t}_1=t_3-t_1t_2\\
\dot{t}_2=2t_1^2-\frac{1}{2}t_2^2\\
\dot{t}_3=-2t_2t_3+8t_1^3\\
\dot{t}_4=-4t_2t_4
\end{array} \right.,
\end{equation}
where by $\dot t_j$ in $\Ra_1$ we mean $\dot t_j=3\cdot q\cdot \frac{\partial t_j}{\partial q}$ and in $\Ra_2$ we mean $\dot t_j=-\frac{1}{5}\cdot q\cdot \frac{\partial t_j}{\partial q}$, and furthermore in $\Ra_2$ we have the polynomial equation $t_3^2=4(t_1^4-t_4)$.
For a complex number $\tau$ with $\rm\, Im \tau>0$, if
we set $q=e^{2\pi i \tau}$, then we obtained the following solutions of
$\Ra_{1}$ and $\Ra_{2}$ respectively:
\begin{equation} \label{eq solution R01}
\left \{ \begin{array}{l}
{\t}_1(q)=\frac{1}{3}(2\theta_3(q^2)\theta_3(q^6)\\ \qquad\qquad\ -\theta_3(-q^2)\theta_3(-q^6)),\\
{\t}_2(q)=\frac{1}{8}(E_2(q^2)-9E_2(q^6)),\\
{\t}_3(q)=\frac{\eta^9({q}^3)}{\eta^3({q})},
\end{array} \right. , \
\left \{ \begin{array}{l}
\frac{10}{6}{\t}_1(\frac{q}{10})=\frac{1}{24}(\theta_3^4(q^2)+\theta_2^4(q^2)),\\\\
\frac{10}{4}{\t}_2(\frac{q}{10})=\frac{1}{24}(E_2(q^2)+2E_2(q^4)),\\\\
10^4{\t}_4(\frac{{q}}{10})=\eta^8({q})\eta^8({q}^2),
\end{array} \right.
\end{equation}
in which  $\eta$ and  $\theta_i$'s  are the classical
eta and theta series given as follows:
\begin{align}
&\eta({q})={q}^{\frac{1}{24}}\prod_{k=1}^\infty (1-{q}^{k}),\,\,\,
\theta_2(q)=\sum_{k=-\infty}^\infty
q^{\frac{1}{2}(\frac{k+1}{2})^2}\,\, , \,\,\,
\theta_3(q)=1+2\sum_{k=1}^\infty q^{\frac{1}{2}k^2}.
\end{align}}
\end{exam}
\subsection{AMSY-Lie algebra and $\sl2$ Lie algebra} \label{subsection amsy}
In this subsection we give a summary of the main results of \cite{younes2}. For any positive integer $n$ the  algebraic
group:
\begin{equation} \label{eq LG}
\LG=\LG_n:=\{\gG\in \textrm{GL}(n+1,\C)\, | \ \gG \rm{\ is  \ upper \
triangular \ and \ } \gG^\tr \Phi \gG=\Phi \},
\end{equation}
acts on the moduli space $\T$ from the right, and its Lie algebra:
\begin{equation} \label{eq LA}
\LA=\{\gL\in{\rm Mat}(n+1,\C)\ | \ \gL \ {\rm is \ upper \
triangular \ and} \ \gL^\tr\Phi+\Phi\gL=0  \}\,,
\end{equation}
is a $\dt-1$ dimensional Lie algebra with the canonical
basis consisting of $\gL_{\a \b}$'s, $1\leq \a \leq \di2, \ \a\leq
\b \leq 2\di2 +1-\a$, given as follows: if $n$ is odd, then
\begin{equation} \label{eq g_ab1}
\gL_{\a\b}={\left( {{g_{kl}}} \right)_{(n + 1) \times (n + 1)}} , \
{\rm where} \left\{ {\begin{array}{*{20}{c}}
{{g_{\a\b}} = 1,\,\,{g_{(n + 2 - \b)(n + 2 - \a)}} =  - 1,{\rm when \ } \b\leq m, }\\
{{g_{\a\b}} ={g_{(n + 2 - \b)(n + 2 - \a)}} =  1,{\rm when \ } \b\geq m+1, }\ \\
{{\rm and \ the \ rest \ of \ the \ entries \ of }\ \gL_{\a\b} {\rm  are \ zero.}\ \ \ \, }
\end{array}} \right.
\end{equation}
and if $n$ is even, then:
\begin{equation} \label{eq g_ab2}
\gL_{\a\b}={\left( {{g_{kl}}} \right)_{(n + 1) \times (n + 1)}} , \
{\rm such \ that \ } \left\{ {\begin{array}{*{20}{c}}
{{g_{\a\b}} = 1,\,\,{g_{(n + 2 - \b)(n + 2 - \a)}} =  - 1,}\qquad\qquad\ \ \ \\
{{\rm and \ the \ rest \ of \ the \ entries \ of }\ \gL_{\a\b} \ {\rm  are \ zero. } \, }
\end{array}} \right.
\end{equation}
The following theorem was proved in \cite{younes2}.
\begin{theo} {\rm (\cite[Theorem 1.2]{younes2})} \label{theo 1}
For any $\gL\in\LA$, there exists a unique vector field $\Ra_\gL\in
\mathfrak{X}(\T)$ such that:
\begin{equation}\label{29jul2017}
\gm_{\Ra_\gL}=\gL^\tr,
\end{equation}
i.e., $\nabla_{\Ra_\gL}\alpha =\gL^\tr\alpha$.
\end{theo}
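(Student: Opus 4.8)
The plan is to prove uniqueness and existence separately, reducing the latter to the single matrix equation \eqref{eq Sdot}. Uniqueness is immediate from Remark \ref{rem uniqueness}: if $\H_1,\H_2\in\mathfrak{X}(\T)$ both satisfy $\nabla_{\H_i}\alpha=\gL^\tr\alpha$, then $\nabla_{\H_1-\H_2}\alpha=0$ identically on $\T$, so $\H_1=\H_2$; hence at most one $\Ra_\gL$ exists, and by \eqref{eq A_H} the required identity $\gm_{\Ra_\gL}=\gL^\tr$ is the same as $\nabla_{\Ra_\gL}\alpha=\gL^\tr\alpha$.

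For existence I would write $\Ra_\gL=\sum_{j=1}^{\dt}\dot t_j\,\frac{\partial}{\partial t_j}$ and use \eqref{eq Sdot}, which turns $\gm_{\Ra_\gL}=\gL^\tr$ into $\dot S_{\Ra_\gL}=\gL^\tr S-S\,\gma(\Ra_\gL)$. The key structural facts are that $\gma$ involves only $dt_1$ and $dt_{n+2}$, so $\gma(\Ra_\gL)$ depends solely on $\dot t_1,\dot t_{n+2}$, and that $\gL^\tr S$ and $\dot S_{\Ra_\gL}$ are lower triangular (the former because $\gL$ is upper triangular, the latter because $S$ is). Since the first row of $S$ is $(1,0,\dots,0)$, the $(1,1)$- and $(1,2)$-entries of the equation involve only $\dot t_{n+2}$ and $\dot t_1$ and solve for them explicitly, as constant multiples of $t_{n+2}$ and $t_1$; this is the remark made just after \eqref{eq Sdot}. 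With $\dot t_1,\dot t_{n+2}$ fixed the right-hand side becomes a definite matrix of regular functions, and reading off the entries at the positions of the independent parameters $t_i$ defines the remaining $\dot t_i$, each from exactly one entry $\dot s_{jk}$. This yields the only possible candidate.

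It then remains to verify that this candidate solves the equation at the entries not used, namely (i) the entries strictly above the diagonal and (ii) the entries at the positions of the dependent parameters $\check t_j$. For (i), lower triangularity reduces the equations to $(S\,\gma(\Ra_\gL))_{jk}=0$ for $k>j$; using that $\gma$ is nonzero only on the diagonal, the single superdiagonal and the last row, every superdiagonal equation collapses to the one condition $\gma_{1,2}(\Ra_\gL)=0$ already imposed by the $(1,2)$-entry, while the entries with $k>j+1$ vanish identically. For (ii) I would use the identity valid for every $\H\in\mathfrak{X}(\T)$ obtained by differentiating $S\Omega S^\tr=\Phi$ (see \eqref{eq sost}) along $\H$ and inserting \eqref{eq Sdot} together with $d\Omega=\gma\Omega+\Omega\gma^\tr$ from \eqref{14/12/2015}: all $\gma(\H)$-terms cancel and one is left with $\gm_\H\Phi+\Phi\gm_\H^\tr=0$. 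Applied to the candidate this gives $\gm_{\Ra_\gL}\Phi+\Phi\gm_{\Ra_\gL}^\tr=0$, while $\gL\in\LA$ gives $\gL^\tr\Phi+\Phi\gL=0$ by \eqref{eq LA}; subtracting, the error $D:=\gm_{\Ra_\gL}-\gL^\tr$ satisfies $D\Phi+\Phi D^\tr=0$, and since $DS$ already vanishes everywhere except at the positions of the $\check t_j$, this antisymmetry is what should force the remaining entries to vanish, giving $D=0$.

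I expect step (ii) to be the main obstacle, since one must check that the $d_0=\frac{(n+2)(n+1)}{2}-\dt-2$ dependent positions are exactly matched by the relation $D\Phi+\Phi D^\tr=0$; because the placement of the $\check t_j$ is non-canonical, the argument has to be phrased intrinsically through this antisymmetry rather than through the explicit chart. As a conceptual cross-check, existence also follows by realizing $\Ra_\gL$ as the infinitesimal generator on $\T$ of the one-parameter subgroup $\exp(s\gL)\subset\LG$ acting from the right: this flow fixes the underlying variety and transforms the chosen basis by $\exp(s\gL)^\tr$, so $\nabla_{\Ra_\gL}\alpha=\gL^\tr\alpha$ holds directly and the consistency in (i)--(ii) becomes automatic.
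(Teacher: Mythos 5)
Your uniqueness paragraph is correct and complete: by Remark \ref{rem uniqueness} the difference of two solutions kills $\alpha$ under $\nabla$, hence vanishes, and \eqref{eq A_H} identifies the two formulations of the conclusion. Your step (i) is also sound: since $\gma$ involves only $dt_1$ and $dt_{n+2}$, the $(1,1)$- and $(1,2)$-entries of \eqref{eq Sdot} determine $\dot t_{n+2}$ and $\dot t_1$ (indeed $\dot t_{n+2}=-(n+2)\gL_{11}t_{n+2}$, $\dot t_1=-\gL_{11}t_1$), every superdiagonal equation repeats the single condition $\gma(\Ra_\gL)[1,2]=0$, and the entries with $k>j+1$ vanish identically. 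The genuine gap is step (ii), exactly where you flag it. You correctly derive that the error $D=\gm_{\Ra_\gL}-\gL^\tr$ satisfies $D\Phi+\Phi D^\tr=0$ (equivalently, with $E:=DS$ and $\Phi=S\Omega S^\tr$, that $E\Omega S^\tr+S\Omega E^\tr=0$) and that $E$ is supported on the positions of the $\check t_j$'s; but the assertion that this forces $E=0$ is not formal and is never proved. It is an injectivity statement for a specific linear map, and verifying it requires using which positions are dependent (the lower-triangular entries $(i,j)$ with $i+j$ beyond the threshold in the paper's filling rule) together with the shape of $\Omega$. That verification is precisely the substance of the proof the paper points to: the paper contains no in-text proof of Theorem \ref{theo 1} — it cites \cite[\S 4.1]{younes2} — and describes its content (in the proof of Proposition \ref{prop Risqh2}) as the check that the equations $S\Omega S^\tr=\Phi$ and $\dot S_\gL=\gm_\gL S-S\,\gma(\gL)$ are compatible for every $\gL\in\LA$. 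So, as written, your main argument produces a candidate vector field and proves uniqueness, but not existence.

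Note that your closing ``conceptual cross-check'' is not a side remark: it is the clean way to close the gap, and you should promote it to the actual existence proof. The paper already supplies (from \cite{younes2}) that $\LG$ acts on $\T$ from the right, fixing the underlying variety $X$ and replacing the frame $\alpha$ by $\gG^\tr\alpha$; this is a right action and it preserves \eqref{eq bcwhf} and \eqref{11jan2016} because $\gG$ is upper triangular and $\gG^\tr\Phi\gG=\Phi$. Defining $\Ra_\gL$ as the infinitesimal generator of $s\mapsto\exp(s\gL)$, the orbit is a curve over which the family is constant, so the Gauss--Manin connection restricted to it is the trivial connection on $H^n_\dR(X)$, and
\begin{equation}
\nabla_{\Ra_\gL}\alpha=\frac{d}{ds}\Big|_{s=0}\exp(s\gL)^\tr\alpha=\gL^\tr\alpha\,.
\end{equation}
Combined with your uniqueness argument this proves the theorem with no chart computation at all, and your steps (i)--(ii) then become consequences (the compatibility you could not verify directly) rather than the proof.
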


This theorem yields that the Lie algebra generated by  $\Ra_{\gL_{\a \b}}$'s, $1\leq \a \leq
\di2, \ \a\leq \b \leq 2\di2 +1-\a$, in $\mathfrak{X}(\T)$ with the Lie
bracket of the vector fields is isomorphic to $\LA$ with the Lie bracket of
the matrices. Hence, we use $\LA$ alternately either as a Lie subalgebra
of $\mathfrak{X}(\T)$ or as a Lie subalgebra of ${\rm Mat}(n+1,\C)$.

By \emph{AMSY-Lie algebra}\footnote{The AMSY-Lie algebra was discussed for the first time in \cite{alimov} for non-rigid compact
CY $3$-folds, and in \cite{murmar} it is established for mirror elliptic K3 surfaces. Note
that the AMSY-Lie algebra is called Gauss-Manin Lie algebra by authors of \cite{murmar}.} $\amsy$ we mean the $\O_\T$-module generated by $\LA$
and the modular vector field $\Ra$ in $\mathfrak{X}(\T)$. In what follows, $\delta_j^k$ denotes the Kronecker
delta, $\varrho(n)=1$ if $n$ is an odd integer, and $\varrho(n)=0$
if $n$ is an even integer, $\Yuk_j$'s, $1\leq j\leq n-2$, are the functions given in Theorem \ref{main3}, and besides them we let
$\Yuk_0=-\Yuk_{n-1}:=1$. The following theorem determines the Lie bracket of $\amsy$, which was demonstrated in \cite{younes2}.

\begin{theo} {\rm (\cite[Theorem 1.3]{younes2})} \label{theo 2}
The following hold:
 \begin{align}
&{[\Ra,\Ra_{\gL_{11}}]}=\Ra, \label{eq Rag11}\\
&{[\Ra,\Ra_{\gL_{22}}]}=-\Ra, \label{eq Rag22} \\
&{[\Ra,\Ra_{\gL_{\a\a}}]}=0, \ 3\leq \a \leq m\, , \label{eq Ragaa} \\
&[\Ra,{\Ra_{\gL_{\a\b}}}]=\Psi_1^{\a\b}(\Yuk)\,{\Ra_{\gL_{(\a+1)\b}}}+\Psi_2^{\a\b}(\Yuk)\,{\Ra_{\gL_{\a(\b-1)}}},\,
 1\leq \a \leq \di2, \ \a+1\leq \b \leq 2\di2+1-\a \, , \label{eq Ragab}
\end{align}
where
\begin{align}
&\Psi_1^{\a\b}(\Yuk):=(1+\varrho(n)\delta_{\a+\b}^{2\di2}-\delta_{\a+\b}^{2\di2+1})\,\Yuk_{\a-1},\\
&\Psi_2^{\a\b}(\Yuk):= (1-2\varrho(n)\delta_{\b}^{\di2+1})\,\Yuk_{n+1-\b}\,.
\end{align}
\end{theo}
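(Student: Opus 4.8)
The plan is to reduce each bracket $[\Ra,\Ra_{\gL_{\a\b}}]$ to an identity between $(n+1)\times(n+1)$ matrices by exploiting the integrability (flatness) of the Gauss--Manin connection $\nabla$. For any $\H_1,\H_2\in\mathfrak{X}(\T)$ the vanishing of the curvature gives $\nabla_{[\H_1,\H_2]}=\nabla_{\H_1}\nabla_{\H_2}-\nabla_{\H_2}\nabla_{\H_1}$. Applying this to the column $\alpha$, writing $\nabla_{\H_i}\alpha=\gm_{\H_i}\alpha$ as in \eqref{eq A_H}, and using the Leibniz rule $\nabla_{\H_1}(\gm_{\H_2}\alpha)=\H_1(\gm_{\H_2})\alpha+\gm_{\H_2}\gm_{\H_1}\alpha$ (where $\H(M)$ denotes $\H$ applied entrywise to a matrix $M$ of functions), I obtain the master formula
\[
\gm_{[\H_1,\H_2]}=\H_1(\gm_{\H_2})-\H_2(\gm_{\H_1})+\gm_{\H_2}\gm_{\H_1}-\gm_{\H_1}\gm_{\H_2}.
\]
Specializing $\H_1=\Ra$, $\H_2=\Ra_{\gL}$ with $\gL\in\LA$, and inserting $\gm_{\Ra}=\Yuk$ (Theorem \ref{main3}) together with $\gm_{\Ra_\gL}=\gL^\tr$ (Theorem \ref{theo 1}), the latter a constant matrix so that $\Ra(\gL^\tr)=0$, collapses the formula to
\[
\gm_{[\Ra,\Ra_\gL]}=-\Ra_\gL(\Yuk)+\gL^\tr\Yuk-\Yuk\gL^\tr.
\]

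The right-hand side splits into a commutator part and a derivative part. Since $\Yuk$ is the strictly upper-triangular matrix of \eqref{jimbryan} with superdiagonal entries $\Yuk_{i-1}$ (and $\Yuk_0=1$, $\Yuk_{n-1}=-1$), the commutator $\gL_{\a\b}^\tr\Yuk-\Yuk\gL_{\a\b}^\tr$ is a finite matrix computation once the explicit entries of $\gL_{\a\b}$ from \eqref{eq g_ab1}--\eqref{eq g_ab2} are substituted. The substantive term is $\Ra_{\gL_{\a\b}}(\Yuk)$: by \eqref{eq yukiodd}--\eqref{eq yukieven} each $\Yuk_i$ is a monomial in the diagonal entries $s_{jj}$ of $S$ and in $t_1^{n+2}-t_{n+2}$, so it is enough to evaluate $\Ra_{\gL_{\a\b}}$ on these. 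For that I would read off the diagonal of \eqref{eq Sdot}, namely $\dot S_{\Ra_{\gL_{\a\b}}}=\gL_{\a\b}^\tr S-S\,\gma(\Ra_{\gL_{\a\b}})$, after first extracting $\dot t_1,\dot t_{n+2}$ from its $(1,1)$ and $(1,2)$ entries and contracting the explicit connection one-form $\gma$ of \eqref{16/1/2016-1}--\eqref{eq stir2} against $\Ra_{\gL_{\a\b}}$.

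With the matrix $-\Ra_{\gL_{\a\b}}(\Yuk)+\gL_{\a\b}^\tr\Yuk-\Yuk\gL_{\a\b}^\tr$ in hand, I would verify in each case that it equals the connection matrix of the asserted vector field: $\Yuk$, $-\Yuk$ and $0$ for \eqref{eq Rag11}--\eqref{eq Ragaa}, and $\Psi_1^{\a\b}(\Yuk)\,\gL_{(\a+1)\b}^\tr+\Psi_2^{\a\b}(\Yuk)\,\gL_{\a(\b-1)}^\tr$ for the generic case \eqref{eq Ragab}; here I use that $\nabla$ is $\O_\T$-linear in the vector-field slot, so that an $\O_\T$-combination of the $\Ra_{\gL'}$ has connection matrix the same combination of the matrices $\gL'^\tr$. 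Because a vector field on $\T$ is uniquely determined by its connection matrix (Theorem \ref{theo 1}, equivalently Remark \ref{rem uniqueness}), equality of connection matrices forces equality of the vector fields; this closes each identity and, in particular, shows that every bracket stays in the $\O_\T$-span of the $\Ra_{\gL'}$ and $\Ra$.

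The hard part will be the derivative term $\Ra_{\gL_{\a\b}}(\Yuk_i)$. It requires simultaneously tracking how $\Ra_{\gL_{\a\b}}$ rescales the diagonal entries $s_{jj}$ through \eqref{eq Sdot}, where the pairing relation \eqref{29/12/2015} couples the lower-right diagonal block to the upper-left one, and the boundary phenomena encoded by $\varrho(n)$ and by the Kronecker deltas $\delta_{\a+\b}^{2\di2}$, $\delta_{\a+\b}^{2\di2+1}$, $\delta_\b^{\di2+1}$ appearing in $\Psi_1^{\a\b},\Psi_2^{\a\b}$, which switch on exactly when $(\a,\b)$ meets the centre of the form $\imc$ defining $\LA$. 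Arranging this bookkeeping uniformly over the odd and even cases, so that the surviving contributions collapse to precisely the single upward shift $\gL_{(\a+1)\b}$ and the single leftward shift $\gL_{\a(\b-1)}$ with the stated scalars, is where the real effort lies; the diagonal identities \eqref{eq Rag11}--\eqref{eq Ragaa} then drop out as the degenerate cases in which one shift is absent and the commutator alone reproduces $\pm\Yuk$ or cancels.
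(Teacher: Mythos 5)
Your proposal is correct and follows essentially the paper's own route: the manuscript states Theorem \ref{theo 2} without proof, deferring to \cite{younes2}, and the argument there is exactly yours --- flatness of the Gauss--Manin connection gives $\gm_{[\H_1,\H_2]}=\H_1(\gm_{\H_2})-\H_2(\gm_{\H_1})+\gm_{\H_2}\gm_{\H_1}-\gm_{\H_1}\gm_{\H_2}$, which after inserting $\gm_\Ra=\Yuk$ (Theorem \ref{main3}) and $\gm_{\Ra_\gL}=\gL^\tr$ (Theorem \ref{theo 1}) reduces each bracket to the matrix identity you write, verified case by case and converted back into an identity of vector fields by the uniqueness statement of Remark \ref{rem uniqueness}. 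The derivative term $\Ra_{\gL_{\a\b}}(\Yuk)$ that you single out, extracted from \eqref{eq Sdot} together with \eqref{eq yukiodd}--\eqref{eq yukieven}, is indeed where the bulk of the computation in \cite{younes2} lies, so your plan matches both the structure and the distribution of labor of the actual proof.
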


If $n=1,2$, then we see that $\amsy$ is isomorphic to
$\mathfrak{sl}_2(\C)$. In general, for $n\geq 3$ we
have a copy of $\mathfrak{sl}_2(\C)$ as a Lie subalgebra of $\amsy$ which contains the modular vector field $\Ra$ and we state it in the following
theorem from Ref. \cite{younes2}.
\begin{theo} {\rm (\cite[Theorem 1.3]{younes2})}  \label{theo 3}
Let us define the vector fields $\rvf$ and $\cvf$ as follows:
\begin{enumerate}
  \item if $n=1$, then $\rvf:=-\Ra_{\gL_{11}}$ and $\cvf:=\Ra_{\gL_{12}}$,
  \item if $n=2$, then $\rvf:=-2\Ra_{\gL_{11}}$ and $\cvf:=2\Ra_{\gL_{12}}$,
  \item if $n\geq 3$, then $\rvf:=\Ra_{\gL_{22}}-\Ra_{\gL_{11}}$ and $\cvf:=\Ra_{\gL_{12}}$.
\end{enumerate}
Then the Lie algebra generated by the vector fields
$\Ra, \rvf, \cvf$ in $\amsy\subset\mathfrak{X}(\T)$ is isomorphic to
$\mathfrak{sl}_2(\C)$; indeed we get:
\[
[\Ra,\cvf]=\rvf \ , \ \ [\rvf,\Ra]=2\Ra \ , \ \ [\rvf,\cvf]=-2\cvf
\, .
\]
\end{theo}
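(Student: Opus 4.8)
The plan is to verify the three bracket relations directly by exploiting the results already established, rather than computing any vector fields in coordinates. The key observation is that two of the three generators, namely $\rvf$ and $\cvf$, lie in $\LA$, so their mutual brackets and their brackets with $\Ra$ are governed entirely by Theorem \ref{theo 2} (together with the fact that $\LA$ is closed under the Lie bracket by Theorem \ref{theo 1}). Only $\Ra$ itself lies outside $\LA$, so the entire computation reduces to evaluating the structure constants $\Psi_1^{\a\b}$ and $\Psi_2^{\a\b}$ at the relevant indices and assembling them.

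First I would treat the generic case $n\geq 3$, where $\rvf=\Ra_{\gL_{22}}-\Ra_{\gL_{11}}$ and $\cvf=\Ra_{\gL_{12}}$. To obtain $[\rvf,\Ra]=2\Ra$ I combine \eqref{eq Rag11} and \eqref{eq Rag22}: since $[\Ra,\Ra_{\gL_{11}}]=\Ra$ and $[\Ra,\Ra_{\gL_{22}}]=-\Ra$, antisymmetry gives $[\rvf,\Ra]=[\Ra_{\gL_{22}}-\Ra_{\gL_{11}},\Ra]=-[\Ra,\Ra_{\gL_{22}}]+[\Ra,\Ra_{\gL_{11}}]=\Ra+\Ra=2\Ra$. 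To obtain $[\Ra,\cvf]=\rvf$ I apply \eqref{eq Ragab} with $(\a,\b)=(1,2)$; here the output is a combination of $\Ra_{\gL_{22}}$ and $\Ra_{\gL_{11}}$, and the task is to check that the coefficients $\Psi_1^{12}(\Yuk)$ and $\Psi_2^{12}(\Yuk)$ evaluate (using $\Yuk_0=1$ and the definition of $\varrho$ and the Kronecker deltas) to exactly the signs producing $\Ra_{\gL_{22}}-\Ra_{\gL_{11}}$. Finally $[\rvf,\cvf]=-2\cvf$ is a bracket internal to $\LA$, so I compute it via the matrix bracket $[\gL_{22}-\gL_{11},\gL_{12}]$ in $\mathrm{Mat}(n+1,\C)$ and transport the answer back through the isomorphism of Theorem \ref{theo 1}; a short matrix multiplication of the explicit generators in \eqref{eq g_ab1}--\eqref{eq g_ab2} yields $-2\gL_{12}$.

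For the low-dimensional cases $n=1$ and $n=2$, where the definitions of $\rvf$ and $\cvf$ carry different scalar factors and use $\gL_{11}$ instead of $\gL_{22}-\gL_{11}$, I would run the same three checks separately. These are genuinely lighter because $\amsy\cong\sl2$ is already asserted in the text just before Theorem \ref{theo 3}, so each relation is a single application of \eqref{eq Rag11}, \eqref{eq Ragab} at $(\a,\b)=(1,2)$, and a two-by-two or three-by-three matrix bracket; the scalar factors $-1,-2,2$ in the definitions are chosen precisely to normalize the structure constants to the standard $\sl2$ values, so verifying them is a matter of tracking constants.

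The main obstacle I anticipate is the careful bookkeeping in the evaluation of $\Psi_1^{\a\b}$ and $\Psi_2^{\a\b}$ at $(\a,\b)=(1,2)$ across the parity cases, since the formulas involve the indicator $\varrho(n)$ and several Kronecker deltas $\delta_{\a+\b}^{2\di2}$, $\delta_{\a+\b}^{2\di2+1}$, $\delta_\b^{\di2+1}$ whose values flip depending on whether $n$ is odd or even and on the size of $\di2$. Getting the signs consistent so that \eqref{eq Ragab} collapses exactly to $\rvf$ (and not its negative) is where an error would most plausibly hide. The second point requiring care is confirming that the explicit generators $\gL_{11},\gL_{22},\gL_{12}$ from \eqref{eq g_ab1}--\eqref{eq g_ab2} really satisfy the matrix identity $[\gL_{22}-\gL_{11},\gL_{12}]=-2\gL_{12}$ (respectively the $n=1,2$ analogues), which again reduces to a routine but parity-sensitive multiplication of sparse matrices; once those two verifications are in hand, the theorem follows immediately.
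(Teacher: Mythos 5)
Your route --- reconstructing the three $\sl2$ relations from the structure constants of Theorem \ref{theo 2}, the bracket-isomorphism stated after Theorem \ref{theo 1}, and a matrix computation inside $\LA$ --- is the natural one, and indeed the only one available inside this paper, since the paper itself offers no proof of Theorem \ref{theo 3} (it is quoted from \cite{younes2}). For $n\geq 2$ your checks do go through: for $n\geq 3$ one finds $\Psi_1^{12}(\Yuk)=\Yuk_0=1$ and $\Psi_2^{12}(\Yuk)=\Yuk_{n-1}=-1$ in both parities, so \eqref{eq Ragab} gives $[\Ra,\cvf]=\Ra_{\gL_{22}}-\Ra_{\gL_{11}}=\rvf$; equations \eqref{eq Rag11} and \eqref{eq Rag22} give $[\rvf,\Ra]=2\Ra$; and writing $\gL_{11}=e_{11}-e_{(n+1)(n+1)}$, $\gL_{22}=e_{22}-e_{nn}$, $\gL_{12}=e_{12}-e_{n(n+1)}$ in elementary matrices $e_{ij}$, one gets $[\gL_{22}-\gL_{11},\gL_{12}]=-2\gL_{12}$, hence $[\rvf,\cvf]=-2\cvf$. (This last transport does silently assume that $\gL\mapsto\Ra_\gL$ preserves rather than reverses brackets; the paper's wording supports that reading, but it deserves a sentence.) The case $n=2$ is likewise consistent: $\Psi_1^{12}=0$, $\Psi_2^{12}=\Yuk_1=-1$, and the scalar factors $-2$ and $2$ compensate.

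The genuine gap is at $n=1$, exactly where you dismiss the verification as ``a matter of tracking constants.'' There $\gL_{11}=\diag(1,-1)$ and $\gL_{12}=e_{12}$ are $2\times 2$, so the matrix bracket is $[\gL_{11},\gL_{12}]=2\gL_{12}$, not $\gL_{12}$; the Jacobi identity applied to $\Ra,\Ra_{\gL_{11}},\Ra_{\gL_{12}}$, together with $[\Ra,\Ra_{\gL_{12}}]=-\Ra_{\gL_{11}}$ from \eqref{eq Ragab}, then forces $[\Ra,\Ra_{\gL_{11}}]=2\Ra$, in conflict with \eqref{eq Rag11}. This is confirmed directly: computing $\Ra_{\gL_{11}}$ from the defining property in Theorem \ref{theo 1} gives $\Ra_{\gL_{11}}=-\bigl(t_1\frac{\partial}{\partial t_1}+2t_2\frac{\partial}{\partial t_2}+3t_3\frac{\partial}{\partial t_3}\bigr)=-\rvf$, and since $\Ra_1$ in \eqref{lovely-1} is quasi-homogeneous of degree $2$ for the weights $(1,2,3)$ while $\rvf$ in \eqref{eq rvf n=1} is the corresponding Euler field, $[\rvf,\Ra]=2\Ra$. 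So \eqref{eq Rag11}, as reproduced in this paper with no restriction on $n$, cannot be invoked at $n=1$: an honest application of your recipe yields $[\rvf,\Ra]=-[\Ra_{\gL_{11}},\Ra]=[\Ra,\Ra_{\gL_{11}}]=\Ra$, i.e.\ the structure constant $1$ where the theorem requires $2$, and the argument collapses for that case. To close the gap, treat $n=1$ by direct computation with the explicit vector fields of Example \ref{exam 1} (or by the Euler-field argument just indicated) instead of citing Theorem \ref{theo 2}; your $n=2$ case stands as written, because there $\Ra_{\gL_{11}}=-\frac{1}{2}\rvf$ and all constants are mutually consistent.
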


According to Theorem \ref{theo 3}, if $n=1,2$, then $\amsy$ is isomorphic to
$\mathfrak{sl}_2(\C)$ (see Example \ref{exam 1}), and for $n\geq 3$ the Lie subalgebra of $\amsy$ generated by
$\Ra$, $\rvf:=\Ra_{\gL_{22}}-\Ra_{\gL_{11}}$ and $\cvf:=\Ra_{\gL_{12}}$ is isomorphic to $\mathfrak{sl}_2(\C)$. Using the equalities corresponding to $(1,1)$-th and $(1,2)$-th entries of \eqref{eq Sdot} for the vector fields $\Ra_{\gL_{\a\b}}$'s we obtain
the diagonal matrix $\gma(\Ra_{\gL_{11}})=\diag(1,2,\ldots,n+1)$ and the null matrices $\gma(\Ra_{\gL_{\a\b}})=0$, for $1\leq \a\leq m, \, \a\leq
\b\leq 2m+1-\a,\, \b\neq 1$ (see \cite[\S~4.4]{younes2}). Due to these facts and again \eqref{eq Sdot}, we can find $\dot{S}_{\Ra_{\gL_{\a\b}}}$'s, and consequently we obtain $\Ra_{\gL_{\a\b}}$'s.
In particular, knowing that $\dot{S}_{\rvf}=\dot{S}_{\Ra_{\gL_{22}}}-\dot{S}_{\Ra_{\gL_{11}}}$, we get $dt_1(\rvf)=t_1,\, dt_{n+2}(\rvf)=(n+2)t_{n+2}$, and hence
\begin{equation} \label{eq SdotH}
\dot{S}_{\rvf}={\tiny \left( {\begin{array}{*{20}{c}}
0&0&0&0& \ldots &0&0&0\\
{2{s_{21}}}&{3{s_{22}}}&0&0& \ldots &0&0&0\\
{{s_{31}}}&{2{s_{32}}}&{3{s_{33}}}&0& \ldots &0&0&0\\
{{s_{41}}}&{2{s_{42}}}&{3{s_{43}}}&{4{s_{44}}}& \ldots &0&0&0\\
 \vdots & \vdots & \vdots & \vdots & \ddots & \vdots & \vdots & \vdots \\
{{s_{(n - 1)1}}}&{2{s_{(n - 1)2}}}&{3{s_{(n - 1)3}}}&{4{s_{(n - 1)4}}}& \ldots &{(n - 1){s_{(n - 1)(n - 1)}}}&0&0\\
0&{{s_{n2}}}&{2{s_{n3}}}&{3{s_{n4}}}& \ldots &{(n - 2){s_{n(n - 1)}}}&{(n - 1){s_{nn}}}&0\\
{2{s_{(n + 1)1}}}&{3{s_{(n + 1)2}}}&{4{s_{(n + 1)3}}}&{5{s_{(n + 1)4}}}& \ldots &{n{s_{(n + 1)(n - 1)}}}&{(n + 1){s_{(n + 1)n}}}&{(n + 2){s_{(n + 1)(n + 1)}}}
\end{array}} \right)\,.}
\end{equation}
Thus, for an even integer $n\geq 5$ we get:
{\small
\begin{align}
\rvf&=t_1\frac{\partial}{\partial t_1}+2t_2\frac{\partial}{\partial t_2}+3t_3\frac{\partial}{\partial t_3}+\sum_{ \ i = 4\hfill\atop
i \ne n + 2\hfill}^{\dt-1}w_it_i\frac{\partial}{\partial t_i}+(n+2)t_{n+2}\frac{\partial}{\partial t_{n+2}}+\frac{n+2}{2}t_{\dt+1}\frac{\partial}{\partial t_{\dt+1}}\,, \label{eq rvf even} \\
\cvf&=\frac{\partial}{\partial t_2}\,,\label{eq cvf even}
\end{align}}
with $t_{\dt+1}^2=s_{\frac{n+2}{2}\frac{n+2}{2}}^2=\frac{(-1)^{\frac{n}{2}}}{c_n(n+2)^n}(t_1^{n+2}-t_{n+2})$ (see \eqref{29/12/2015}), and for an odd integer $n\geq 5$ we obtain:
{\small
\begin{align}
\rvf&=t_1\frac{\partial}{\partial t_1}+2t_2\frac{\partial}{\partial t_2}+3t_3\frac{\partial}{\partial t_3}+\sum_{ \ i = 4\hfill\atop
i \ne n + 2\hfill}^{\dt-3}w_it_i\frac{\partial}{\partial t_i}+(n+2)t_{n+2}\frac{\partial}{\partial t_{n+2}}+t_{\dt-1}\frac{\partial}{\partial t_{\dt-1}}+2t_{\dt}\frac{\partial}{\partial t_{\dt}}\,, \label{eq rvf odd} \\
\cvf&=\frac{\partial}{\partial t_2}-t_{\dt-2}\frac{\partial}{\partial t_\dt}\,.\label{eq cvf odd}
\end{align}}
In both equations \eqref{eq rvf even} and \eqref{eq rvf odd} we have $w_i=k  \ \textrm{if} \ t_i=s_{jk}$ for some $1\leq j,k \leq n+1$, i.e., $w_i$ is the number of the column of the entry $t_i$. Note that $\rvf$ and $\cvf$ have been computed explicitly for $n=1,2,3,4$ in Example \ref{exam 1}, which are similar to the $\rvf$ and $\cvf$ founded above for the cases $n\geq 5$. Hence, in general we can write $\rvf$ as:
\begin{equation}\label{eq rvf gf}
  \rvf=\sum_{i=1}^{\dt}w_it_i\frac{\partial}{\partial t_i}\,,
\end{equation}
where $w_i$'s are non-negative integers.
\begin{rem}\label{rem weights}
\begin{enumerate}
  \item If $n=1$, then $w_1=1,\, w_2=2,\, w_3=3$.
  \item If $n=2$, then $w_1=2,\, w_2=2,\, w_4=8$.
  \item If $n=3$, then $w_1=1,\, w_2=2,\, w_3=3,\, w_4=0,\, w_5=5,\, w_6=1,\, w_7=2$.
  \item If $n\geq 4$ is an even integer, then $w_1=1,\, w_2=2,\, w_3=3,\,w_{n+2}=n+2,\,w_\dt=0$.
  \item If $n\geq 5$ is an odd integer, then $w_1=1,\, w_2=2,\, w_3=3,\,w_{n+2}=n+2,\,w_{\dt-2}=0,\,w_{\dt-1}=1,\,w_{\dt}=2$.
\end{enumerate}
\end{rem}

\subsection{$\Ra$ as a quasi-homogeneous vector field} \label{subsection R as qhvf}
Let us attach to any $t_i$ in $\O_\T$ the weight $\deg(t_i)=w_i$, in which the non-negative integers $w_i$'s are given in \eqref{eq rvf gf}. Recall that a vector field $\H=\sum_{j=1}^{\dt}\H^j\frac{\partial}{\partial t_j}\in \mathfrak{X}(\T)$, with $\H^j\in \O_\T$, is said to be \emph{quasi-homogeneous of degree} $d$ if
for any $1\leq j\leq \dt$ we have $\deg(\H^j)=w_j+d$. Hence, on account of \eqref{eq rvf even}, \eqref{eq cvf even}, \eqref{eq rvf odd}, \eqref{eq cvf odd} and Remark \ref{rem weights} the vector fields $\rvf$ and $\cvf$ are quasi-homogeneous of degree $0$ and $-2$, respectively. The vector field $\rvf$ is also known as the radial
vector field. Moreover, in the following proposition we show that $\Ra$ is a quasi-homogeneous vector field as well.

\begin{prop}\label{prop Risqh2}
The modular vector field $\Ra$ is a quasi-homogeneous vector field of degree $2$ on $\T$.
\end{prop}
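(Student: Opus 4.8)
The plan is to reduce the proposition to the single Lie-bracket identity $[\rvf,\Ra]=2\Ra$ already established in Theorem~\ref{theo 3}, by recognizing $\rvf$ as the Euler (radial) vector field of the grading $\deg(t_i)=w_i$ and translating quasi-homogeneity into that bracket relation. First I would note that, by \eqref{eq rvf gf}, $\rvf=\sum_{i=1}^{\dt}w_it_i\frac{\partial}{\partial t_i}$ is precisely the Euler vector field for the weights $w_i$, so that Euler's identity $\rvf(g)=\deg(g)\,g$ holds for every homogeneous $g\in\O_\T$.

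Next I would establish the general principle that a vector field $\H=\sum_{j=1}^{\dt}\H^j\frac{\partial}{\partial t_j}\in\mathfrak{X}(\T)$ is quasi-homogeneous of degree $d$ if and only if $[\rvf,\H]=d\,\H$. Computing the bracket, the second-order terms cancel and $\H(w_jt_j)=w_j\H^j$, leaving
\begin{equation*}
[\rvf,\H]=\sum_{j=1}^{\dt}\bigl(\rvf(\H^j)-w_j\H^j\bigr)\frac{\partial}{\partial t_j}.
\end{equation*}
If each $\H^j$ is homogeneous, Euler's identity turns the $j$-th component into $(\deg(\H^j)-w_j)\H^j$, which equals $d\,\H^j$ for all $j$ exactly when $\deg(\H^j)=w_j+d$, i.e. when $\H$ is quasi-homogeneous of degree $d$. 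For the converse I would decompose each $\H^j$ into homogeneous pieces and read off from $\rvf(\H^j)=(w_j+d)\H^j$ that only the piece of degree $w_j+d$ survives. Granting this, the proposition follows at once: Theorem~\ref{theo 3} gives $[\rvf,\Ra]=2\Ra$, whence $\Ra$ is quasi-homogeneous of degree $2$.

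The step that genuinely needs care, and which I expect to be the main obstacle, is verifying that $\deg$ is a well-defined grading on $\O_\T$, so that Euler's identity is legitimate. Since $\O_\T$ is generated by the entries $s_{ij}$ of $S$ subject to the relations \eqref{eq sost} together with the inverse of the denominator $t_{n+2}(t_{n+2}-t_1^{n+2})\check t$ of \eqref{eq rfs}, it suffices to check that both the relations and the denominator are homogeneous. The denominator is homogeneous because $\deg(t_{n+2})=n+2=(n+2)\deg(t_1)$ makes $t_{n+2}-t_1^{n+2}$ homogeneous of degree $n+2$, while $\check t$ is a product of diagonal entries $s_{kk}$, each of weight equal to its column index $k$. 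Homogeneity of the relations is witnessed by \eqref{29/12/2015}, which gives $\deg(s_{(n+2-i)(n+2-i)})=(n+2)-\deg(s_{ii})$, matching the column-index weights $\deg(s_{ii})=i$ and $\deg(s_{(n+2-i)(n+2-i)})=n+2-i$. As the quotient of a weighted polynomial ring by a homogeneous ideal, localized at a homogeneous element, stays $\Z$-graded, $\deg$ is a grading and the argument above closes.
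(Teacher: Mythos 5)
Your proof is correct, and it takes a genuinely different route from the paper's. The paper argues computationally: it derives $\gma(\Ra)$ from \eqref{eq t1dtn+2d} and the explicit Gauss--Manin entries \eqref{16/1/2016-1}--\eqref{eq stir2}, forms $\dot{S}_\Ra=\Yuk S-S\,\gma(\Ra)$ as in \eqref{eq SdotR}, and checks entry by entry, using the degrees of the $\Yuk_i$ from \eqref{eq yukiodd}--\eqref{eq yukieven}, that $\deg(\dot{S}_\Ra[i,j])=\deg(s_{ij})+2$. You instead combine the relation $[\rvf,\Ra]=2\Ra$ of Theorem \ref{theo 3} (quoted from \cite{younes2}, hence legitimately available and proved there without reference to quasi-homogeneity, so there is no circularity) with the standard Euler-field characterization of quasi-homogeneity; both directions of your bracket argument are sound, and your worry about the grading being well defined on $\O_\T$ is exactly the consistency point the paper disposes of by citing the compatibility of \eqref{eq sost} with \eqref{eq Sdot} for $\gL=\rvf$ from \cite[\S 4.1]{younes2}. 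What your route buys is brevity and conceptual transparency: no Stirling numbers, no matrix multiplication. What it loses is the by-products: the matrix $\dot{S}_\Ra$ computed inside the paper's proof is what Remark \ref{rem comp vf R} refers to, and it is reused in Lemma \ref{lemm R^2} (where $\Ra^2=\Yuk_1t_4-t_2^2$ is read off from its $(2,1)$-entry) and, via Remark \ref{rem comp vf R}, in the proof of Lemma \ref{lemm fund} (which needs $\Ra^{\dt-2}=-t_\dt-t_2t_{\dt-2}$); your argument supplies none of these explicit formulas, so the paper would still have to carry out that computation elsewhere. Two minor blemishes, neither fatal: for $n=2$ the weights are doubled (Remark \ref{rem weights} gives $w_1=2$, $w_4=8$), so your equality $\deg(t_{n+2})=n+2$ fails there, although the homogeneity of $t_{n+2}-t_1^{n+2}$ that you actually need survives because $\deg(t_{n+2})=(n+2)\deg(t_1)$ in every case; and \eqref{29/12/2015} witnesses only the anti-diagonal relations coming from \eqref{eq sost}, whereas one strictly needs homogeneity of the full ideal generated by the entries of $S\Omega S^{\tr}-\Phi$ --- which is again precisely what the compatibility statement invoked by the paper guarantees.
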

\begin{proof}
Due to Example \ref{exam 1} the affirmation is valid for $n=1,2,3,4$. Hence we suppose that $n\geq 5$. First note that in the proof of Theorem \ref{theo 1} (see \cite[\S~4.1]{younes2}) it is verified that the equations $S\Omega S^\tr=\imc$ and  $\dot{S}_\gL=\gm_\gL S-S\, \gma(\gL)$ are compatible for any $\gL\in \LA$. In particular, it holds for $\gL=\rvf$. This implies that the degree of any entry $s_{jk}$ of $S$, $2\leq j\leq n+1,\, 1\leq k \leq j$,  is equal to the integer multiple of $s_{jk}$ in the matrix $\dot{S}_\rvf$, which is stated in \eqref{eq SdotH}. If we set $\Ra=\sum_{i=1}^{\dt}\dot{t}_{i} \frac{\partial}{\partial t_i}$, then $\dot{t}_i$'s follow from
\begin{equation}\label{eq SdotR}
  \dot{S}_\Ra=\Yuk S-S\, \gma(\Ra)\,.
\end{equation}
More precisely, from the equalities corresponding to $(1,1)$-th and $(1,2)$-th entries of \eqref{eq SdotR} we obtain:
\begin{equation} \label{eq t1dtn+2d}
\dot{t}_1=s_{22}-t_1s_{12} \ \ \& \ \ \dot{t}_{n+2}=-(n+2)s_{21}t_{n+2}\,.
\end{equation}
These equalities and \eqref{16/1/2016-1}-\eqref{eq stir2} imply:
\begin{align*}
&\left(-\frac{k}{(n+2)t_{n+2}}dt_{n+2}\right)(\Ra)=ks_{21},\,\, 1\leq k \leq n\,,\\
&\left(dt_1-\frac{t_1}{(n+2)t_{n+2}}dt_{n+2}\right)(\Ra)=s_{22}\,,\\
&\left(\frac{-S_2(n+2,j)t_1^j}{t_1^{n+2}-t_{n+2}}dt_1+\frac{S_2(n+2,j)t_1^{j+1}}{(n+2)t_{n+2}(t_1^{n+2}-t_{n+2})}dt_{n+2}\right)(\Ra)=
\frac{-S_2(n+2,j)t_1^js_{22}}{t_1^{n+2}-t_{n+2}}\,,
\end{align*}
\begin{align*}\label{2110}
  \left(\frac{-S_2(n+2,n+1)t_1^{n+1}}{t_1^{n+2}-t_{n+2}}dt_1 \right. & \left. +\frac{\frac{n(n+1)}{2}t_1^{n+2}+(n+1)t_{n+2}}{(n+2)t_{n+2}(t_1^{n+2}-t_{n+2})}dt_{n+2}\right)(\Ra)\\
  & =(n+1)s_{21}-\frac{(n+1)(n+2)}{2}\frac{t_1^{n+1}s_{22}}{t_1^{n+2}-t_{n+2}}\,.\qquad\qquad\ \,
\end{align*}
Note that in the above last equality we used the fact that $S_2(n+2,n+1)=\frac{(n+1)(n+2)}{2}$.
Therefore:
\[\gma(\Ra)={\tiny \left( {\begin{array}{*{20}{c}}
{{s_{21}}}&{{s_{22}}}&0&0&0\\
0&{2{s_{21}}}&{{s_{22}}}&\ldots&0\\
 \vdots & \vdots & \ddots & \ddots & \vdots \\
0&0&\ldots&{n{s_{21}}}&{{s_{22}}}\\
\frac{-S_2(n+2,1)t_1s_{22}}{t_1^{n+2}-t_{n+2}}&\frac{-S_2(n+2,2)t_1^2s_{22}}{t_1^{n+2}-t_{n+2}}&\ldots&\frac{-S_2(n+2,n)t_1^ns_{22}}{t_1^{n+2}-t_{n+2}}&
(n+1)s_{21}-\frac{(n+1)(n+2)}{2}\frac{t_1^{n+1}s_{22}}{t_1^{n+2}-t_{n+2}}
\end{array}} \right)}\,,\]
hence, $S\,\gma(\Ra)$ equals
\begin{equation}\label{eq SB(R)}
{\tiny \left(
  \begin{array}{ccccccc}
    s_{21} & s_{22} & 0 & 0 & \ldots & 0 & 0 \\
    s_{21}s_{21} & s_{21}s_{22}+2s_{22}s_{21} & s_{22}s_{22} & 0 & \ldots & 0 & 0  \\
    s_{31}s_{21} & s_{31}s_{22}+2s_{32}s_{21} & s_{32}s_{22}+3s_{33}s_{21} & s_{33}s_{22} & \ldots & 0 & 0  \\
    s_{41}s_{21} & s_{41}s_{22}+2s_{42}s_{21} & s_{42}s_{22}+3s_{43}s_{21} & s_{43}s_{22}+ 4s_{44}s_{21}& \ldots & 0 & 0  \\
    \vdots & \vdots & \vdots & \vdots & \ddots & \vdots & \vdots  \\
    s_{n1}s_{21} & s_{n1}s_{22}+2s_{n2}s_{21} & s_{n2}s_{22}+3s_{n3}s_{21} & s_{n3}s_{22}+4s_{n4}s_{21} & \ldots & s_{n(n-1)}s_{22}+ns_{nn}s_{21} & s_{nn}s_{22}  \\
    S\gma(\Ra)[n+1,1] & S\gma(\Ra)[n+1,2] & S\gma(\Ra)[n+1,3] & S\gma(\Ra)[n+1,4] & \ldots & S\gma(\Ra)[n+1,n] & S\gma(\Ra)[n+1,n+1]  \\
  \end{array}
\right)}\,,
\end{equation}
in which:
\begin{align*}
&S\gma(\Ra)[n+1,1]=s_{(n+1)1}s_{21}-\frac{S_2(n+2,1)t_1s_{22}s_{(n+1)(n+1)}}{t_1^{n+2}-t_{n+2}}\,,\\
&S\gma(\Ra)[n+1,j]=s_{(n+1)(j-1)}s_{22}+js_{(n+1)j}s_{21}-\frac{S_2(n+2,j)t_1^js_{22}s_{(n+1)(n+1)}}{t_1^{n+2}-t_{n+2}}\,, \ \ 2\leq j \leq n \,,\\
&S\gma(\Ra)[n+1,n+1]=s_{(n+1)n}s_{22}+s_{(n+1)(n+1)}\left( (n+1)s_{21}-\frac{(n+1)(n+2)}{2}\frac{t_1^{n+1}s_{22}}{t_1^{n+2}-t_{n+2}} \right) \,.
\end{align*}
Observe that
\begin{equation}\label{eq yuk mat}
\Yuk S={\tiny \left( {\begin{array}{*{20}{c}}
{{s_{21}}}&{{s_{22}}}&0&0& \ldots &0&0\\
{{\Yuk_1}{s_{31}}}&{{\Yuk_1}{s_{32}}}&{{\Yuk_1}{s_{33}}}&0& \ldots &0&0\\
{{\Yuk_2}{s_{41}}}&{{\Yuk_2}{s_{42}}}&{{\Yuk_2}{s_{43}}}&{{\Yuk_2}{s_{44}}}& \ldots &0&0\\
 \vdots & \vdots & \vdots & \vdots & \ddots & \vdots & \vdots \\
{{\Yuk_{n - 2}}{s_{n1}}}&{{\Yuk_{n - 2}}{s_{n2}}}&{{\Yuk_{n - 2}}{s_{n3}}}&{{\Yuk_{n - 2}}{s_{n4}}}& \ldots &{{\Yuk_{n - 2}}{s_{nn}}}&0\\
{ - {s_{(n + 1)1}}}&{ - {s_{(n + 1)2}}}&{ - {s_{(n + 1)3}}}&{ - {s_{(n + 1)4}}}& \ldots &{ - {s_{(n + 1)n}}}&{ - {s_{(n + 1)(n + 1)}}}\\
0&0&0&0& \ldots &0&0
\end{array}} \right)\,,}
\end{equation}
and \eqref{eq yukiodd}-\eqref{eq yukieven} imply that $\deg(\Yuk_1)=\deg(\Yuk_{n-2})=3$ and $\deg(\Yuk_j)=2,\, 2\leq j\leq n-3$. If we denote the $(i,j)$-th entry of $\dot{S}_\Ra$ by $\dot{S}_\Ra[i,j]$, then \eqref{eq SdotR}, \eqref{eq SB(R)} and \eqref{eq yuk mat} yield $\deg(\dot{S}_\Ra[i,j])=\deg(s_{ij})+2,\, 2\leq i \leq n+1,\, 1\leq j \leq i$, which complete the proof.
\end{proof}
\begin{rem} \label{rem comp vf R}
Using the matrix $\dot{S}_\Ra=\Yuk S-S\, \gma(\Ra)$ computed in the proof of the above proposition we can encounter the modular
vector field $\Ra$ explicitly for any $n\geq 5$.
\end{rem}

We can prove Proposition \ref{prop Risqh2} in a simpler way, but we can not compute $Ra$ explicitly in this case. Below we give this proof as well.
\bigskip\\
{\it Another proof of Proposition \ref{prop Risqh2}.} Since $\rvf=\sum_{j=1}^{\dt}w_j \frac{\partial}{\partial t_j}$ and $\deg(t_j)=w_j$, we can easily observe that for a given $f\in \O_\T$ we have $\rvf (f)=kf$, for a $k\in \Z$, if and only if $f$ is a quasi-homogeneous element of degree $k$ of $\O_\T$. According to $[\rvf,\Ra]=2\Ra$, for any quasi-homogenous element $f\in \T$ of degree $k\in \Z$ we have:
\[
[\rvf,\Ra](f)=2\Ra(f) \Rightarrow \rvf(\Ra(f))-\Ra(\rvf(f))=2\Ra(f) \Rightarrow \rvf(\Ra(f))=(k+2)\Ra(f),
\]
which implies $\Ra(f)$ is a quasi-homogeneous element of degree $k+2$, and this is equivalent to say that $\Ra$ is a quasi-homogeneous vector field of degree $2$. \hfill\(\square\)

 \bigskip
The following lemma is useful for the future use.
\begin{lemm}\label{lemm R^2}
If we write
\[
\Ra=\sum_{j=1}^{\dt}\Ra^j(t_1,t_2,\ldots,t_\dt)\frac{\partial}{\partial t_j}, \ \textrm{with} \ \ \Ra^j\in \O_\T\,,
\]
and define
\begin{equation}\label{eq Lambda}
\Lambda(t_1,t_2,\ldots,t_\dt):= \left\{
                                  \begin{array}{ll}
                                    -\frac{1}{2}\Ra^2(t_1,t_2,\ldots,t_\dt)-\frac{1}{4}t_2^2, &if\ n=2; \hbox{} \\\\
                                    -\Ra^2(t_1,t_2,\ldots,t_\dt)-t_2^2, &if\ n \neq 2, \hbox{}
                                  \end{array}
                                \right.
\end{equation}
then $\deg (\Lambda)=4$ and $\frac{\partial \Lambda}{\partial t_2}=0$.
\end{lemm}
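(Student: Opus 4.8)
The plan is to handle the two claims separately. The degree statement is immediate from quasi-homogeneity: by Proposition \ref{prop Risqh2} the vector field $\Ra$ is quasi-homogeneous of degree $2$, and $w_2=2$ in all cases (Remark \ref{rem weights}), so $\deg(\Ra^2)=w_2+2=4$, while $\deg(t_2^2)=2w_2=4$. Since $\Lambda$ is, in both branches of \eqref{eq Lambda}, a fixed $\C$-linear combination of $\Ra^2$ and $t_2^2$, it is quasi-homogeneous of degree $4$; this part is uniform in $n$.

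For the vanishing $\frac{\partial \Lambda}{\partial t_2}=0$ I would first use that $t_2=s_{21}$ in our chart and dispose of $n=1,2$ directly from the explicit modular vector fields recorded in Example \ref{exam 1}. For $n\geq 3$ the strategy is to read $\Ra^2=\dot t_2=\dot S_\Ra[2,1]$ off of \eqref{eq SdotR}. In the second row of $\Yuk$ (see \eqref{jimbryan}) the only nonzero entry is $\Yuk[2,3]=\Yuk_1$, whence $(\Yuk S)[2,1]=\Yuk_1 s_{31}$; and since $S$ is lower triangular while $\gma(\Ra)[1,1]=s_{21}$ and $\gma(\Ra)[2,1]=0$ (both from \eqref{eq t1dtn+2d} together with \eqref{16/1/2016-1}), one finds $(S\gma(\Ra))[2,1]=s_{21}^2=t_2^2$. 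Thus $\Ra^2=\Yuk_1 s_{31}-t_2^2$, and feeding this into the $n\neq 2$ line of \eqref{eq Lambda} gives the clean expression $\Lambda=-\Yuk_1 s_{31}$.

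It then suffices to verify that $\Yuk_1 s_{31}$ is free of $t_2=s_{21}$. The entry $s_{31}$ occupies the position $(3,1)$, which for $n\geq 3$ obeys the placement inequality for independent parameters, so $s_{31}$ is one of the independent coordinates and is distinct from $t_2$, giving $\frac{\partial s_{31}}{\partial t_2}=0$. Moreover, \eqref{eq yukiodd}--\eqref{eq yukieven} write $\Yuk_1$ in terms of the diagonal entries $s_{ii}$ and the base coordinates $t_1,t_{n+2}$, and \eqref{29/12/2015} exhibits each such diagonal entry as a function of the independent diagonal entries and of $t_1,t_{n+2}$ alone --- never of $s_{21}=t_2$. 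Hence $\frac{\partial \Yuk_1}{\partial t_2}=0$, and therefore $\frac{\partial \Lambda}{\partial t_2}=-\frac{\partial}{\partial t_2}(\Yuk_1 s_{31})=0$.

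The hard part is exactly this last $t_2$-independence of $\Yuk_1 s_{31}$, which forces one to keep track of how the chart on $\T$ is built and of which entries of $S$ are independent. It is also the precise point at which $n=1,2$ deviate: there $s_{31}$ is either absent ($n=1$) or becomes a dependent parameter that genuinely involves $t_2$ ($n=2$), which is what both necessitates the separate treatment of those cases and explains the special constants appearing in the $n=2$ definition of $\Lambda$.
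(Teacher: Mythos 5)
Your proposal is correct and follows essentially the same route as the paper: both read $\Ra^2$ off the $(2,1)$-entry of $\dot S_\Ra=\Yuk S-S\,\gma(\Ra)$, obtaining $\Ra^2=\Yuk_1 s_{31}-t_2^2$, hence $\Lambda=-\Yuk_1 s_{31}$, which is free of $t_2$ since $\Yuk_1$ involves only diagonal entries of $S$ and $t_1,t_{n+2}$. The only (harmless) differences are that the paper checks $n=1,2,3,4$ directly from Example \ref{exam 1} and runs the matrix computation only for $n\geq 5$, where it substitutes $\Yuk_1=t_3^2/t_6$ to get the explicit formula $\Lambda=-t_3^2t_4/t_6$, whereas you run the general argument from $n\geq 3$ onward and keep $\Yuk_1$ abstract.
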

\begin{proof}
  For $n=1,2,3,4$ the modular vector field $\Ra$ has been explicitly stated in Example \ref{exam 1} and one can easily check the truth of the statement. For $n\geq 5$ the component $\Ra^2$ of the modular vector field $\Ra$ corresponds to the $(2,1)$-th entry of the matrix $\dot{S}_\Ra=\Yuk S-S\, \gma(\Ra)$ computed in the proof of Proposition \ref{prop Risqh2} that yields:
\[
\Ra^2(t_1,t_2,\ldots,t_\dt)=\Yuk_1t_4-t_2^2\,,\ \ \textrm{(note that $t_2=s_{21}$ and $t_4=s_{31}$)}.
\]
From \eqref{eq yukiodd} and \eqref{eq yukieven} we get $\Yuk_1=\frac{s_{22}^2}{s_{33}}=\frac{t_3^2}{t_6}$, which implies:
\[
\Ra^2(t_1,t_2,\ldots,t_\dt)=\frac{t_3^2t_4}{t_6}-t_2^2\,.
\]
Hence, for $n\geq 5$ we obtain $\Lambda=-\frac{\ t_3^2t_4}{t_6}$ and the proof is complete.
\end{proof}

\section{Vector field $\rcv$} \label{subsection rcv}
Remember from Section \ref{section introduction} that the Ramanujan vector field $\Ramvf=\frac{1}{12}(t_1^2-t_2)\frac{\partial}{\partial t_1}+\frac{1}{3}(t_1t_2-t_3)\frac{\partial}{\partial t_2}+\frac{1}{2}(t_1t_3-t_2^2)\frac{\partial}{\partial t_3}$ along with $H=2t_1\frac{\partial}{\partial t_1}+4t_2\frac{\partial}{\partial t_2}+6t_3\frac{\partial}{\partial t_3}$ and $F=-12\frac{\partial}{\partial t_1}$ generates $\sl2$ and we have $F(t_1)=-12\neq 0, \ F(t_2)=F(t_3)=0$. In the case of modular vector field, we observed in Theorem \ref{theo 3} that the Lie algebra generated by $\Ra, \rvf, \cvf$ is isomorphic to $\sl2$. According to Example \ref{exam 1}, \eqref{eq cvf even} and \eqref{eq cvf odd}, we find the vector field $\cvf$ for any positive integer $n$ as follows:
\begin{align}
  \cvf &= (1+\delta_2^n)\frac{\partial}{\partial t_2}\,, \ \ {\rm if} \ n=1 \ {\rm or} \ n \ {\rm is \ even},  \label{eq cvf n=1,even} \\
  \cvf &= \frac{\partial}{\partial t_2}-t_{4}\frac{\partial}{\partial t_7} \,, \ \ {\rm if} \ n=3\,,  \label{eq cvf n=3}\\
  \cvf &= \frac{\partial}{\partial t_2}-t_{\dt-2}\frac{\partial}{\partial t_\dt}\,, \ \ {\rm if}  \ n\geq 5 \ {\rm is \ odd}.  \label{eq cvf n=odd>3}
\end{align}
For any $n$ we have $\cvf(t_2)=1+\delta_2^n\neq 0$. If $n=1$ or $n$ is even, then $\cvf(t_j)=0$ for all $1\leq j \leq \dt$ and $j\neq 2$. But if $n\geq 3$ is odd, then, besides $\cvf(t_2)\neq 0$, we have $\cvf(t_7)=-t_4\neq 0$, when $n=3$, and $\cvf(t_\dt)=-t_{\dt-2}\neq 0$, when $n\geq 5$. These will cause problems for our purposes in Section \ref{section RCACY}, when $n\geq 3$ is odd. To overcome these problems we have the following two options.
\begin{description}
  \item[Option 1.] In this option we change the chart of $\T$, but $\Ra, \rvf, \cvf$ stay the same. If for any positive odd integer $n\geq 3$ we set:
  \[
  \tilde{t}_\dt:=\left\{
                 \begin{array}{ll}
                   t_7+t_2t_4, & \hbox{{\rm if} \ $n=3$;} \\
                   t_\dt+t_2t_{\dt-2}, & \hbox{{\rm if} \ $n\geq 5$ \ {is \ odd};}
                 \end{array}
               \right.,
  \]
then it is easy to observe that $\cvf(\tilde{t}_\dt)=0$. According to \eqref{eq rvf odd}, we observe that $\rvf(\tilde{t}_\dt)=2$, which means $\deg(\tilde{t}_\dt)=2$. If instead the chart $(t_1,t_2,\ldots,t_\dt)$ for $\T$, we use the chart $(t_1,t_2,\ldots,t_{\dt-1},\tilde{t}_\dt)$, and set $\rcv:=\Ra=\sum_{j=1}^{\dt}\Ra^j\frac{\partial}{\partial t_j}$, then we have:
{\small
\begin{align}
  \rcv & = \sum_{j=1}^{\dt-1}\rcv^j\frac{\partial}{\partial t_j}+\rcv^\dt\frac{\partial}{\partial \tilde{t}_\dt},  \ {\rm where} \ \rcv^j=\Ra^j, \ j=1,2,\ldots,\dt-1 \ {\rm and} \ \rcv^\dt=\Ra(\tilde{t}_\dt),\\
  \rvf & = \sum_{j=1}^{\dt-1}w_j t_j \frac{\partial}{\partial t_j}+2\tilde{t}_\dt \frac{\partial}{\partial \tilde{t}_\dt}, \ \ {\rm where} \ w_j, \ j=1,2,\ldots,\dt-1, \ {\rm \  are \ as \ before}, \\
  \cvf & = \frac{\partial}{\partial t_2}\,.
\end{align}}
For simplicity, from now on, we denote $\tilde{t}_\dt$ also by $t_d$, but we remember that whenever we use $\rcv$ of this option we consider the chart
$(t_1,t_2,\ldots,t_{\dt-1},\tilde{t}_\dt)$, and $\rcv=\Ra$, $\rvf$, $\cvf$ does not change.
  \item[Option 2.] In this option we change the modular vector field $\Ra$, and consequently $\cvf$, but the chart of $\T$ and $\rvf$ remain the same. For any positive integer $n$ we define:
\begin{equation}\label{eq rcv}
  \rcv:=\Ra+t_2\left( [\Ra,(1+\delta_2^n)\frac{\partial}{\partial t_2}]-\rvf\right)\,.
\end{equation}
Note that if $n=1$ or $n$ is even, then $(1+\delta_2^n)\frac{\partial}{\partial t_2}=\cvf$, which implies $\rcv=\Ra$. But, if $n\geq 3$ is odd, then $\rcv$ is different from $\Ra$.
In the next lemmas we show that $\rcv$ is a quasi-homogeneous vector field of degree $2$, and along with $\rvf$ and $(1+\delta_2^n)\frac{\partial}{\partial t_2}$ forms a copy of $\sl2$.
\end{description}

\begin{rem}
\begin{enumerate}
  \item The vector field $\rcv$ defined in {\rm {\bf Option 1}} is different from the one defined in {\rm {\bf Option 2}} whenever $n\geq 3$ is odd. The reason for denoting them using the same notation is that the main results of this paper which will be proved in Section \ref{section RCACY} hold for both of them, and to avoid repeating the theorems and their proofs we use the same notation.
  \item Since in {\rm {\bf Option 1}} we have $\rcv=\Ra$, we can find the $q$-expansion of their solution components (at least for $n=1,2,3,4$) and we know interesting facts about them, some of which were mentioned in Section \ref{section introduction}. But, we do not know solutions of $\rcv$ given in {\rm {\bf Option 2}} when $n\geq 3$ is odd. In particular, the author tried to find the $q$-expansion of a solution of it for $n=3$, but he did not succeed and it seems that the solutions of $\rcv$ in these cases does not have $q$-expansion around $\infty$.
\end{enumerate}
\end{rem}
Next we state the fundamental lemma of this work, which will be used to prove Theorem \ref{theo 4}. First, we recall that if we have two vector fields $V=\sum_{j=1}^{\dt}V^j\frac{\partial}{\partial t_j}$ and $W=\sum_{j=1}^{\dt}W^j\frac{\partial}{\partial t_j}$, then
\begin{equation}\label{eq lie bracket in a chart}
[V,W]=VW-WV=\sum_{j=1}^{\dt}\left( V(W^j)-W(V^j) \right) \frac{\partial}{\partial t_j}\,.
\end{equation}

\begin{lemm}\label{lemm fund}
{\bf (Fundamental lemma)} The vector field $\rcv$ is a quasi-homogeneous vector field of degree $2$ in the AMSY-Lie algebra $\amsy$ that satisfies:
\begin{equation}\label{eq [D,d/dt2]}
  [\rcv,(1+\delta_2^n)\frac{\partial}{\partial t_2}]=\rvf\,.
\end{equation}
\end{lemm}
\begin{proof} If $\rcv$ is the one given in {\rm {\bf Option 1}}, then $\rcv=\Ra$ and $\cvf=(1+\delta_2^n)\frac{\partial}{\partial t_2}$. Hence, due to Proposition \ref{prop Risqh2} and Theorem \ref{theo 3} the lemma holds.

Now we suppose that $\rcv$ is the one defined in {\rm {\bf Option 2}}.
If $n=1,2,3,4$, then $\Ra,\, \cvf,\, \rvf$ are given explicitly in Example \ref{exam 1}, and one can easily find that the affirmations hold.
If $n\geq5$ is even, then $\rcv=\Ra$ and $\cvf=\frac{\partial}{\partial t_2}$, which yield the results. Suppose that $n\geq5$ is odd. Then by applying \eqref{eq Sdot} to $\Ra_{\gL_{1n}}$ and $\Ra_{\gL_{1(n+1)}}$ we obtain   $\Ra_{\gL_{1n}}=\frac{\partial}{\partial t_{\dt-2}}+t_2\frac{\partial}{\partial t_{\dt}}$ and $\Ra_{\gL_{1(n+1)}}=\frac{\partial}{\partial t_\dt}$.
      Therefore, the relation \eqref{eq Ragab} yields:
\begin{equation}\label{eq 29/1/2019}
[\Ra,\frac{\partial}{\partial t_\dt}]=[\Ra,\Ra_{\gL_{1(n+1)}}]=\Ra_{\gL_{1n}}=\frac{\partial}{\partial t_{\dt-2}}+t_2\frac{\partial}{\partial t_{\dt}}\, .
\end{equation}
If we write $\Ra=\sum_{j=1}^{\dt}\Ra^j\frac{\partial}{\partial t_j}$, then Remark \ref{rem comp vf R} yields  $\Ra^{\dt-2}=-t_\dt-t_2t_{\dt-2}$, from which we get:
\begin{align}
 [\Ra,t_{\dt-2}\frac{\partial}{\partial t_\dt}]&=\Ra(t_{\dt-2})\frac{\partial}{\partial t_\dt}+t_{\dt-2}[\Ra,\frac{\partial}{\partial t_\dt}] \label{eq 28/11/2018}\\
 &\mathop = \limits^{\eqref{eq 29/1/2019}}\Ra^{\dt-2}\frac{\partial}{\partial t_\dt}+t_{\dt-2}\frac{\partial}{\partial t_{\dt-2}}+t_2t_{\dt-2}\frac{\partial}{\partial t_\dt} \nonumber\\
&=t_{\dt-2}\frac{\partial}{\partial t_{\dt-2}}-t_{\dt}\frac{\partial}{\partial t_\dt}\,.\nonumber
\end{align}
Due to \eqref{eq cvf odd} we have $\frac{\partial}{\partial t_2}=\cvf+t_{\dt-2}\frac{\partial}{\partial t_\dt}$, hence
\begin{align}\label{eq rcv2}
\rcv=\Ra+t_2\left( [\Ra,\frac{\partial}{\partial t_2}]-\rvf\right)
&=\Ra+t_2\left( [\Ra,\cvf+t_{\dt-2}\frac{\partial}{\partial t_\dt}]-\rvf\right)\\
&=\Ra+t_2t_{\dt-2}\frac{\partial}{\partial t_{\dt-2}}-t_2t_{\dt}\frac{\partial}{\partial t_\dt}\,.\nonumber
\end{align}
Note that in the last equality of the above equation we used \eqref{eq 28/11/2018} and the fact that $[\Ra,\cvf]=\rvf$. Thus,
\begin{align}\label{}
  [\rcv,\frac{\partial}{\partial t_2}] & = [\Ra,\frac{\partial}{\partial t_2}]+[t_2t_{\dt-2}\frac{\partial}{\partial t_{\dt-2}},\frac{\partial}{\partial t_2}]-[t_2t_{\dt}\frac{\partial}{\partial t_d},\frac{\partial}{\partial t_2}]\nonumber\\
   &=  [\Ra,\frac{\partial}{\partial t_2}]-\frac{\partial}{\partial t_2}(t_2t_{\dt-2})\frac{\partial}{\partial t_{\dt-2}}+
   \frac{\partial}{\partial t_2}(t_2t_{\dt})\frac{\partial}{\partial t_{\dt}}\nonumber\\
   &= [\Ra,\frac{\partial}{\partial t_2}]-t_{\dt-2}\frac{\partial}{\partial t_{\dt-2}}+
   t_{\dt}\frac{\partial}{\partial t_{\dt}}\mathop  = \limits^{\eqref{eq 28/11/2018}}[\Ra,\frac{\partial}{\partial t_2}]-[\Ra,t_{\dt-2}\frac{\partial}{\partial t_\dt}]\nonumber\\
   &=[\Ra,\frac{\partial}{\partial t_2}-t_{\dt-2}\frac{\partial}{\partial t_\dt}]\mathop = \limits^{\eqref{eq cvf odd}} [\Ra,\cvf]=\rvf\,.\nonumber
\end{align}
We know that $\Ra$ is quasi-homogeneous of degree $2$ and $\deg(t_2)=2$, hence \eqref{eq rcv2} implies that $\rcv$ is quasi-homogeneous of degree $2$. In order to get $\rcv\in \amsy$, first observe that $\frac{\partial}{\partial t_\dt}=\Ra_{\gL_{1(n+1)}}\in \amsy$. Hence,
\[
\frac{\partial}{\partial t_2}=\cvf+t_{\dt-2}\frac{\partial}{\partial t_\dt}\in\amsy\,,
\]
which yields $\rcv\in\amsy$, and the proof is complete.
\end{proof}

\begin{coro}\label{cor sl2c D}
The Lie algebra generated by the vector fields $\rcv$, $\rvf$ and $(1+\delta_2^n)\frac{\partial}{\partial t_2}$ in the AMSY-Lie algebra $\amsy\subset \mathfrak{X}(\T)$ is isomorphic to $\sl2$.
\end{coro}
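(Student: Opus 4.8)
The plan is to verify directly that the triple $(\rcv,\rvf,\frac{\partial}{\partial t_2})$ obeys the three defining relations of $\sl2$, with $\rvf$ as the semisimple element and $\rcv$, $\frac{\partial}{\partial t_2}$ as the raising and lowering elements; this mirrors Theorem \ref{theo 3}, where the roles were played by $(\Ra,\rvf,\cvf)$. One of the three relations, namely $[\rcv,\frac{\partial}{\partial t_2}]=\rvf$, is already handed to us by \eqref{eq [D,d/dt2]} in the Fundamental Lemma. It therefore remains only to compute $[\rvf,\rcv]$ and $[\rvf,\frac{\partial}{\partial t_2}]$, and then to check that the resulting three-dimensional span is genuinely isomorphic to $\sl2$.

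The key tool I would isolate first is the general principle that bracketing with the radial vector field $\rvf=\sum_{i=1}^{\dt}w_it_i\frac{\partial}{\partial t_i}$ of \eqref{eq rvf gf} simply reads off the degree of a quasi-homogeneous field. Indeed, if $\H=\sum_j\H^j\frac{\partial}{\partial t_j}$ is quasi-homogeneous of degree $d$, i.e. $\deg(\H^j)=w_j+d$ for every $j$, then combining \eqref{eq lie bracket in a chart}, the identity $\H(w_jt_j)=w_j\H^j$, and Euler's relation $\rvf(f)=\deg(f)\,f$ valid for any quasi-homogeneous $f\in\O_\T$, yields
\begin{align*}
[\rvf,\H]=\sum_{j=1}^{\dt}\bigl(\rvf(\H^j)-w_j\H^j\bigr)\frac{\partial}{\partial t_j}=\sum_{j=1}^{\dt}\bigl((w_j+d)\H^j-w_j\H^j\bigr)\frac{\partial}{\partial t_j}=d\,\H.
\end{align*}
Applying this with $\H=\rcv$, which is quasi-homogeneous of degree $2$ by Lemma \ref{lemm fund}, gives $[\rvf,\rcv]=2\rcv$. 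Moreover $\frac{\partial}{\partial t_2}$ is quasi-homogeneous of degree $-2$: its sole nonzero component is the constant $1$ in the $t_2$-slot, and $\deg(1)=0=w_2+(-2)$ since $w_2=2$; hence $[\rvf,\frac{\partial}{\partial t_2}]=-2\frac{\partial}{\partial t_2}$.

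Assembling the three identities
\[
[\rvf,\rcv]=2\rcv,\qquad [\rvf,\tfrac{\partial}{\partial t_2}]=-2\tfrac{\partial}{\partial t_2},\qquad [\rcv,\tfrac{\partial}{\partial t_2}]=\rvf,
\]
I would conclude that the $\C$-span of $\rcv,\rvf,\frac{\partial}{\partial t_2}$ is closed under the Lie bracket and has exactly the structure constants of $\sl2$. Linear independence of the three generators is automatic: they are eigenvectors of $\mathrm{ad}_{\rvf}$ for the distinct eigenvalues $2,0,-2$ and are all nonzero, and eigenvectors for distinct eigenvalues are independent. Since all three fields lie in $\amsy$ — this was established for $\rcv$ and $\frac{\partial}{\partial t_2}$ within the proof of Lemma \ref{lemm fund}, while $\rvf\in\LA\subset\amsy$ — the copy of $\sl2$ sits inside $\amsy$ as asserted. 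The argument is in essence a bookkeeping assembly of the Fundamental Lemma together with the degree-counting identity above; the only substantive inputs, namely the relation $[\rcv,\frac{\partial}{\partial t_2}]=\rvf$ and the quasi-homogeneity of $\rcv$, have already been supplied by Lemma \ref{lemm fund}, so I expect no real obstacle beyond carefully invoking those facts.
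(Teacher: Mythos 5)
Your proposal is correct, and it reaches the three $\sl2$ relations by a route that differs in a meaningful way from the paper's. Both proofs take $[\rcv,\frac{\partial}{\partial t_2}]=\rvf$ straight from Lemma \ref{lemm fund}, but for the remaining two brackets the paper proceeds by case analysis: it computes $[\rvf,\frac{\partial}{\partial t_2}]=-2\frac{\partial}{\partial t_2}$ directly from \eqref{eq rvf even}/\eqref{eq rvf odd} and \eqref{eq lie bracket in a chart}, and for $[\rvf,\rcv]=2\rcv$ it treats $n=1,2,3,4$ and even $n\geq 5$ by the explicit formulas in Example \ref{exam 1} and the identity $\rcv=\Ra$, while for odd $n\geq 5$ it invokes the decomposition \eqref{eq rcv2}, the relation $[\rvf,\Ra]=2\Ra$ from Theorem \ref{theo 3}, and a direct bracket computation on the correction terms $t_2t_{\dt-2}\frac{\partial}{\partial t_{\dt-2}}-t_2t_{\dt}\frac{\partial}{\partial t_\dt}$. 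You instead isolate the Euler-type principle that $\mathrm{ad}_{\rvf}$ acts as multiplication by the degree on any quasi-homogeneous vector field, and then get both brackets uniformly: $[\rvf,\rcv]=2\rcv$ from the degree-$2$ quasi-homogeneity of $\rcv$ (which is precisely the other conclusion of Lemma \ref{lemm fund}, a conclusion the paper proves but does not exploit here), and $[\rvf,\frac{\partial}{\partial t_2}]=-2\frac{\partial}{\partial t_2}$ from $\deg(1)=0=w_2-2$. What your approach buys is uniformity in $n$ — no case splitting, no reliance on the explicit form \eqref{eq rcv2} or on Example \ref{exam 1} — at the modest cost of justifying Euler's identity $\rvf(f)=\deg(f)f$ for quasi-homogeneous elements of $\O_\T$ (routine for the localized polynomial ring \eqref{eq rfs}, since the identity passes to quotients of quasi-homogeneous polynomials). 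You also supply two points the paper leaves implicit: linear independence of the three generators via the distinct $\mathrm{ad}_{\rvf}$-eigenvalues $2,0,-2$, and membership of all three fields in $\amsy$; this is a small but genuine gain in completeness.
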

\begin{proof}
  It suffices to show that $[\rcv,\frac{\partial}{\partial t_2}]=\rvf,\ [\rvf,\rcv]=2\rcv,\ [\rvf,\frac{\partial}{\partial t_2}]=-2\frac{\partial}{\partial t_2}$. The truth of the first bracket is guaranteed by Lemma \ref{lemm fund}, and the last bracket follows from a simple computation after using \eqref{eq rvf even} or \eqref{eq rvf odd}, and \eqref{eq lie bracket in a chart}. To demonstrate the second bracket $[\rvf,\rcv]=2\rcv$, the same argument given in the proof of Lemma \ref{lemm fund} works perfectly when $\rcv$ is the vector field given in {\rm {\bf Option 1}}, and when $\rcv$ is the one defined in {\rm {\bf Option 2}} for the cases $n=1,2,3,4$ or for even integers $n\geq 5$. For odd integers $n\geq 5$, we first use \eqref{eq rcv2} to obtain:
  \[
  [\rvf,\rcv]=[\rvf,\Ra]+[\rvf,t_2t_{\dt-2}\frac{\partial}{\partial t_{\dt-2}}-t_2t_{\dt}\frac{\partial}{\partial t_\dt}].
  \]
  Then the statement follows from the fact $[\rvf,\Ra]=2\Ra$ given in Theorem \ref{theo 3} and using \eqref{eq lie bracket in a chart} for $\rvf$ stated in \eqref{eq rvf odd}.
\end{proof}

\section{Rankin-Cohen algebras for \cy (quasi-)modular forms}\label{section RCACY}
Let us suppose that $\t_1,\t_2,\ldots,\t_{\dt}$ denote the components of a solution of the vector field $\rcv$, where $\rcv$ is the vector field given either in {\rm {\bf Option 1}} or in {\rm {\bf Option 2}} of Section \ref{subsection rcv}. The reader should take care to differ the notations
$\t_1,\t_2,\ldots,\t_{\dt}$ which stand for solution components of $\rcv$ from the notations $t_1,t_2,\ldots,t_{\dt}$ which are used for the coordinate charts  of $\T$. Nevertheless, any solution component $\t_i$ is associated with the coordinate $t_i$. We define the \emph{space of \cy quasi-modular forms} $\cyqmfs$ and  the \emph{space of \cy modular forms} $\cymfs$, respectively, as follows:
\begin{align}\label{}
  \cyqmfs &:=\C[\t_1,\t_2,\t_3,\ldots,\t_{\dt},\frac{1}{\t_{n+2}(\t_{n+2}-\t_1^{n+2})\check{\t}}]\,, \label{eq cyqmfs} \\
  \cymfs & :=\C[\t_1,\widehat{\ \t_2},\t_3,\t_4,\ldots,\t_{\dt},\frac{1}{\t_{n+2}(\t_{n+2}-\t_1^{n+2})\check{\t}}]\,, \label{eq cymfs}
\end{align}
in which $\check{\t}$ is associated with $\check{t}$ given in \eqref{eq ems} or \eqref{eq rfs} and the symbol $\widehat{\ \t_2}$ means that the component $\t_2$ is omitted, i.e.,  $\t_2\notin \cymfs$. Indeed, we have $\cyqmfs=\cymfs[\t_2]$ and in our generalization the \cy quasi-modular form $\t_2$ has the role of the quasi-modular form $E_2$ in the theory of quasi-modular forms.  Let us attach to any solution component $\t_i$, $1\leq i \leq \dt$, the weight $\deg(\t_i)=w_i$, in which the non-negative integers $w_i$'s are given in \eqref{eq rvf gf}. For any integer $r\in \Z$ we define $\cyqmfs_r$ and $\cymfs_r$ to be the $\C$-vector spaces generated by  $\{\,f\in \cyqmfs\, | \, \deg(f)=r\,\}$ and $\{\,f\in \cymfs\, | \, \deg(f)=r\,\}$, respectively. Note that any constant in $\C$ is considered as a weight zero \cy (quasi-)modular form. Therefore, elements of $\cyqmfs_r$ and $\cymfs_r$ are \cy quasi-modular forms and \cy modular forms of weight $r$, respectively. In particular, $\t_2$ is a \cy quasi-modular form of weight $2$, see Remark \ref{rem weights}, and the other $\t_j$'s, $1\leq j\leq \dt$ and $j\neq2$, are \cy modular forms of weight $w_j$. In particular we have:
\begin{equation}\label{eq cyqmfs and cymfs}
  \cyqmfs=\bigoplus_{r\in \Z}\cyqmfs_r\ \ \ \textrm{and} \ \ \ \cymfs=\bigoplus_{r\in \Z}\cymfs_r\,.
\end{equation}
Thus, $\cyqmfs$ and $\cymfs$ are commutative and associative graded algebras on $\C$.

\begin{nota}
From now on $\mdo, \, \rdo$ and $\cdo$ refer to the differential operators on $\cyqmfs$ induced by the vector fields $\Ra,\, \rvf$ and $\cvf$, respectively, in which we substitute the coordinate chart $t_j$, $1\leq j \leq \dt$, by the solution component $\t_j$ and $\frac{\partial}{\partial t_j}$ by the partial derivation $\frac{\partial}{\partial \t_j}$. For example, if $\Ra=\sum_{j=1}^{\dt}\Ra^j(t_1,t_2,\ldots,t_d)\frac{\partial}{\partial t_j}$, with $\Ra^j(t_1,t_2,\ldots,t_d)\in \O_\T$, then $\mdo=\sum_{j=1}^{\dt}\Ra^j(\t_1,\t_2,\ldots,\t_d)\frac{\partial}{\partial \t_j}$. We consider the Lie bracket of the such obtained differential operators the same as the Lie bracket of the associated vector fields. Hence, due to Theorem \ref{theo 3} we get:
\[
[\mdo,\cdo]=\rdo \ , \ \ [\rdo,\mdo]=2\mdo \ , \ \ [\rdo,\cdo]=-2\cdo
\, .
\]
\end{nota}
We recall that, for an integer $d$, a degree $d$ differential operator $D$ on $\cyqmfs$, denoted  by $D:\cyqmfs_\ast\to \cyqmfs_{\ast+d}$, is a differential operator that satisfies $D(\cyqmfs_r)\subseteq \cyqmfs_{r+d}$ for any positive integer $r$. Indeed, if we can write $D=\sum_{j=1}^{\dt}D^j\frac{\partial}{\partial \t_j}$, with $D^j\in \cyqmfs$, then $D$ has degree $d$ provided $\deg(D^j)-w_j=d$ for any $1\leq j\leq \dt$. A degree $d$ differential operator on $\cymfs$ is defined analogously.

\begin{defi}
We define the derivation $\rcdo$ on $\cyqmfs$ to be the differential operator induced by the vector field $\rcv$. In fact, $\rcdo$ is as follows:
\begin{equation}\label{eq rcdo}
  \rcdo:=\left\{
           \begin{array}{ll}
             \mdo+\t_2([\mdo,(1+\delta_2^n)\frac{\partial}{\partial \t_2}]-\rdo), & \hbox{if $\rcv \in$ {\bf Option 2}, and $n\geq 3$ is odd;} \\
             \mdo, & \hbox{otherwise;}
           \end{array}
         \right. ,
\end{equation}
By the \emph{\rs} $\rsdo$ on $\cyqmfs$ we mean the differential operator that on the generators of $\cyqmfs$  is defined as follows:
\begin{align}
   & \rsdo f:=\rcdo f+(1-\frac{1}{2}\delta_2^n)r\t_2 f\,, \ \  \forall f\in \cyqmfs_r \ {\rm and} \ \forall r\in \Z. \label{eq rsdo}
\end{align}
\end{defi}

We would like that the derivation $\rcdo$ and the \rs $\rsdo$  behave the
same as the usual derivation \eqref{eq qmf der} and the Ramanujan-Serre derivation \eqref{eq rsd} of the classical quasi-modular form theory, respectively.
In the following example we state the derivations $\rcdo$ and $\rsdo$ explicitly for $n=1,2,3,4$.

\begin{exam}\label{exam 1} {\rm
In \cite{younes2} we found $\Ra, \rvf,\cvf$ explicitly for $n=1,2,3,4$. In these cases, we obtain the derivation $\rcdo$ and the \rs $\rsdo$ as follows:
\begin{itemize}
  \item {\bf $n=1$.}
  \begin{align}
&\Ra=(-t_1t_2-9(t_1^3-t_3))\frac{\partial}{\partial
t_1}+(81t_1(t_1^3-t_3)-t_2^2)\frac{\partial}{\partial
t_2}+(-3t_2t_3)\frac{\partial}{\partial t_3}\,, \\
 &\rvf=t_1\frac{\partial}{\partial
t_1}+2t_2\frac{\partial}{\partial
t_2}+3t_3\frac{\partial}{\partial t_3},\label{eq rvf n=1}\\
&\cvf=\frac{\partial}{\partial t_2}\, . \label{eq cvf1}
  \end{align}
  By definition, the vector field \eqref{eq rvf n=1} implies $\deg(\t_1)=1,\, \deg(\t_2)=2$ and $\deg(\t_3)=3$. Since $[\Ra,\cvf]=\rvf$, we observe that:
  \begin{align}
    \rcdo & =\mdo\,, \\
    \rsdo & =-9(\t_1^3-\t_3)\frac{\partial }{\partial \t_1}+(81\t_1(\t_1^3-\t_3)+\t_2^2)\frac{\partial}{\partial
\t_2}\,.
  \end{align}
If we let $\rsdo$ acts just on $\cymfs$, then we get:
\[
 \rsdo  =-9(\t_1^3-\t_3)\frac{\partial }{\partial \t_1}\,.
\]
  \item {\bf $n=2$.}
\begin{align}
\Ra&=(t_3-t_1t_2)\frac{\partial}{\partial
t_1}+(2t_1^2-\frac{1}{2}t_2^2)\frac{\partial}{\partial
t_2}+(-2t_2t_3+8t_1^3)\frac{\partial}{\partial t_3}+(-4t_2t_4)\frac{\partial}{\partial t_4}\,,\\
\rvf&=2t_1\frac{\partial}{\partial
t_1}+2t_2\frac{\partial}{\partial
t_2}+4t_3\frac{\partial}{\partial t_3}+8t_4\frac{\partial}{\partial t_4},\label{eq rvf n=2}\\
\cvf&=2\frac{\partial}{\partial t_2}\,,
\end{align}
where the polynomial equation $t_3^2=4(t_1^4-t_4)$ holds
among $t_i$'s. From \eqref{eq rvf n=2} we get $\deg(\t_1)=2, \, \deg(\t_2)=2, \, \deg(\t_3)=4$ and $\deg(\t_4)=8$. Hence, due to \eqref{eq rcdo} and \eqref{eq rsdo} we find:
  \begin{align}
    \rcdo & =\mdo\,, \\
    \rsdo & =\t_3\frac{\partial }{\partial \t_1}+(2\t_1^2+\frac{1}{2}\t_2^2)\frac{\partial}{\partial
\t_2}+8\t_1^3\frac{\partial }{\partial \t_3}\,.
  \end{align}
  In the case that $\rsdo$ is considered on $\cymfs$ we have:
  \[
  \rsdo =\t_3\frac{\partial }{\partial \t_1}+8\t_1^3\frac{\partial }{\partial \t_3}\,.
  \]
  \item {\bf $n=3$.}
\begin{align}
\Ra&=(t_3-t_1t_2)\frac{\partial}{\partial
t_1}+\frac{t_3^3t_4-5^4t_2^2(t_1^5-t_5)}{5^4(t_1^5-t_5)}\,\frac{\partial}{\partial t_2} \label{eq mvf R3}\\
&+\frac{t_3^3t_6-3\times5^4t_2t_3(t_1^5-t_5)}{5^4(t_1^5-t_5)}\,\frac{\partial}{\partial
t_3}  +(-t_2t_4-t_7)\frac{\partial}{\partial t_4}\nonumber\\
&+(-5t_2t_5)\frac{\partial}{\partial
t_5}+(5^5t_1^3-t_2t_6-2t_3t_4)\frac{\partial}{\partial
t_6}+(-5^4t_1t_3-t_2t_7)\frac{\partial}{\partial t_7}\,, \nonumber\\
\rvf&=t_1\frac{\partial}{\partial t_1}+2t_2\frac{\partial}{\partial
t_2}+3t_3\frac{\partial}{\partial t_3}+5t_5\frac{\partial}{\partial
t_5}+t_6\frac{\partial}{\partial t_6}+2t_7\frac{\partial}{\partial t_7}\,, \\
\cvf&=\frac{\partial}{\partial t_2}-t_4\frac{\partial}{\partial
t_7}\,.
\end{align}
We obtain $\deg(\t_1)=1, \ \deg(\t_2)=2, \ \deg(\t_3)=3, \ \deg(\t_4)=0, \ \deg(\t_5)=5, \ \deg(\t_6)=1, \ \deg(\t_7)=2$.
For $\rcv$ given in {\bf Option 1} remember that we substitute the coordinate $t_7$ by:
 \[
  \tilde{t}_7:=t_7+t_2t_4.
 \]
 from which we obtain:
 \begin{align*}
  &\Ra(\tilde{t}_7)=-5^4t_1t_3+\frac{t_3^3t_4^2}{5^4(t_1^5-t_5)}-2t_2\tilde{t}_7.
 \end{align*}
Note that in this case $\t_7$ is the component of a solution of $\rcv$ associated with coordinate $\tilde t_7$.
Hence, we get the derivation $\rcdo$ on $\cyqmfs$ as follows:
\begin{align*}
\rcdo=\mdo&=\Big(\t_3-\t_2\t_1\Big)\frac{\partial}{\partial
\t_1}+\left(\frac{\t_3^3\t_4}{5^4(\t_1^5-\t_5)}-\t_2^2\right)\,\frac{\partial}{\partial \t_2} \label{eq mvf R3}\\
&+\left(\frac{\t_3^3\t_6}{5^4(\t_1^5-\t_5)}-3\t_2\t_3 \right)\,\frac{\partial}{\partial
\t_3}  -{\t}_7\,\frac{\partial}{\partial \t_4}-5\t_2\t_5\,\frac{\partial}{\partial \t_5}\nonumber\\
&+\Big(5^5\t_1^3-2\t_3\t_4-\t_2\t_6\Big)\frac{\partial}{\partial
\t_6}+\Big(-5^4\t_1\t_3+\frac{\t_3^3\t_4^2}{5^4(\t_1^5-\t_5)}-2\t_2{\t}_7\Big)\frac{\partial}{\partial {\t}_7}\,, \nonumber
\end{align*}
and we obtain $\rsdo$ on $\cymfs$ as follows:
\begin{equation*}
   \rsdo =\t_3\frac{\partial}{\partial \t_1}+\frac{\t_3^3\t_6}{5^4(\t_1^5-\t_5)}\,\frac{\partial}{\partial
\t_3} -\t_7\frac{\partial}{\partial \t_4}+(5^5\t_1^3-2\t_3\t_4)\frac{\partial}{\partial
\t_6}-\left(5^4\t_1\t_3-\frac{\t_3^3\t_4^2}{5^4(\t_1^5-\t_5)}\right)\frac{\partial}{\partial \t_7} \,.
\end{equation*}
If $\rcv$ is the vector field given in {\bf option 2}, then we get $\rcdo:\cyqmfs\to \cyqmfs$  as follows:
\begin{align*}
\rcdo&=\Big(\t_3-\t_2\t_1\Big)\frac{\partial}{\partial
\t_1}+\left(\frac{\t_3^3\t_4}{5^4(\t_1^5-\t_5)}-\t_2^2\right)\,\frac{\partial}{\partial \t_2} \label{eq mvf R3}\\
&+\left(\frac{\t_3^3\t_6}{5^4(\t_1^5-\t_5)}-3\t_2\t_3 \right)\,\frac{\partial}{\partial
\t_3}  -{\t}_7\,\frac{\partial}{\partial \t_4}-5\t_2\t_5\,\frac{\partial}{\partial \t_5}\nonumber\\
&+\Big(5^5\t_1^3-2\t_3\t_4-\t_2\t_6\Big)\frac{\partial}{\partial
\t_6}+\Big(-5^4\t_1\t_3-2\t_2{\t}_7\Big)\frac{\partial}{\partial {\t}_7}\,, \nonumber
\end{align*}
and we obtain $\rsdo:\cymfs\to \cymfs$ as follows:
\begin{equation}\label{}
   \rsdo =\t_3\frac{\partial}{\partial \t_1}+\frac{\t_3^3\t_6}{5^4(\t_1^5-\t_5)}\,\frac{\partial}{\partial
\t_3} -\t_7\frac{\partial}{\partial \t_4}+(5^5\t_1^3-2\t_3\t_4)\frac{\partial}{\partial
\t_6}-5^4\t_1\t_3\frac{\partial}{\partial \t_7} \,.
\end{equation}

  \item {\bf $n=4$.}
\begin{align}
\Ra&=(t_3-t_1t_2)\frac{\partial}{\partial
t_1}+\frac{6^{-2}t_3^2t_4t_8-t_1^6t_2^2+t_2^2t_6}{t_1^6-t_6}\,\frac{\partial}{\partial
t_2} \label{eq mvf4} \\ &+\frac{6^{-2}t_3^2t_5t_8-3t_1^6t_2t_3+3t_2t_3t_6}{t_1^6-t_6}\,\frac{\partial}{\partial
t_3}
+\frac{-6^{-2}t_3^2t_7t_8-t_1^6t_2t_4+t_2t_4t_6}{t_1^6-t_6}\,\frac{\partial}{\partial
t_4} \nonumber \\ &+\frac{6^{-2}t_3t_5^2t_8-4t_1^6t_2t_5-2t_1^6t_3t_4+5t_1^4t_3t_8+4t_2t_5t_6+2t_3t_4t_6}{2(t_1^6-t_6)}\,\frac{\partial}{\partial
t_5} \nonumber
\\&+(-6t_2t_6)\,\frac{\partial}{\partial
t_6}+\frac{6^{-2}t_4^2-t_1^2}{2\times
6^{-2}}\,\frac{\partial}{\partial
t_7}+\frac{-3t_1^6t_2t_8+3t_1^5t_3t_8+3t_2t_6t_8}{t_1^6-t_6}\,\frac{\partial}{\partial
t_8}\,,
 \nonumber\\
\rvf&=t_1\frac{\partial}{\partial t_1}+2t_2\frac{\partial}{\partial
t_2}+3t_3\frac{\partial}{\partial t_3}+t_4\frac{\partial}{\partial
t_4}+2t_5\frac{\partial}{\partial
t_5}+6t_6\frac{\partial}{\partial t_6}+3t_8\frac{\partial}{\partial t_8}\,, \label{eq rvf4}\\
\cvf&=\frac{\partial}{\partial t_2}\,, \label{eq cvf4}
\end{align}
where the equation $t_8^2=36(t_1^6-t_6)$ holds among $t_i$'s.  Analogous to the pervious cases we have $\deg(\t_1)=1, \ \deg(\t_2)=2, \ \deg(\t_3)=3, \ \deg(\t_4)=1, \ \deg(\t_5)=2, \ \deg(\t_6)=6, \ \deg(\t_7)=0, \ \deg(\t_8)=3$. Due to \eqref{eq rcdo} we find:
\begin{equation}\label{}
 \rcdo = \mdo\,
 \end{equation}
and \eqref{eq rsdo} yields the \rs on $\cymfs$ as follows:
\begin{align}
\rsdo &=\t_3\frac{\partial}{\partial \t_1} +\frac{6^{-2}\t_3^2\t_5t_8}{\t_1^6-\t_6}\,\frac{\partial }{\partial\t_3}
-\frac{6^{-2}\t_3^2\t_7\t_8}{\t_1^6-\t_6}\,\frac{\partial }{\partial
\t_4}  \\ &+\frac{6^{-2}\t_3\t_5^2\t_8-2\t_1^6\t_3\t_4+5\t_1^4\t_3\t_8+2\t_3\t_4\t_6}{2(\t_1^6-\t_6)}\,\frac{\partial}{\partial
\t_5}\nonumber \\ &+\frac{6^{-2}\t_4^2-\t_1^2}{2\times
6^{-2}}\,\frac{\partial }{\partial\t_7}+\frac{3\t_1^5\t_3\t_8}{\t_1^6-\t_6}\,\frac{\partial }{\partial \t_8}\,.\nonumber
\end{align}
\end{itemize}}
\end{exam}

\begin{rem}\label{rem pumbi1}
\begin{enumerate}
  \item If we look closely to all cases stated in Example \ref{exam 1} we find out that the derivation $\rcdo$ and the \rs $\rsdo$ have degree $2$. Besides these, the \rs $\rsdo$ sends any element of $\cymfs$ to another element of $\cymfs$. More precisely, the same as what we mentioned for the Ramanujan-Serre derivation given in \eqref{eq rsd2}, in all the above cases we observe that for any $f\in\cymfs_r$ the term $(1-\frac{1}{2}\delta_2^n)r\t_2f$ in \eqref{eq rsdo}  kills all the terms including $\t_2$ in $\rcdo f$ which implies $\rsdo f\in \cymfs_{r+2}$, and consequently $\cymfs$ is closed under $\rsdo$.
      All these facts hold for any positive integer $n$ which are stated in Theorem \ref{theo 4}.
  \item In Example \ref{exam 1} we stated the derivation $\rcdo$ explicitly in the cases $n=1,2,3,4$. For $n\geq 5$, due to the proof of Lemma \ref{lemm fund}, we can state $\rcdo$ explicitly as follows:
       \begin{itemize}
         \item if $\rcv$ is the vector field given in {\bf Option 2} and $n\geq 5$ is odd, then $\rcdo=\mdo+\t_2\t_{\dt-2}\frac{\partial}{\partial \t_{\dt-2}}-\t_2\t_\dt\frac{\partial}{\partial \t_{\dt}}$\,,
         \item otherwise, we have $\rcdo=\mdo$.
       \end{itemize}
\end{enumerate}
\end{rem}

Now we are in the situation that we can present the proof of Theorem \ref{theo 4}.\\

{\bf Proof of Theorem \ref{theo 4}.}
\begin{enumerate}
  \item Due to Lemma \ref{lemm fund} the proof is straightforward, since the differential operator $\rcdo$ is induced by the vector field $\rcv$ which is a quasi-homogeneous vector field of degree $2$.
  \item First note that according to Remark \ref{rem weights} we always have $\deg(\t_2)=w_2=2$. Hence, from part 1 and \eqref{eq rsdo} we deduce that $\rsdo$ is a degree $2$ differential operator. To prove that for all $f\in \cymfs$ we get $\rsdo f\in \cymfs$, it is enough to observe that for all integers $r$ and for all $f\in \cymfs_r$ we have $\partial f\in \cymfs_{r+2}$, which is equivalent to:
\begin{align*}
    \partial \t_j \in \cymfs_{w_j+2}\,, \  \ \forall j\neq 2 ,
   & \Leftrightarrow (1+\delta_2^n)\frac{\partial}{\partial \t_2}(\partial \t_j)=0\,, \  \ \forall j\neq 2 , \\
   & \Leftrightarrow (1+\delta_2^n)\frac{\partial}{\partial \t_2}(\rcdo \t_j+(1-\frac{\delta_2^n}{2})w_j\t_2\t_j)=0\,, \  \ \forall j\neq 2 ,\\
   & \Leftrightarrow (1+\delta_2^n)\frac{\partial}{\partial \t_2}(\rcdo \t_j)=-w_j\t_j\,, \  \ \forall j\neq 2 ,\\
   & \Leftarrow \sum_{j=1}^{\dt}(1+\delta_2^n)\frac{\partial}{\partial \t_2}(\rcdo \t_j)\frac{\partial}{\partial \t_j}=- \sum_{j=1}^{\dt}w_j\t_j \frac{\partial}{\partial \t_j}=-\rdo\,, \\
   & \Leftrightarrow [(1+\delta_2^n)\frac{\partial}{\partial \t_2},\rcdo]=-\rdo \, ,\\
   & \Leftrightarrow [\rcdo,(1+\delta_2^n)\frac{\partial}{\partial \t_2}]=\rdo \,.
\end{align*}
The last affirmation is valid due to Lemma \ref{lemm fund}, which completes the proof. \hfill\(\square\)\\
\end{enumerate}

Next, to use Proposition \ref{prop crca}, we need the \cy quasi-modular forms of positive weight. Hence, we consider the spaces of \cy quasi-modular forms  $\cyqmfs^{>0}$ and \cy modular forms $\cymfs^{>0}$ of positive weight as follows:
\begin{align}
  \cyqmfs^{>0}:=\bigoplus_{r\geq 0}\cyqmfs_r \qquad , \qquad\cymfs^{>0}:=\bigoplus_{r\geq 0}\cymfs_r\,, \label{eq cymfs pw}
\end{align}
in which we suppose that $\cyqmfs_0=\cymfs_0=\C$.
Thus, the space of \cy quasi-modular forms of positive weight $\cyqmfs^{>0}$ is a commutative and associative graded algebra with unit over the field $\C$ together with the derivation $\rcdo: \cyqmfs^{>0}_\ast \to \cyqmfs^{>0}_{\ast+2}$ of degree 2. Therefore, due to Remark \ref{rem srca}, $(\cyqmfs^{>0},[\cdot,\cdot]_{\rcdo,\ast})$ is a standard Rankin-Cohen, and hence a Rankin-Cohen algebra. We call $[\cdot,\cdot]_{\rcdo,\ast}$ the \emph{Rankin-Cohen bracket for \cy quasi-modular forms}, and for any non-negative integers $k,r,s$ it is defined as
\begin{equation}\label{eq rcb r1}
  [f,g]_{\rcdo,k}:=\sum_{i+j=k}(-1)^j\binom{k+r-1}{i}\binom{k+s-1}{j}f^{(j)}g^{(i)},\ \forall f\in \cyqmfs_r \,, \   \forall g\in \cyqmfs_s\,,
\end{equation}
where $f^{(j)}=\rcdo^jf$ and $g^{(j)}=\rcdo^jg$ refer to the $j$-th derivative of $f$ and $g$ under $\rcdo$, respectively. It is evident that $[f,g]_{\rcdo,k}\in \cyqmfs_{r+s+2k}$.
Next, we demonstrate Theorem \ref{theo 5} which shows that the space of \cy modular forms of positive weight $\cymfs^{>0}$ is closed under the Rankin-Cohen bracket for \cy quasi-modular forms given
in \eqref{eq rcb r1}.
\bigskip\\
{\bf Proof of Theorem \ref{theo 5}.}\\
The idea of the proof is to use Proposition \ref{prop crca} and its proof. To this end, first note that according to the part 2 of Theorem \ref{theo 4} the \rs $\partial:\cymfs^{>0}_\ast\to \cymfs^{>0}_{\ast+2}$ is a degree 2 differential operator. If we set
$\Lambda=\Lambda(\t_1,\t_2,\ldots,\t_\dt)$, where $\Lambda$ is given in Lemma \ref{lemm R^2}, then the same lemma yields $\Lambda\in\cymfs_4$. Therefore, from Proposition \ref{prop crca} we get that $(\cymfs^{>0},[\cdot,\cdot]_{\partial,\Lambda,\ast})$, where the $k$-th bracket $[\cdot,\cdot]_{\partial,\Lambda,k}\,,\ k\geq 0,$ is given by \eqref{eq crcb}, is a canonical Rankin-Cohen algebra. On the other hand, by letting
  $
\lambda=(\frac{1}{2}\delta_2^n-1)\t_2\,,
$
from \eqref{eq rsdo} we obtain
\begin{equation}\label{eq pumbi2}
\rcdo f=\rsdo f+r\lambda f\,,\ \forall f\in \cymfs_r\,.
\end{equation}
Furthermore, if we write $\rcdo=\sum_{j=1}^{\dt}\rcv^j\frac{\partial}{\partial \t_j}$, with $\rcv^j\in \cyqmfs$, then
\begin{equation}\label{eq Dlambda}
\rcdo(\lambda)=(\frac{1}{2}\delta_2^n-1)\rcdo(\t_2)=(\frac{1}{2}\delta_2^n-1)\rcv^2\,.
\end{equation}
Considering $\mdo=\sum_{j=1}^{\dt}\Ra^j\frac{\partial}{\partial \t_j}$, with $\Ra^j\in \cyqmfs$, the part 2 of Remark \ref{rem pumbi1} yields $\rcv^2=\Ra^2$. This fact along with \eqref{eq Dlambda} and \eqref{eq Lambda} implies:
\begin{equation}\label{eq Dlambda2}
\rcdo(\lambda)=\Lambda+\lambda^2\,.
\end{equation}
The relations \eqref{eq pumbi2} and \eqref{eq Dlambda2} show that \eqref{eq der zag} is satisfied. Hence, from the proof of Proposition \ref{prop crca} we obtain
$[\cdot,\cdot]_{\partial,\Lambda,\ast}=[\cdot,\cdot]_{\rcdo,\ast}$ (see \eqref{eq []=[]}). Finally, since $\cymfs^{>0}$ is closed
under $[\cdot,\cdot]_{\partial,\Lambda,\ast}$, we conclude that $\cymfs^{>0}$ is closed under $[\cdot,\cdot]_{\rcdo,\ast}$, and this finishes the proof of the theorem.\hfill\(\square\)\\

In particular, Theorem \ref{theo 5} implies that $(\cymfs^{>0},[\cdot,\cdot]_{\rcdo,\ast})$ is a sub Rankin-Cohen algebra of $(\cyqmfs^{>0},[\cdot,\cdot]_{\rcdo,\ast})$.

\begin{coro}
The \rc bracket for \cy quasi-modular forms $[\cdot , \cdot]_{\rcdo,\ast}$  endows $\cymfs^{>0}$ with a canonical \rc algebra structure.
\end{coro}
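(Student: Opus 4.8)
The plan is to obtain the Corollary as an immediate consequence of Theorem \ref{theo 5} together with the subalgebra remark that precedes it, so that essentially all the work has already been done. Recall from Section \ref{section RCa} that to furnish $\cymfs^{>0}=\bigoplus_{r\geq 0}\cymfs_r$ with a Rankin-Cohen algebra structure I must exhibit it as a graded commutative and associative $\C$-algebra with unit, carrying bilinear brackets $[\cdot,\cdot]_k\colon\cymfs_r\otimes\cymfs_s\to\cymfs_{r+s+2k}$ that respect the grading and satisfy the identities \eqref{eq ap1}--\eqref{eq ap11} together with all the further algebraic relations among the universal brackets \eqref{eq rcb}. The grading, commutativity, associativity, and the unit $1\in\cymfs_0=\C$ are built into the definition of $\cymfs^{>0}$ in \eqref{eq cymfs pw}, and the bracket $[\cdot,\cdot]_\ast$ has already been introduced on $\cyqmfs^{>0}$ in \eqref{eq rcb r1}; thus the only point left to settle is that these brackets restrict to $\cymfs^{>0}$ and continue to satisfy the Rankin-Cohen identities there.

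First I would record that $\cymfs^{>0}$ is stable under every bracket: this is exactly Theorem \ref{theo 5}, which gives $[f,g]_k\in\cymfs_{r+s+2k}$ for $f\in\cymfs_r$ and $g\in\cymfs_s$. Consequently $(\cymfs^{>0},[\cdot,\cdot]_\ast)$ is a graded $\C$-subspace of the Rankin-Cohen algebra $(\cyqmfs^{>0},[\cdot,\cdot]_\ast)$ that contains the unit and is closed under all the operations, that is, a Rankin-Cohen subalgebra. Since the axioms \eqref{eq ap1}--\eqref{eq ap11}, and the remaining ones, are universal identities in the bracket operations, they hold whenever each argument is drawn from the subspace $\cymfs^{>0}$; hence every identity valid in $\cyqmfs^{>0}$ restricts verbatim to one valid in $\cymfs^{>0}$. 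This already endows $\cymfs^{>0}$ with a Rankin-Cohen algebra structure.

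A more informative route, which I would also indicate, reads the conclusion off the proof of Theorem \ref{theo 5} directly. There it is shown that on $\cymfs^{>0}$ one has $[\cdot,\cdot]_\ast=[\cdot,\cdot]_{\partial,\Lambda,\ast}$, where $\partial$ is the degree-$2$ Ramanujan-Serre type derivation of Theorem \ref{theo 4}(2), $\Lambda\in\cymfs_4$ is the element of Lemma \ref{lemm R^2}, and the defining relations \eqref{eq der zag} are verified through $\rcdo f=\partial f+r\lambda f$ and $\rcdo(\lambda)=\Lambda+\lambda^2$ with $\lambda=(\frac{1}{2}\delta_2^n-1)\t_2$. By Proposition \ref{prop crca} the pair $(\cymfs^{>0},[\cdot,\cdot]_{\partial,\Lambda,\ast})$ is therefore a \emph{canonical} Rankin-Cohen algebra; rewriting its brackets as $[\cdot,\cdot]_\ast$ identifies the structure sought by the Corollary and, as a bonus, exhibits it as canonical with distinguished derivation $\partial$ and element $\Lambda$.

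I do not expect any serious obstacle, since the genuine algebraic content---the degree-$2$ property of $\partial$, the closure of $\cymfs^{>0}$ under the brackets, and the compatibility \eqref{eq der zag}---was already carried out in Theorems \ref{theo 4} and \ref{theo 5}. The only care needed is the routine verification that $\cymfs^{>0}$ literally meets the structural clauses in the definition of a Rankin-Cohen algebra, namely that its degree-zero part is $\C\cdot 1$ (fixed by the convention in \eqref{eq cymfs pw}) and that the brackets land in the prescribed graded pieces (immediate from Theorem \ref{theo 5}); both follow directly from the definitions and the inclusion $\cymfs^{>0}\subseteq\cyqmfs^{>0}$.
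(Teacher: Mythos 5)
Your proposal is correct and matches the paper's own treatment: the corollary is obtained there exactly as an immediate consequence of Theorem \ref{theo 5}, which makes $(\cymfs^{>0},[\cdot,\cdot]_{\ast})$ a Rankin-Cohen subalgebra of the standard Rankin-Cohen algebra $(\cyqmfs^{>0},[\cdot,\cdot]_{\rcdo,\ast})$ furnished by Remark \ref{rem srca} and Theorem \ref{theo 4}. Your second route, reading off from the proof of Theorem \ref{theo 5} that $[\cdot,\cdot]_{\ast}=[\cdot,\cdot]_{\partial,\Lambda,\ast}$ so that the structure is in fact canonical via Proposition \ref{prop crca}, is likewise exactly the argument embedded in that proof, so both of your routes coincide with the paper's reasoning.
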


\subsection{Examples of Rankin-Cohen brackets of \cy modular forms}
We know that the modular discriminant is given by $\Delta=\frac{1}{1728}(E_4^3-E_6^2)$, which is related with the discriminant $t_2^3-27t_3^2$ of the family of elliptic curves stated in \eqref{eq ufec}. One can easily compute (or find in \cite{zag94}) the following examples of Rankin-Cohen brackets \eqref{eq rcb} of modular forms:
\begin{align}\label{eq ex rcb mf}
   & [E_4,E_6]_1=-3456 \Delta\,, \ \ [E_4,E_6]_2=0 \,, \ \ [E_4,E_4]_2=4800\Delta, \\
   & [E_6,E_6]_2=-21168E_4 \Delta\,, \ \  [\Delta,\Delta]_2=-13E_4\Delta^2\,.\nonumber
\end{align}
Note that for any (quasi-)modular form or any \cy (quasi-)modular form $f$ of non-negative weight $r$ and any integer $k\geq 0$ it is evident by definition that :
\begin{equation}\label{eq [ff]odd}
  [f,f]_{2k+1}=0\ \ {\rm or} \ \ [f,f]_{\rcdo,2k+1}=0 \,.
\end{equation}

For any positive integer $n$, the discriminant of the Dwork family \eqref{eq DF tt} is given by the polynomial $t_{n+2}(t_1^{n+2}-t_{n+2})$. Hence, in the rest of this section for any $n$ we fix the notation $\Delta:=\t_{n+2}(\t_1^{n+2}-\t_{n+2})$. Next, we compute a few examples of Rankin-Cohen brackets \eqref{eq rcb r1} of \cy modular forms for $n=1,2,3,4$, which are motivated by examples given in \eqref{eq ex rcb mf}.
\begin{description}
  \item[$\bullet \ n=1$.] In this case we found $\t_1,\t_2,\t_3$ in the first list of \eqref{eq solution R01} and we have $\Delta=\t_3(\t_1^3-\t_3)$. The Rankin-Cohen brackets are calculated as follows:
      \begin{align}\label{eq ex rcb cymf1}
      & [\t_1,\t_3]_{\rcdo,1}=27\Delta\,, \ \ [\t_1,\t_3]_{\rcdo,2}=729\t_1^2\Delta \,, \ \ [\t_1,\t_1]_{\rcdo,2}=324\Delta, \\
      & [\t_3,\t_3]_{\rcdo,2}=-2916\t_1\Delta\,, \ \  [\Delta,\Delta]_{\rcdo,2}=-5103\t_1^4\Delta^2\,.\nonumber
      \end{align}
      Before passing to the next case, we express the combinations of $\t_1,\t_2,\t_3$ which appeared in the right hand side of the above relations in terms of eta and theta functions that seem to us interesting. These relations are obtained thanks to \cite{Oeis} and one can find out more about them by seeing the corresponding pages and references given there.  By comparing the coefficients of $\t_1$ with \cite[A004016]{Oeis} we  find:
      \begin{equation}\label{eq t1 2}
        \t_1=\frac{1}{3}(\theta_3(q)\theta_3(q^3) + \theta_2(q)\theta_2(q^3)),
      \end{equation}
      and for $\t_1^2$ and $\t_1^4$ the reader is referred to \cite[A008653]{Oeis} and  \cite[A008655]{Oeis}, respectively. After computing the $q$-expansion of $\Delta$, from \cite[A007332]{Oeis} we get:
      \begin{equation}\label{eq Delta1}
        \Delta=\frac{1}{27}\eta^6(q)\eta^6(q^3)\,,
      \end{equation}
      and on account of \cite[A136747]{Oeis} we get:
      \begin{equation}\label{eq t1Delta1}
       \t_1^2 \Delta=\frac{1}{243}\eta^6(q)\eta^4(q^3)\left(\eta^3(q) + 9 \eta^3(q^9)\right)^2.
      \end{equation}
      The equations \eqref{eq t1 2}, \eqref{eq Delta1} and \eqref{eq t1Delta1} yield:
      \begin{equation}\label{eq t1 3}
        3\t_1=\theta_3(q)\theta_3(q^3) + \theta_2(q)\theta_2(q^3)=\frac{\eta^3(q) + 9 \eta^3(q^9)}{\eta(q^3)}.
      \end{equation}

  \item[$\bullet \ n=2$.] Here $\t_1,\t_2,\t_4$ are stated in the second list of \eqref{eq solution R01}. We know that $\Delta=\t_4(\t_1^4-\t_4)$, and we obtain:
      \begin{align}\label{eq ex rcb cymf2}
      & [\t_1,\t_4]_{\rcdo,1}=-8\t_3\t_4\,, \ \ [\t_1,\t_4]_{\rcdo,2}=192\t_1^3\t_4 \,, \ \ [\t_1,\t_1]_{\rcdo,2}=36\t_1^4-9\t_3^2=36\t_4, \\
      & [\t_4,\t_4]_{\rcdo,2}=-576\t_1^2\t_4^2\,, \ \  [\Delta,\Delta]_{\rcdo,2}=-1088\t_1^2\t_4(\t_1^4+8\t_4)\Delta\,.\nonumber
      \end{align}
      Note that in the third bracket of \eqref{eq ex rcb cymf2} we used the fact that $\t_3^2=4(\t_1^4-\t_4)$, which also implies:
      \begin{equation}\label{eq [t1t4]2}
        [\t_1,\t_4]_{\rcdo,1}^2=64\t_3^2\t_4^2=256\t_4\Delta\,.
      \end{equation}
  \item[$\bullet \ n=3$.] If $\rcv$ is the vector field given in the {\bf Option 1}, then one can find the $q$-expansion of $\t_1,\t_2,\ldots,\t_7$ in \cite{ho22}. For $\rcv$ given in {\bf Option 2} we can not find $q$-expansion of solution components of $\rcv$ around $\infty$, because if we suppose that $\dot t_j=aq\frac{\partial t_j}{\partial q}$, for a constant $a\in \C$, then we find $a=0$. In this case we have $\Delta=\t_5(\t_1^5-\t_5)$, and we calculate the Rankin-Cohen brackets, for $\rcdo$ induced by $\rcv$ given in both {\bf Option 1} and {\bf Option 2}, as follows:
      \begin{align}\label{eq ex rcb cymf3}
      & [\t_1,\t_5]_{\rcdo,1}=-5\t_3\t_5\,, \ \ [\t_1,\t_5]_{\rcdo,2}=\frac{-4\t_1\t_3^3\t_4\t_5+3\t_3^3\t_5\t_6}{125(\t_1^5-\t_5)} \,, \\
      & [\t_1,\t_1]_{\rcdo,2}=\frac{-2500\t_3^2(\t_1^5-\t_5)-2\t_1\t_3^3(\t_1\t_4-\t_6)}{625(\t_1^5-\t_5)}\,, \ \  [\t_5,\t_5]_{\rcdo,2}=\frac{-6\t_3^3\t_4\t_5^2}{25(\t_1^5-\t_5)}\,,\nonumber\\
      & [\Delta,\Delta]_{\rcdo,2}=\frac{\t_3^2\t_5^2}{25}(\t_1^3(-20625\t_1^5-55000\t_5+22\t_1\t_3\t_6)-44\t_3\t_4(\t_1^5-\t_5)) \,.\nonumber
      \end{align}
  \item[$\bullet \ n=4$.] Here, the first $7$ coefficients of the $q$-expansions of $\t_1,\t_2,\ldots,\t_7,\t_8$ are given in \cite[Table 2]{movnik}. We get $\Delta=\t_6(\t_1^6-\t_6)$ and hence:
      \begin{align}\label{eq ex rcb cymf4}
      & [\t_1,\t_6]_{\rcdo,1}=-6\t_3\t_6\,, \ \ [\t_1,\t_6]_{\rcdo,2}=\frac{-9\t_1\t_3^2\t_4\t_6\t_8+7\t_3^2\t_5\t_6\t_8}{12(\t_1^6-\t_6)} \,, \\
      & [\t_1,\t_1]_{\rcdo,2}=\frac{-72\t_3^2(\t_1^6-\t_6)-\t_1\t_3^2\t_8(\t_1\t_4-\t_5)}{18(\t_1^6-\t_6)}\,, \ \  [\t_6,\t_6]_{\rcdo,2}=\frac{-7\t_3^2\t_4\t_6^2\t_8}{\t_1^6-\t_6}\,,\nonumber\\
      & [\Delta,\Delta]_{\rcdo,2}=\t_3^2\t_6^2(\t_1^4(-1404\t_1^6-4680\t_6+26\t_1\t_5\t_8)-52\t_4\t_8(\t_1^6-\t_6)) \,.\nonumber
      \end{align}
\end{description}

The relations given in \eqref{eq t1dtn+2d} yield $\rcdo \t_1=\t_3-\t_1\t_2$  and $\rcdo \t_{n+2}=-(n+2)\t_2\t_{n+2}$ for any integer $n\geq 3$, from which we conclude the following expected result (see \eqref{eq ex rcb cymf3} and \eqref{eq ex rcb cymf4}):
\begin{equation}\label{eq [t1tn+2]1}
  [\t_1,\t_{n+2}]_{\rcdo,1}=-(n+2)\t_3\t_{n+2}\,, \ \ \forall n\geq 3\,.
\end{equation}
Another interesting point that we observe in the above examples is that in all the cases $n=1,2,3,4$ the bracket $[\Delta,\Delta]_{\rcdo,2}$ is expressed as a polynomial in terms of $\t_1,\t_2,\ldots,\t_\dt$, and we expect that this happens for higher dimensions as well.

It is also worth to point out that for any \cy (quasi-)modular form $f$ of weight $r$, the second Rankin-Cohen bracket $[f,f]_{\rcdo,2}$ provides a second order differential equation which is satisfied by $f$. More precisely, from \eqref{eq rcb r1} we obtain:
\begin{equation}\label{eq [ff]2}
[f,f]_{\rcdo,2}=6f\rcdo^2f-9(\rcdo f)^2,
\end{equation}
which implies that $f$ satisfies the second order ODE:
\begin{equation}\label{eq 2nd order ode}
  6y\rcdo^2y-9(\rcdo y)^2=[f,f]_{\rcdo,2}\,.
\end{equation}
For example, if $n=1$, then from the third bracket of \eqref{eq ex rcb cymf1} we get that the function
\[
\t_1=\frac{1}{3}(2\theta_3(q^2)\theta_3(q^6)-\theta_3(-q^2)\theta_3(-q^6))=\frac{1}{3}(\theta_3(q)\theta_3(q^3) + \theta_2(q)\theta_2(q^3))=\frac{\eta^3(q) + 9 \eta^3(q^9)}{3\eta(q^3)},
\]
satisfies the following second order ODE:
\begin{equation}\label{eq 2nd order ode 1}
  2y\ddot{y}-3\dot{y}^2=4\eta^6(q)\eta^6(q^3)\,,
\end{equation}
in which $\dot{y}=3q\frac{\partial y}{\partial q}=\frac{3}{2\pi i}\frac{d y}{d \tau}$.

\section{Final remarks} \label{section FR}
One of weak points of Theorem \ref{theo 5} is that we are just considering the \cy modular forms of positive weight. If we look closely to the definition of $\cyqmfs$ and $\cymfs$ given in \eqref{eq cyqmfs} and \eqref{eq cymfs}, respectively, we observe that they contain non-constant elements of weight zero and elements of negative weight. For example for $n=3$, the element $\t_4\in \cymfs$ is a non-constant element of weight zero and $\frac{1}{\t_5(\t_1^5-\t_5)}\in \cymfs$ is an element of weight $-10$. Thus, in general it is not necessarily valid that $\cyqmfs_0=\cymfs_0=\C$; indeed, $\cyqmfs_0$ and $\cymfs_0$ are generated by $\C\cup \{\,f\in \cyqmfs\, | \, \deg(f)=0\,\}$ and $\C\cup \{\,f\in \cymfs\, | \, \deg(f)=0\,\}$, respectively.
We can consider the definition of the Rankin-Cohen bracket \eqref{eq rcb r1} for elements of negative weight as well, and hence
we can endow $\cyqmfs$ with a Rankin-Cohen algebra structure.  Using the computer we observed that the Rankin-Cohen brackets of all examined \cy modular forms of negative weight are again \cy modular forms, in the cases $n=1,2,3,4$, but we could not prove theoretically the assertion that the space of \cy modular forms $\cymfs$ is closed under the Rankin-Cohen bracket \eqref{eq rcb r1}. We believe to the truth of this assertion, but our main difficulty in carrying out its proof is the use of Proposition \ref{prop crca}, where the weight of non-constant elements of the graded algebra are considered positive. This led us to the following conjecture.

\begin{conj}\label{conj 1}
The proposition \ref{prop crca} holds if the graded algebra $M$, besides elements of positive weight, also contains elements of negative weight or non-constant elements of weight zero. In the other words, if $M=\bigoplus_{k\in \Z}M_k$, in which it is not necessary that $M_0=\k.1$.
\end{conj}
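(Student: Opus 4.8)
The plan is to show that positivity of the grading enters the proof of Proposition \ref{prop crca} only \emph{formally}, through coefficients that are polynomial in the weights, and can therefore be removed by a polynomial--identity argument run in a free model. Recall that Proposition \ref{prop crca} is proved by embedding $(M_\ast,[\cdot,\cdot]_{\partial,\Lambda,\ast})$ into the standard Rankin--Cohen algebra $(R_\ast,[\cdot,\cdot]_{D,\ast})$, where $R_\ast=M_\ast\otimes_\k\k[\lambda]$ and $D$ is given by \eqref{eq der zag}, using two facts: the standard brackets \eqref{eq srcb} satisfy every Rankin--Cohen identity (Remark \ref{rem srca}), and the combinatorial equality \eqref{eq []=[]} holds. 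Neither \eqref{eq srcb}, \eqref{eq crcb} nor the recursion \eqref{eq f_(i)} refers to the sign of the weights: the weights enter only through the binomial coefficients $\binom{k+r-1}{i}$, which are polynomials in $r$ (well defined for every $r\in\Z$, as recalled in Section \ref{section FR}), and through the scalars $j(j+r-1)$ in \eqref{eq f_(i)}. Thus the brackets are literally defined for $M_\ast=\bigoplus_{k\in\Z}M_k$, and closure of $M_\ast$ under $[\cdot,\cdot]_{\partial,\Lambda,\ast}$ is immediate from \eqref{eq f_(i)} as before. What must be proved is that the \emph{identities} survive; for this it suffices to extend Remark \ref{rem srca} and \eqref{eq []=[]} to an arbitrary $\Z$-grading.

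\textbf{Extending the standard case.} The core step is to prove that the standard brackets \eqref{eq srcb} satisfy every Rankin--Cohen identity on any $\Z$-graded commutative associative $\k$-algebra with a degree $2$ derivation. I would work in the universal model: given generators $x_1,\dots,x_N$ of formal weights $r_1,\dots,r_N$, let $\mathcal{U}=\k[r_1,\dots,r_N][\,\xi_i^{(a)}:1\le i\le N,\ a\ge 0\,]$ be the polynomial ring over $\k[r_1,\dots,r_N]$ in indeterminates $\xi_i^{(a)}$, equipped with the formal degree $2$ derivation $D(\xi_i^{(a)})=\xi_i^{(a+1)}$ and the weight $\deg(\xi_i^{(a)})=r_i+2a$. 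Expanding any iterated bracket of the $x_i$ through \eqref{eq srcb} produces an element $U=\sum_\beta Q_\beta(r_1,\dots,r_N)\,m_\beta\in\mathcal{U}$, where the $m_\beta$ are distinct monomials in the $\xi_i^{(a)}$ and each $Q_\beta$ is a polynomial in the weights. A Rankin--Cohen identity asserts $U=0$, and since the $m_\beta$ are linearly independent this means $Q_\beta\equiv 0$ for every $\beta$. To verify this I would specialise $r_i=a_i$ for arbitrary integers $a_i\ge 1$: then $\mathcal{U}|_{r=a}=\k[\xi_i^{(a)}]$ is graded by the weights $a_i+2a\ge 1$, so it is positively graded with $M_0=\k\cdot 1$ and with finite-dimensional graded pieces (each generator has weight $\ge 1$, so only finitely many monomials have a given weight); hence it is exactly a standard Rankin--Cohen algebra in the sense of Remark \ref{rem srca}. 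Consequently $U|_{r=a}=0$, and by linear independence $Q_\beta(a)=0$ for all $a\in(\Z^{\ge 1})^N$; since $(\Z^{\ge 1})^N$ is Zariski dense and $\mathrm{char}\,\k=0$, each $Q_\beta$ vanishes identically, so $U=0$ in $\mathcal{U}$. Specialising the $r_i$ to the (arbitrary integer) weights of actual elements $f_i$ of a $\Z$-graded algebra $R_\ast$ and mapping $\xi_i^{(a)}\mapsto D^a f_i$ then yields the identity in $R_\ast$. This extends Remark \ref{rem srca} to all $\Z$-gradings.

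\textbf{Transfer and conclusion.} With the standard case available for $\Z$-gradings I would form $R_\ast=M_\ast\otimes_\k\k[\lambda]$, still $\Z$-graded with $\deg\lambda=2$, and define $D$ by \eqref{eq der zag}; the previous paragraph then gives that $(R_\ast,[\cdot,\cdot]_{D,\ast})$ satisfies every Rankin--Cohen identity. The equality \eqref{eq []=[]}, $[f,g]_{D,k}=[f,g]_{\partial,\Lambda,k}$, is itself a combinatorial identity whose coefficients are polynomial in $r$ (via the binomials and the factors $j(j+r-1)$ of \eqref{eq f_(i)}); as it holds for all weights $\ge 1$ by Zagier's computation in \cite{zag94}, the same polynomial principle promotes it to all integer weights. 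Therefore $[\cdot,\cdot]_{\partial,\Lambda,\ast}$ coincides on $M_\ast$ with the restriction of $[\cdot,\cdot]_{D,\ast}$, and since $M_\ast$ is closed under $[\cdot,\cdot]_{\partial,\Lambda,\ast}$ (every $f_{(j)}$ stays in $M_\ast$ by \eqref{eq f_(i)}), it is a Rankin--Cohen subalgebra of $R_\ast$ and inherits all the identities. This would give the conjecture even when $M_\ast=\bigoplus_{k\in\Z}M_k$ contains elements of negative weight and $M_0\neq\k\cdot 1$.

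\textbf{Main obstacle.} The delicate point is the clause \emph{``and all the other algebraic identities satisfied by the Rankin--Cohen brackets''} in the definition of a Rankin--Cohen algebra: one must ensure the argument covers \emph{every} such identity, not only \eqref{eq ap1}--\eqref{eq ap11}. I would handle this by regarding the universal brackets \eqref{eq srcb} on $\mathcal{U}$ as defining the bracket structure itself, so that by construction any identity satisfied by the Rankin--Cohen brackets corresponds to an element $U\in\mathcal{U}$ of the above shape; the whole content then reduces to the single assertion that $U$ vanishes after specialisation to integer weights $\ge 1$, which is precisely the positively graded case of Remark \ref{rem srca}. The remaining care is bookkeeping: checking that the finite-dimensionality and $M_0=\k\cdot 1$ hypotheses of Zagier's definition are genuinely inert for the verification of a \emph{single} (hence finitary) identity, and confirming that Zariski density of $(\Z^{\ge 1})^N$ over a characteristic-zero field legitimises the multivariate vanishing step.
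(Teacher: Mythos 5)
You should know first that the paper contains no proof of this statement: it is stated as Conjecture \ref{conj 1} precisely because the author could not prove it. Section \ref{section FR} records the obstruction explicitly --- the proof of Proposition \ref{prop crca} assumes the weights of non-constant elements are positive, and ``the unsolved part is the equality \eqref{eq []=[]}''. So there is no hidden argument to compare yours against; your proposal has to be judged as an attempt on an open problem.

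Judged that way, your strategy attacks exactly the point the paper flags as unsolved, and as far as I can check it works. The two load-bearing observations are correct. First, for fixed $k$ both sides of \eqref{eq []=[]} expand, in a free differential model $\k[r,s][\phi^{(a)},\psi^{(b)},L^{(c)},\lambda]$ with $D\phi^{(a)}=\phi^{(a+1)}+(r+2a)\lambda\phi^{(a)}$, $D\psi^{(b)}=\psi^{(b+1)}+(s+2b)\lambda\psi^{(b)}$, $DL^{(c)}=L^{(c+1)}+(4+2c)\lambda L^{(c)}$, $D\lambda=L^{(0)}+\lambda^2$, together with the recursion \eqref{eq f_(i)}, into finitely many monomials independent of $(r,s)$ with coefficients polynomial in $(r,s)$. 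Second, every specialization $(r,s)=(a,b)\in(\Z^{\geq 1})^2$ of that model (with $\Lambda:=L^{(0)}$, $f:=\phi^{(0)}$, $g:=\psi^{(0)}$) is an honest positively graded algebra with $M_0=\k\cdot 1$ and finite-dimensional graded pieces, so Zagier's computation applies verbatim there, and Zariski density of $(\Z^{\geq 1})^N$ in characteristic zero forces the coefficient polynomials to agree identically. This promotes \eqref{eq []=[]} to arbitrary integer weights without ever re-running Zagier's factorial manipulations at weights where expressions such as $(r+j-1)!$ are meaningless --- which is presumably what blocked the author. The same scheme extends Remark \ref{rem srca} to $\Z$-gradings, the transfer step is then routine, and a genuine bonus is that your argument never needs $\dim_\k M_r<\infty$ for the target algebra, which matters here since e.g.\ for $n=3$ one has $\C[\t_4]\subseteq\cymfs_0$.

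Two points must be made explicit before this counts as a complete proof rather than a correct strategy. (i) The clause ``all the other algebraic identities satisfied by the Rankin--Cohen brackets'' is handled in your last paragraph by decreeing that an identity \emph{is} an element $U$ of the universal model, i.e.\ a formal identity; the positively graded vanishing you then invoke is exactly Remark \ref{rem srca}, which the paper itself imports from \cite{zag94} without proof. That is the intended reading, but it is a definition, and you are using Remark \ref{rem srca} as a black box: if ``identity'' instead meant any relation valid for actual modular forms, you would additionally need Zagier's implicit claim that all such relations are formal. You should also normalize identities to have coefficients polynomial, not merely rational, in the weights, since clearing a denominator that vanishes at some negative integer weight leaves a strictly weaker statement at that weight. (ii) The sentence ``as it holds for all weights $\geq 1$ by Zagier's computation'' needs the free-model instantiation above actually written down; without naming the model, the derivation and $\Lambda$, the polynomial principle has nothing to specialize. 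With these supplied, I see no gap: your argument would upgrade Conjecture \ref{conj 1} to a theorem and, with it, settle the closure of $\cymfs$ under the brackets \eqref{eq rcb r1} that the paper leaves open.
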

In the above conjecture by constant elements we mean the elements of the field $\k$. If we want to prove Conjecture \ref{conj 1} in an analogous way to the proof of Zagier given for \cite[Proposition 1]{zag94}, the unsolved part is the equality \eqref{eq []=[]}. Once we prove Conjecture \ref{conj 1}, we can prove that the space of \cy modular forms $\cymfs$ is closed under the Rankin-Cohen brackets \eqref{eq rcb r1}.

Since the \cy 3-folds are more important in the literature, we state the Gauss-Manin connection matrix of $\rcv$ for $n=3$ here:
\begin{equation}\label{eq GM_D_3}
\gm_\rcv=\left(
  \begin{array}{cccc}
    0 & 1 & 0 & 0 \\
    0 & 0 & \Yuk_1 & 0 \\
    t_2t_4 & 0 & 0 & -1 \\
    -t_2(t_2t_4+t_7) & t_2t_4 & 0 & 0 \\
  \end{array}
\right)\,,
\end{equation}
in which $\Yuk_1=\frac{ t_3^3}{5^4(t_1^5-t_5)}$. Note that, due to Theorem \ref{main3}, the Gauss-Manin connection matrix of $\Ra$ is as follows:
\begin{equation}\label{eq GM_R_3}
\gm_\Ra=\left(
  \begin{array}{cccc}
    0 & 1 & 0 & 0 \\
    0 & 0 & \Yuk_1 & 0 \\
    0 & 0 & 0 & -1 \\
    0 & 0 & 0 & 0 \\
  \end{array}
\right)\,.
\end{equation}
in which we also have $\Yuk_1=\frac{ t_3^3}{5^4(t_1^5-t_5)}$. If we substitute the solutions of $\Ra$ in $\Yuk_1$, then we get the Yukawa coupling.
It would be very interesting, and maybe helpful, if one can find out the (physical) interpretation of the non-zero part of the lower triangle of the matrix $\gm_\rcv$ stated in \eqref{eq GM_D_3}.


\def\cprime{$'$} \def\cprime{$'$} \def\cprime{$'$}





\end{document}